\newtheorem{thm}{Theorem}[section]
\newtheorem{cor}[thm]{Corollary}
\newtheorem{lem}[thm]{Lemma}
\newtheorem{prop}[thm]{Proposition}
\theoremstyle{definition}
\newtheorem{defn}[thm]{Definition}
\newtheorem{que}[thm]{Question}
\theoremstyle{remark}
\newtheorem{rem}[thm]{Remark}
\newtheorem{ex}[thm]{Example}
\numberwithin{equation}{section}
\newcommand{\Z}{\mathbf{Z}}
\newcommand{\N}{\mathbf{N}}
\newcommand{\R}{\mathbf{R}}
\newcommand{\K}{\mathbf{K}}
\newcommand{\C}{\mathbf{C}}
\newcommand{\KP}{\mathbf{KP}}
\newcommand{\CP}{\mathbf{CP}}
\newcommand{\Q}{\mathbf{Q}}
\newcommand{\dom}{\operatorname{dom}}
\newcommand{\ran}{\operatorname{ran}}
\newcommand{\vect}{\operatorname{span}}
\newcommand\polar[2]{\ensuremath{\|#1_{#2}\|}}
\newcommand\dual[2]{\ensuremath{\langle #1,#2\rangle}}
\newcommand\prep[1]{\ensuremath{{}^\circ #1}}
\newcommand\Banach{\mathcal{X}}
\newcommand\Op{\mathcal{T}}
\newcommand\Opf{\mathcal{T}_f}
\newcommand\Opff{\mathcal{T}_{f,<\infty}}
\newcommand\Opfn{\mathcal{T}_{f,n}}
\newcommand{\cD}{\mathcal{D}}
\newcommand{\cF}{\mathcal{F}}
\newcommand{\GL}{\textrm{GL}}
\newcommand{\U}{\textrm{U}}
\begin{document}
\title{A duality operators/Banach spaces}
\date{\DTMnow}
\author{Mikael de la Salle}
\begin{abstract}
  Given a set $B$ of operators between subspaces of $L_p$ spaces, we characterize the operators between subspaces of $L_p$ spaces that remain bounded on the $X$-valued $L_p$ space for every Banach space on which elements of the original class $B$ are bounded.

  This is a form of the bipolar theorem for a duality between the class of Banach spaces and the class of operators between subspaces of $L_p$ spaces, essentially introduced by Pisier. The methods we introduce allow us to recover also the other direction --characterizing the bipolar of a set of Banach spaces--, which had been obtained by Hernandez in 1983.
\end{abstract}
\maketitle 

\section{Introduction}

All the Banach spaces appearing in this paper will be assumed to be separable, and will be over the field $\K$ of real or complex numbers.


The \emph{local theory of Banach spaces} studies infinite dimensional Banach spaces through their finite-dimensional subspaces. For example it cannot distinguish between the (non linearly isomorphic if $p \neq 2$ \cite[Theorem XII.3.8]{Banach32}) spaces $L_p([0,1])$ and $\ell_p(\N)$, as they can both be written as the closure of an increasing sequence of subspaces isometric to $\ell_p(\{1,\dots,2^n\})$~: the subspace of $L_p([0,1])$ made functions that are constant on the intervals $(\frac{k}{2^n},\frac{k+1}{2^n}]$, and the subspace of $\ell_p(\N)$ of sequences that vanish oustide of $\{0,\dots,2^n-1\}$ respectively. 

The relevant notions in the local theory of Banach spaces are the properties of a Banach space that depend only on the collection of his finite dimensional subspaces and not on the way they are organized. Said differently, the properties that are inherited by finite representability. Such properties are called \emph{super-properties}. The central question is to understand whether one super-property implies another, see Section \ref{sec:motivation} for terminology, details and examples.

The main result is Theorem \ref{thm:bipolar_of_operator_explicit}, where a theoretical criterion is obtained for the implication of two super-properties which are moreover stable under $\ell_p$-direct sums, for some $1 \leq p < \infty$ which is fixed once and for all. A result by Hernandez \cite{MR703903,hernandezThesis} (Theorem \ref{thm:hernandez} below) can be reformulated as: a superproperty $P$ is stable under $\ell_p$-direct sums if and only if it defined by $p$-homogeneous inequalities, \emph{i.e.} if and only if there is an operator $T$ between subspaces 
$\dom(T)$ and $\ran(T)$ of $L_p$ spaces $L_p(\Omega_1,m_1)$ and $L_p(\Omega_2,m_2)$ such that $X$ satisfies $P$ if and only if for every $n$, every $f_1,\dots,f_n$ in the domain of $T$ and every $x_1,\dots,x_n \in X$, \[ \int_{\Omega_2} \| \sum_i (Tf_i)(\omega_2) x_i\|^p dm_2(\omega_2) \leq \int_{\Omega_1} \| \sum_i f_i(\omega_1) x_i\|^p dm_1(\omega_1).\] If one denotes by $\polar{T}{X}$ the (possibly infinite) norm of $T \otimes \mathrm{id}_X$ between the subspaces $\dom(T) \otimes X$ and $\ran(T) \otimes X$ of $L_p(\Omega_i,m_i;X)$, then this condition can be shortly written as $\|T_X\|\leq 1$. So our result characterizes, for two operators $S$ and $T$ between subspaces of $L_p$ spaces, when $\|T_X\|\leq 1$ implies $\|S_X\| \leq 1$.

This is a form of the bipolar theorem for a duality between the set $\Banach$ of complex separable Banach spaces up to isometry and the set $\Op$ linear operators between subspaces of $L_p$ spaces defined by the assignement $(T,X) \mapsto \polar{T}{X}$. Indeed, adapting the standard terminology for locally convex topological vector spaces (see \cite[II \S 6]{MR633754}), we define~:

\begin{defn}\label{defn:polar} If $A \subset \Banach$ is a class of Banach spaces, then its \emph{polar} $A^\circ$ is the class of operators $T \in \Op$ such that $\polar T X \leq 1$ for every $X$ in $A$.
\end{defn}

\begin{defn}\label{defn:prepolar} If $B\subset \Op$, then its polar $\prep{B}$ is the class of Banach spaces $X \in \Banach$ such that $\polar T X \leq 1$ for every $T$ in $B$.
\end{defn}

This duality is a variant of the one considered in \cite{MR2732331}, where Pisier restricts to operators between $L_p$ spaces (and not subspaces of $L_p$ spaces). If one is interested in the bipolar of a class of Banach spaces, the two dualities are very different. On the other had, a description of the bipolar for a class of operators for Pisier's duality can be obtained from our result, see Section \ref{section:comparison} for details.

In a locally convex topological space, the bipolar theorem (\cite[II \S 6]{MR633754}) states that the bipolar of a set $C$ is equal to the closed convex hull of $C \cup \{0\}$. The inclusion of the closed convex hull of $C \cup \{0\}$ in the bipolar of $C$ is obvious; the content of the theorem is the other inclusion, which follows from the Hahn-Banach theorem. The aim of this paper is to state and prove a version of the bipolar theorem in this setting, for the correct definition of ``closed convex hull''. For the bipolar of a class of Banach spaces, this is due to Hernandez. The methods we introduce allow us to give a new proof of it (see Section~\ref{section:comparison} for the duality involving operators between $L_p$ spaces).
\begin{thm}\label{thm:hernandez}(\cite{MR703903}) The bipolar $\prep{(A^\circ)}$ of a class of Banach spaces $A \subset \Banach$ is the class of Banach spaces finitely representable in the class of all finite $\ell_p$-direct sums of elements in $A$.
\end{thm}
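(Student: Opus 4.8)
The plan is to prove the two inclusions of the bipolar theorem separately, the first soft and the second by a Hahn--Banach separation. Write $\overline{A}$ for the class of separable Banach spaces finitely representable in finite $\ell_p$-direct sums of elements of $A$. For the inclusion $\overline{A}\subseteq\prep{(A^\circ)}$ I would use only two facts. First, for $Y_1,\dots,Y_m\in A$ the Fubini isometry $L_p\big(\Omega;(Y_1\oplus\cdots\oplus Y_m)_p\big)\cong\big(L_p(\Omega;Y_1)\oplus\cdots\oplus L_p(\Omega;Y_m)\big)_p$ turns $T\otimes\mathrm{id}$ into a diagonal map, whence $\polar{T}{(Y_1\oplus\cdots\oplus Y_m)_p}\le\max_j\polar{T}{Y_j}$; so $A^\circ$ is automatically stable under finite $\ell_p$-direct sums. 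Second, $\polar{T}{Z}=\sup\{\polar{T}{E}:E\le Z\ \text{finite-dimensional}\}$ and $\polar{T}{E}\le d(E,F)\,\polar{T}{F}$ for finite-dimensional $E,F$, so $\polar{\cdot}{\cdot}$ is non-increasing under finite representability. Together these give $\polar{T}{X}\le 1$ for all $X\in\overline{A}$ and $T\in A^\circ$.

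For the reverse inclusion I would argue by contraposition. If $X\notin\overline{A}$ there is a finite-dimensional $E\le X$, of dimension $d$ say, that embeds into no finite $\ell_p$-direct sum of elements of $A$ with distortion close to $1$; it then suffices to produce one $T\in A^\circ$ with $\polar{T}{E}>1$, since then $\polar{T}{X}\ge\polar{T}{E}>1$. This is done through a dictionary, which is the point to set up carefully. Fix a linear identification $E=(\K^d,q_E)$ with $q_E$ the norm. An operator $T$ carrying a marked $d$-tuple $f_1,\dots,f_d$ spanning its domain, together with the images $h_i=Tf_i$, is---as far as its pairing with $d$-dimensional Banach spaces goes---the same data as the pair of finite positive measures $\mu=(f_1,\dots,f_d)_*m_1$ and $\nu=(h_1,\dots,h_d)_*m_2$ on $\K^d$, since $\int_{\Omega_1}\|\sum_i f_i(\omega_1)z_i\|^p\,dm_1=\int_{\K^d}\|\sum_i v_iz_i\|^p\,d\mu(v)$ and likewise with $h_i$ and $\nu$; conversely any pair $(\mu,\nu)$ for which $\nu$ is $p$-dominated by $\mu$ on linear functionals gives back such an operator, namely the identity on coordinate functions from $L_p(\K^d,\mu)$ to $L_p(\K^d,\nu)$. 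Under this dictionary $T\in A^\circ$ translates into $\int\Phi\,d\nu\le\int\Phi\,d\mu$ for every $\Phi$ in the convex cone $\mathcal{C}_A$ of $p$-homogeneous continuous functions on $\K^d$ generated by $\{v\mapsto\|u(v)\|_Y^p:Y\in A,\ u\colon\K^d\to Y\ \text{linear}\}$ (which already forces the $p$-domination on functionals once $A$ contains a nonzero space), while $\polar{T}{E}>1$ translates into $\int q_E^p\,d\nu>\int q_E^p\,d\mu$.

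Everything thus reduces to finding a signed measure $\sigma=\nu-\mu$ on $\K^d$ with $\int\Phi\,d\sigma\le 0$ for all $\Phi\in\mathcal{C}_A$ but $\int q_E^p\,d\sigma>0$. Restricting to the Euclidean unit sphere $K=S^{d-1}$---legitimate since all the functions involved are $p$-homogeneous---this is exactly the separation in $C(K)$ of the point $q_E^p$ from the closed convex cone $\overline{\mathcal{C}_A}$, and the bipolar theorem in the duality $C(K)$--$M(K)$ furnishes such a $\sigma$ as soon as $q_E^p\notin\overline{\mathcal{C}_A}$. But $q_E^p\in\overline{\mathcal{C}_A}$ would mean $q_E^p$ is a uniform limit of finite sums $\sum_j\|u_j(\cdot)\|_{Y_j}^p$ with $Y_j\in A$; since $q_E^p$ is bounded below on $K$, such an approximation promotes to a $(1+\varepsilon)$-embedding $e\mapsto(u_je)_j$ of $E$ into $\big(\bigoplus_jY_j\big)_p$, contradicting the choice of $E$. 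Hence $q_E^p\notin\overline{\mathcal{C}_A}$, the measure $\sigma$ exists, and $T$ with it.

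I do not expect any single hard step, but rather that the work is in getting the dictionary exactly right: checking that restricting $\polar{T}{Y}\le1$ to $d$-tuples costs nothing (it does not, once the marked tuple spans the domain), carrying out the reduction to the sphere, and dispatching the nondegeneracy point that $\int q_E^p\,d\mu>0$, so that the ratio defining $\polar{T}{E}$ really exceeds $1$. Conceptually the theorem then says that, after this translation, the operator side of the duality is just the ordinary $C(K)$--$M(K)$ duality, so that the analog of the ``closed convex hull of $A$'' is computed by one application of Hahn--Banach; the only genuine matching to verify is that uniform closeness of the relevant $p$-homogeneous functions is the same as small Banach--Mazur distortion of $E$ into an $\ell_p$-sum.
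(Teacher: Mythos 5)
Your proposal is correct and follows essentially the same route as the paper: encode $p$-homogeneous norm functions as a convex cone in $C(S^{d-1})$ (the paper uses $C(\KP^{n-1})$, which is the same thing for $p$-homogeneous functions), encode operators with marked tuples as measures, apply Hahn--Banach separation in the $C(K)$--$M(K)$ duality, and translate back, with the key technical matching being that uniform closeness of the homogeneous functions equals small Banach--Mazur distortion into an $\ell_p$-sum (the paper's Lemma~\ref{lem:understandN}). The paper packages this as an explicit bipolar identity $N(\prep{A}^\circ,n)=\overline{\mathrm{conv}}\,N(A,n)$ via Proposition~\ref{prop:polarity} and Corollary~\ref{cor:bipolars} rather than arguing by contraposition, but the content is the same.
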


There is also an isomorphic version of the previous result.
\begin{thm}\label{thm:hernandez_isom}(\cite{MR703903}) Let $A \subset \Banach$ and $X \in \Banach$. The following are equivalent:
  \begin{itemize}
  \item $\|T_X\| <\infty$ for every $T \in A^\circ$.
    \item $X$ is isomorphic to a space finitely representable in the class of finite $\ell_p$ direct sums of spaces in $A$, \emph{i.e.} to a space in $\prep{A}^\circ$.
  \end{itemize}

  In that case, the Banach-Mazur distance from $X$ to a space in $\prep{A}^\circ$ is equal to $\sup_{T \in A^\circ} \|T_X\|$.
\end{thm}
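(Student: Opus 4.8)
The plan is to prove the single quantitative identity
\[
\inf\bigl\{\, d(X,Y)\ :\ Y\in\prep{(A^\circ)} \,\bigr\}\ =\ \sup_{T\in A^\circ}\polar{T}{X},
\]
the infimum being attained; together with Theorem~\ref{thm:hernandez} this yields both the stated equivalence and the displayed distance formula. The inequality $\sup_{T\in A^\circ}\polar{T}{X}\le\inf_Y d(X,Y)$ is soft: if $Y\in\prep{(A^\circ)}$ and $u\colon X\to Y$ is an isomorphism, then on the relevant subspaces $T\otimes\mathrm{id}_X=(\mathrm{id}\otimes u^{-1})(T\otimes\mathrm{id}_Y)(\mathrm{id}\otimes u)$, so $\polar{T}{X}\le\|u^{-1}\|\,\polar{T}{Y}\,\|u\|\le\|u\|\,\|u^{-1}\|$ for every $T\in A^\circ$; take suprema over $T$, then infima over $Y$ and $u$. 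In particular the second bullet implies the first.

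For the converse, set $c=\sup_{T\in A^\circ}\polar{T}{X}$ and assume $c<\infty$. Since $\polar{T}{X}=\sup\{\polar{T}{E}:E\subseteq X\text{ finite dimensional}\}$, one has $\sup_{T\in A^\circ}\polar{T}{E}\le c$ for every finite dimensional $E\subseteq X$, and I would reduce to the finite dimensional statement $d\bigl(E,\prep{(A^\circ)}\bigr)\le\sup_{T\in A^\circ}\polar{T}{E}$ by an ultrafilter construction: for each pair $(E,m)$ with $E\subseteq X$ finite dimensional and $m\in\N$ choose, using that statement, a space $Y_{E,m}\in\prep{(A^\circ)}$ and an isomorphism $u_{E,m}\colon E\to Y_{E,m}$ with $\|u_{E,m}\|\le1$ and $\|u_{E,m}^{-1}\|\le c+\frac1m$; along an ultrafilter $\mathcal U$ on these pairs refining $\{(E,m):E\supseteq E_0\}$ for all $E_0$ and $\{(E,m):m\ge m_0\}$ for all $m_0$, the assignment $x\mapsto(u_{E,m}x)_{(E,m)}/\mathcal U$ (with $0$ entries where undefined) embeds $X$ into $\prod_{(E,m)}Y_{E,m}/\mathcal U$ with distortion at most $c$. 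That ultraproduct is finitely representable in the finite $\ell_p$-direct sums of elements of $A$, hence so is the (separable) image, which therefore lies in $\prep{(A^\circ)}$ by Theorem~\ref{thm:hernandez}; this also gives attainment.

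It remains to prove, for $E$ finite dimensional with $c_E:=\sup_{T\in A^\circ}\polar{T}{E}$, that $d\bigl(E,\prep{(A^\circ)}\bigr)\le c_E$ — the heart of the matter. By Theorem~\ref{thm:hernandez}, a norm $q$ on $E$ has $(E,q)\in\prep{(A^\circ)}$ iff $\polar{T}{(E,q)}\le1$ for all $T\in A^\circ$, so it suffices to find $q$ with $c_E^{-1}\|\cdot\|_E\le q\le\|\cdot\|_E$ and $(E,q)\in\prep{(A^\circ)}$; the identity map then gives $d(E,(E,q))\le c_E$. I would produce such a $q$ by a separation argument. The set $K$ of norms $q\le\|\cdot\|_E$ on $E$ with $(E,q)\in\prep{(A^\circ)}$ is nonempty, compact, and — because $\prep{(A^\circ)}$ is stable under finite $\ell_p$-direct sums — closed under the combinations $(\sum_i\theta_iq_i^{\,p})^{1/p}$ ($\theta_i\ge0$, $\sum_i\theta_i=1$, $q_i\in K$), so $\{q(\cdot)^p:q\in K\}$ is a compact convex subset of the $p$-homogeneous continuous functions on $E$ (identified with $C(S_E)$ by restriction to the unit sphere $S_E$). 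If no $q\in K$ dominated $c_E^{-1}\|\cdot\|_E$, Hahn--Banach would dually supply finitely supported positive measures $\rho_1,\rho_2$ on $E$ with $\int q^p\,d\rho_2\le\int q^p\,d\rho_1$ for every seminorm $q$ on $E$ whose quotient $E/\ker q$ is finitely representable in $\ell_p$-direct sums of $A$, but with $\int\|\cdot\|_E^p\,d\rho_2>c_E^{\,p}\int\|\cdot\|_E^p\,d\rho_1$. After a routine normalization, $(\rho_1,\rho_2)$ is exactly the data of an operator $T$ between finite dimensional subspaces of $L_p$ spaces — take the $L_p$ spaces of the supports of $\rho_1,\rho_2$ as finite atomic measure spaces and the coordinate functions of a fixed basis of $E$ — for which
\[
\polar{T}{Y}=\sup_{v\colon E\to Y}\Bigl(\tfrac{\int\|v(\cdot)\|_Y^p\,d\rho_2}{\int\|v(\cdot)\|_Y^p\,d\rho_1}\Bigr)^{1/p}.
\]
The first property gives $\polar{T}{Y}\le1$ for all $Y\in\prep{(A^\circ)}$, i.e.\ $T\in(\prep{(A^\circ)})^\circ=A^\circ$, while the second ($v=\mathrm{id}$) gives $\polar{T}{E}>c_E$, contradicting the definition of $c_E$.

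The hard part — and where I expect the paper's methods to be essential — is to carry out this last separation with the constant $c_E$ exactly, rather than $c_E$ times a factor depending on $\dim E$: a crude separation loses such a factor, essentially because an $L_p$-operator inequality encodes an average and not the supremum that the Banach--Mazur distance involves, and the remedy is to exploit the $p$-convexity ($\ell_p$-stability) of $\prep{(A^\circ)}$ systematically — for instance by assembling the local dual witnesses into one renorming via an $L_p$-direct integral, which stays finitely representable in $\ell_p$-direct sums of $A$. This is, in effect, the separation underlying Theorem~\ref{thm:hernandez} run with $\sup_T\polar{T}{E}$ in place of $1$.
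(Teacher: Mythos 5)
Your overall scaffolding is right and matches the paper's: the soft direction by factoring $T\otimes\mathrm{id}_X$ through an isomorphism, reduction to finite--dimensional subspaces (your ultrafilter argument is in fact more explicit than what the paper writes, which simply stops once the finite--dimensional estimate is obtained), and the reformulation of the finite--dimensional statement as finding $\varphi\in\overline{\mathrm{conv}}\,N(A,n)$ with $\psi\le\varphi\le c_E^{\,p}\psi$, followed by a Hahn--Banach separation producing a pair of positive measures that encode an operator $T\in A^\circ$ with $\polar T E>c_E$.

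The gap is exactly where you flag it: the step ``Hahn--Banach would dually supply finitely supported positive measures $\rho_1,\rho_2$ with $\int q^p\,d\rho_2\le\int q^p\,d\rho_1$ for every admissible $q$ but $\int\psi\,d\rho_2>c_E^{\,p}\int\psi\,d\rho_1$'' is the entire content of the paper's Lemma~\ref{lem:polar_C(K)}, and a naive separation does not deliver it. If you separate the compact convex set $\{q^p:q\in K\}$ from the closed convex set $\{f:f\ge c_E^{-p}\psi\}$ in $C(S_E)$, you get a \emph{single} positive measure $\mu$ with $\int q^p\,d\mu<c_E^{-p}\int\psi\,d\mu$ for all $q\in K$; that is not of the form $\rho_1-\rho_2$ with $\rho_1-\rho_2\in N(\prep{(A^\circ)},n)^\circ$, and so it does not encode an element of $P(A^\circ,n)$, i.e.\ an operator. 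The paper's fix is to separate instead the target $\psi$ from the closed convex \emph{cone} $B=\{\psi':\exists\varphi\in\overline{\mathrm{conv}}\,N(A,n),\ \psi'\le\varphi\le s\psi'\}$ with $s=c_E^{\,p}$; the Hahn decomposition of the separating functional gives a pair $(\mu,\nu)$ with disjoint supports, and the crucial observation is that for $\varphi\in\overline{\mathrm{conv}}\,N(A,n)$ and any continuous $g\colon S_E\to[\tfrac1s,1]$ one has $g\varphi\in B$. Choosing $g_n\to\tfrac1s\,1_{\mathrm{supp}\,\nu}+1_{\mathrm{supp}\,\mu}$ in $L^1(\mu+\nu)$ and passing to the limit is what puts the factor $s$ exactly where it is needed (rather than losing a dimension-dependent constant), yielding $\dual{\mu'-\nu'}{\varphi}\ge0$ for all $\varphi$ and $\dual{s\mu'-\nu'}{\psi}<0$. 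Your write-up stops short of producing this cone and the limiting trick, and without them the separation does not give the constant $c_E$ or even the two-measure structure you assert; so the heart of the theorem remains unproved in your proposal.
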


Our main result is the bipolar theorem for sets of operators. To state it we have to introduce some definition.

\begin{defn}\label{defn:spatial_isometry} A \emph{spatial isometry} between finite dimensional subspaces of $L_p$ spaces is a composition of isometries of the form:
\begin{itemize}
\item (Change of density) Restriction to a subspace of $L_p(\Omega,m)$ of the multiplication by a nonvanishing measurable function $h \colon \Omega \to \K^*$, \emph{i.e.} $f \in L_p(\Omega,m)  \mapsto h f \in L_p(\Omega,|h|^{-p} m)$.



\item (Equimeasurability outside of $0$) Maps of the form $T \colon \dom(T) \subset  L_p(\Omega,m) \to L_p(\Omega',m')$ such that for every finite family $f_1,\dots,f_n \in \dom(T)$ and every Borel subset $E \subset \K^n \setminus \{0\}$, $m(\{x, (f_1(x),\dots,f_n(x)) \in E\}) = m'(\{x, (T f_1(x),\dots,T f_n(x)) \in E\})$.
\end{itemize}
\end{defn}
It is not hard to prove (see Lemma \ref{lem:when_muf=mug} and Remark \ref{rem:when_muf=mug}) that every spatial isometry is of the form $C_1 E C_2$ for $C_1,C_2$ changes of phase and measure and $E$ equimeasurable outside of $0$. 

It is important that we require $0 \notin E$, as we want for example that
$f \in L_p([0,1])\mapsto f \chi_{[0,1]} \in L_p([0,2])$ is a spatial isometry.

When $p$ is not an even integer, it is known that every isometry between (separable) subspaces of $L_p$ spaces is a spatial isometry (\cite{MR611233}). The idea developped in this article allows to recover this result, and to generalize it to arbitrary $p$~: a linear map $T$ is a spatial isometry if and only if it is a regular isometry, \emph{i.e.} $\polar T X=\polar{T^{-1}}{X}=1$ for all $X$ (see Remark \ref{rem:spatial_and_regular_isometries} and Corollary~\ref{cor:spatial_and_regular_isometries}).

We can now state the version of the bipolar theorem for sets of operators. 

\begin{thm}\label{thm:bipolar_of_operator_explicit} Let $B \subset \Op$ and $T\colon \dom(T) \subset L_p(\Omega_1,m_1) \to L_p(\Omega_2,m_2)$ be a linear map, and $f_1,f_2,\dots,$ be a sequence generating a dense subspace of $\dom(T)$. The following are equivalent~:
  \begin{itemize}
  \item For every $X \in Banach$, $\sup_{S \in B} \|S_X\|\leq 1 \implies \|T_X\|\leq 1$.
  \item For every $n$ and $\varepsilon>0$, there exist
    \begin{itemize}
    \item an operator $S=S_0 \oplus S_1\oplus \dots \oplus S_k$ with $S_0$ of regular norm $1$ and $S_1\dots,S_k \in B$,
    \item spatial isometries
      \begin{align*} U \colon & \dom(U) \subset L_p(\Omega_1\times[0,1])\oplus_p L_p([0,1]) \to \dom(S),\\
      V \colon &\dom(V) = S(\ran U) \to L_p(\Omega_2\times[0,1])\oplus_p L_p([0,1]),
      \end{align*}
      \item for every $i=1\dots,n$ there are $g_i \in L_p(\Omega_1\times[0,1])$, $g'_i \in L_p(\Omega_2\times[0,1])$ and $h_i \in L_p([0,1])$
    \end{itemize}
    such that $(g_i,h_i) \in \dom(U)$, $V \circ S \circ U(g_i,h_i) = (g'_i,h_i)$ and
    \begin{align*} \left(\int_{\Omega_1 \times [0,1]} |f_i(\omega) - g_i(\omega,s)|^p dm_1(\omega) ds\right)^{\frac 1 p} & \leq \varepsilon\\
      \left(\int_{\Omega_2 \times [0,1]} |(Tf_i)(\omega) - g'_i(\omega,s)|^p dm_2(\omega) ds\right)^{\frac 1 p} & \leq \varepsilon.
    \end{align*}
    \end{itemize}
\end{thm}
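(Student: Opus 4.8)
The plan is to treat the two implications of the equivalence separately. ``Second condition $\Rightarrow$ first condition'' is the easy half of a bipolar theorem (the analogue of ``the closed convex hull is contained in the bipolar''); the reverse implication is the Hahn--Banach half.

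\emph{From the factorization to the implication.} Suppose the second condition holds, let $X\in\Banach$ with $\sup_{S\in B}\|S_X\|\le1$, and fix $n$ and $\varepsilon>0$; take $S=S_0\oplus\dots\oplus S_k$, $U$, $V$, $g_i$, $g'_i$, $h_i$ as provided. For the $\ell_p$-direct sum one has $\|(S_0\oplus\dots\oplus S_k)_X\|=\max_{0\le j\le k}\|(S_j)_X\|$; here $\|(S_0)_X\|\le1$ because $S_0$ has regular norm $1$, and $\|(S_j)_X\|\le1$ for $j\ge1$ because $S_j\in B$ and $X\in\prep B$, so $\|S_X\|\le1$. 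Each spatial isometry is a regular isometry (Corollary~\ref{cor:spatial_and_regular_isometries}), hence $\|U_X\|=\|V_X\|=1$, and by submultiplicativity $\|(V\circ S\circ U)_X\|\le1$. Applying the contraction $(V\circ S\circ U)\otimes\mathrm{id}_X$ to $\sum_i(g_i,h_i)\otimes x_i$ for arbitrary $x_1,\dots,x_n\in X$, and using $(V\circ S\circ U)(g_i,h_i)=(g'_i,h_i)$, the $L_p([0,1];X)$-contributions of the $h_i$ coincide on the two sides and cancel, leaving
\[ \int_{\Omega_2\times[0,1]}\Big\|\sum_i g'_i(\omega,s)x_i\Big\|^p\,dm_2(\omega)\,ds\ \le\ \int_{\Omega_1\times[0,1]}\Big\|\sum_i g_i(\omega,s)x_i\Big\|^p\,dm_1(\omega)\,ds. \]
Taking $p$-th roots and using $\|f_i-g_i\|_{L_p}\le\varepsilon$, $\|Tf_i-g'_i\|_{L_p}\le\varepsilon$ with the triangle inequality in $L_p(\Omega_j\times[0,1];X)$ (and that $[0,1]$ has total mass $1$), we get $\big(\int_{\Omega_2}\|\sum_i(Tf_i)x_i\|^p dm_2\big)^{1/p}\le\big(\int_{\Omega_1}\|\sum_i f_i x_i\|^p dm_1\big)^{1/p}+2\varepsilon\sum_i\|x_i\|$; letting $\varepsilon\to0$, this holds for every $n$ and all $x_i\in X$, so $\|T_X\|\le1$ since the $f_i$ generate a dense subspace of $\dom(T)$ and $T$ is linear.

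\emph{From the implication to the factorization: reduction.} Assume the first condition and fix $n$; put $F_n=\vect(f_1,\dots,f_n)$ and
\[ M_n:=\sup\Big(\int_{\Omega_2}\Big\|\sum_{i=1}^n(Tf_i)(\omega)\,y_i\Big\|_Y^p\,dm_2(\omega)\Big)^{1/p}, \]
the supremum over all $Y\in\prep B$ and $y_1,\dots,y_n\in Y$ with $\int_{\Omega_1}\|\sum_i f_i(\omega)y_i\|_Y^p\,dm_1\le1$. If $M_n>1$, some admissible $(Y,(y_i))$ has numerator $>1$ and denominator $\le1$, hence $\|T_Y\|\ge\|(T|_{F_n})_Y\|>1$; but $Y\in\prep B$ means $\sup_{S\in B}\|S_Y\|\le1$, so the first condition forces $\|T_Y\|\le1$, a contradiction. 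Therefore $M_n\le1$ for every $n$, and it suffices to prove: \emph{for each $n$, $M_n\le1$ implies that the data of the second condition exists for every $\varepsilon>0$}. This statement involves only the finite configuration $f_1,\dots,f_n\mapsto Tf_1,\dots,Tf_n$ and the class $B$; the quantifier over all Banach spaces is gone.

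\emph{From the implication to the factorization: the core, and the main obstacle.} I would prove this last implication by a Hahn--Banach separation adapted to the duality $(T,X)\mapsto\|T_X\|$, in three steps. First, realize the supremum $M_n$ by one separable ``extremal'' space $X_n\in\prep B$ with distinguished vectors $x_1,\dots,x_n$ satisfying $\int_{\Omega_1}\|\sum_i f_i x_i\|^p dm_1\le1$ and $\int_{\Omega_2}\|\sum_i(Tf_i)x_i\|^p dm_2=M_n^p$; this uses that $\prep B$ is stable under finite representability and $\ell_p$-direct sums (the same mechanism underlying Theorem~\ref{thm:hernandez}), so that $X_n$ arises as a completion, or ultraproduct, of finite-dimensional spaces. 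Second, since $Y\in\prep B$ is \emph{defined} by the inequalities $\|S_Y\|\le1$ ($S\in B$), the norm of $X_n$ admits a description as an infimum over factorizations of finite configurations through finite $\ell_p$-sums of elements of $B$; feeding the configuration $\sum_i f_i x_i\mapsto\sum_i(Tf_i)x_i$ into this description and using $M_n\le1$ produces, for each $\varepsilon>0$, an approximate abstract factorization of $f_i\mapsto Tf_i$ through some $S_1\oplus\dots\oplus S_k$ with $S_j\in B$, together with a ``remainder'' $S_0$ compatible with every Banach space, hence of regular norm $\le1$ --- this $S_0$ plays the role of the element $0$ adjoined to $C$ in the classical bipolar theorem. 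Third, and this is the main obstacle, one must make the factorization \emph{spatial}: the abstract isometric identifications have to be upgraded to genuine spatial isometries $U,V$ between concrete subspaces of $L_p$ spaces. This is where the structure theory of spatial isometries (changes of density composed with equimeasurability outside $0$, Lemma~\ref{lem:when_muf=mug}), developed for the classification of $L_p$-isometries, comes in, and it explains the shape of the second condition: tensoring the source by $[0,1]$ supplies an atomless ambient $L_p$ space in which the relevant distributions can be matched, while the extra summand $\oplus_p L_p([0,1])$ with its reservoir functions $h_i$ --- carried unchanged through $S_0$, so that $V\circ S\circ U(g_i,h_i)=(g'_i,h_i)$ --- absorbs the part of the configuration not explained by $B$ and lets $U$ and $V$ be honest spatial isometries; the matching of $f_i$ and $Tf_i$ is then preserved only up to $\varepsilon$. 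The same circle of ideas should, specialized to the case $B=A^\circ$, reprove Theorem~\ref{thm:hernandez}.
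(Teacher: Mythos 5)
Your proof of the easy implication (second condition implies first) is correct and follows the paper's computation, with the inessential variant of appealing to Corollary~\ref{cor:spatial_and_regular_isometries} to get $\|U_X\|=\|V_X\|=1$ rather than noting directly that spatial isometries preserve the quantities $\|\sum_i f_i x_i\|_{L_p(X)}$ (which is the easy direction of Lemma~\ref{lem:when_muf=mug}).

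The reverse implication has a genuine gap. Your reduction to ``$M_n\le1$ implies the factorization'' is fine, and you have correctly located the use of Hahn--Banach and of the structure theory of $L_p$-isometries, but the plan that follows --- realize $M_n$ by an extremal $X_n\in\prep{B}$ and read a factorization off a ``description of its norm as an infimum'' --- is neither carried out nor what the paper does, and it is not clear it can be made to work without reinventing the paper's machinery. The paper works entirely inside $H_n\cong C(\KP^{n-1})$ and its dual of signed measures: Banach spaces are encoded as the cone $N(A,n)\subset H_n$ of $p$-th powers of norms, operators as the cone $P(B,n)\subset H_n^*$ of differences $\mu_f-\mu_{Tf}$, and the polarity of Definitions~\ref{defn:polar}--\ref{defn:prepolar} transports to the linear one (Proposition~\ref{prop:polarity}). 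The hypothesis $T\in(\prep{B})^\circ$ becomes $P(T,n)\subset\overline{\mathrm{conv}}\,P(B\cup REG,n)$. The key structural fact your sketch does not identify is that this closure can be taken in \emph{norm}, not merely weak-$*$: this is Proposition~\ref{prop:bipolar_with_compactness} applied with $A=N(\Banach,n)$, whose bounded parts are norm-compact by Lemma~\ref{lem:compactnessNAn} (Arzel\`a--Ascoli). Without that input one only knows the bipolar is the weak-$*$ closure, which a priori requires transfinitely many iterations of sequential closure (see the discussion after Corollary~\ref{cor:bipolar_of_operator}); the one-step factorization the theorem asserts would then be out of reach.

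Your third step --- ``make the factorization spatial'' --- is indeed where the remaining technical work lies, but you leave it as an announcement of the expected shape rather than an argument. In the paper this step is done by Lemma~\ref{lem:when_muf=mug} ($\mu_f=\mu_g$ iff a spatial isometry carries $f$ to $g$), Lemma~\ref{lem:when_mu-mu=mu-mu} (a Jordan-decomposition coupling producing the auxiliary tuples $h_i$ that make abstract identifications spatial), their $\varepsilon$-versions Lemma~\ref{lem:when_muf_and_mug_are_close} and Remark~\ref{rem:when_muf_and_mug_are_close} (which is exactly where the factor $\Omega\times[0,1]$ enters, supplying atomless room for the coupling), and Lemma~\ref{lem:norm_convergence}, which converts norm-closeness in $H_n^*$ into the approximate factorization through $\Lambda_{123}(B\cup REG)$; together with Lemma~\ref{lem:understandP} for the convex hull this yields the precise form $S_0\oplus S_1\oplus\cdots\oplus S_k$ with $S_0$ of regular norm $1$. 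Your proposal gestures at these phenomena but supplies none of these arguments.
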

It is instructing to work out explicitly a very simple case of this theorem, namely for the obvious implication $\max(\|S_X\|,\|T_X\|)\leq 1 \implies \|(T\circ S)_X\|\leq 1$. See Example~\ref{example:composition}.

In particular, we obtain the following characterization of the bipolar of a set of operators.
\begin{cor}\label{cor:bipolar_of_operator} The bipolar $(\prep{B})^\circ$ of a class $B \subset \Op$ is the smallest class $B'\subset \Op$ containing $B$ and satisfying the following properties~:
\begin{enumerate}[(i)]
\item\label{item:regular} $B'$ contains $\{T\in \Op, \sup_{X \in \Banach} \|T_X\| \leq 1\}$.
\item\label{item:ellpdirectsum} $B'$ is stable under finite $\ell_p$-direct sums.
\item\label{item:composing_by_spatial_isom} If $T \in B'$ and $U,V$ are spatial isometries then $U \circ T \circ V \in B'$.
\item\label{item:subtle} Let $T \in B'$ such that $T \colon \dom(T) \subset L_p(\Omega_1,m_1) \oplus L_p(\Omega,m) \to L_p(\Omega_1,m_1) \oplus L_p(\Omega,m)$ is of the form $(f,g) \mapsto (Sf,g)$ for some $S \in \Op$ with domain equal to the image of $\dom(T)$ by the first coordinate projection. Then $S \in B'$.
\item\label{item:limits} If $T \in \Op$ is an operator between subspaces of $L_p(\Omega,m)$ and $L_p(\Omega',m')$ and if, for every finite family $f_1,\dots,f_n$ in the domain of $T$ and every $\varepsilon>0$, there is $S \in B'$ with domain contained in $L_p(\Omega,m)$ and range contained in $L_p(\Omega',m')$ and elements $g_1,\dots,g_n \in \dom(S)$ such that $\|f_i - g_i\|\leq \varepsilon$ and $\| T f_i - S g_i \|\leq \varepsilon$, then $T \in B'$.
\end{enumerate}
\end{cor}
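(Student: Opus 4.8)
The plan is to prove the two inclusions separately, the first being elementary and the second a repackaging of Theorem~\ref{thm:bipolar_of_operator_explicit}. First one notes that a smallest class $B'$ as in the statement exists: the family of all classes in $\Op$ that contain $B$ and satisfy \ref{item:regular}--\ref{item:limits} is nonempty (it contains $\Op$) and closed under arbitrary intersections, because each of the five conditions is individually preserved under intersection; $B'$ is then the intersection of that family.

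For the inclusion of $B'$ in $(\prep{B})^\circ$ it suffices, by minimality of $B'$, to check that $(\prep{B})^\circ$ contains $B$ and has properties \ref{item:regular}--\ref{item:limits}. The inclusion $B\subseteq(\prep{B})^\circ$ and property \ref{item:regular} are immediate from Definitions~\ref{defn:polar} and~\ref{defn:prepolar}. For the others one uses three elementary facts about $\|\cdot_X\|$: (a) every spatial isometry $E$ satisfies $\|E_X\|=1$ for all $X$ — this follows directly from Definition~\ref{defn:spatial_isometry} (a change of density induces an isometry of $L_p(\cdot;X)$, and equimeasurability outside $0$ preserves $\int\|\sum_i f_i(\omega)x_i\|^p$, the set where $(f_i(\omega))_i=0$ contributing nothing), and may also be quoted from Corollary~\ref{cor:spatial_and_regular_isometries}; (b) $\|(S_0\oplus\dots\oplus S_k)_X\|=\max_j\|(S_j)_X\|$; (c) if $T$ has the shape $(f,g)\mapsto(Sf,g)$ then $\|S_X\|\le\|T_X\|$, by testing $T$ on vectors $(f,0)$. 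Combined with the definition of $(\prep{B})^\circ$, fact (a) gives \ref{item:composing_by_spatial_isom}, fact (b) gives \ref{item:ellpdirectsum}, fact (c) gives \ref{item:subtle}, and \ref{item:limits} follows from a routine $\liminf$ and triangle-inequality estimate. These are all straightforward.

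The heart of the matter is the inclusion of $(\prep{B})^\circ$ in $B'$, and this is where Theorem~\ref{thm:bipolar_of_operator_explicit} enters. Fix $T\in(\prep{B})^\circ$, between subspaces of $L_p(\Omega_1,m_1)$ and $L_p(\Omega_2,m_2)$, and a \emph{linearly independent} sequence $(f_j)$ spanning a dense subspace of $\dom(T)$. The maps $\iota_i\colon L_p(\Omega_i,m_i)\to L_p(\Omega_i\times[0,1])$, $\psi\mapsto\psi\otimes 1$, are spatial isometries and $T=\iota_2^{-1}\circ\hat T\circ\iota_1$ with $\hat T(\psi\otimes 1):=(T\psi)\otimes 1$; since the restriction of $\iota_2^{-1}$ to $\ran(\hat T)$ is again a spatial isometry, property \ref{item:composing_by_spatial_isom} reduces the task to proving $\hat T\in B'$, which we do via \ref{item:limits}. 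By definition of $(\prep{B})^\circ$ the first bullet of Theorem~\ref{thm:bipolar_of_operator_explicit} holds for $T$, hence so does the second. Given $n$ and $\varepsilon_0>0$, apply the theorem to $(f_j)$ with $\varepsilon>0$ small enough that \emph{any} vectors $g_1,\dots,g_n$ with $\|f_i\otimes 1-g_i\|\le\varepsilon$ are linearly independent (possible because the $f_i\otimes 1$ are, and the family is fixed and finite); this produces $S=S_0\oplus\dots\oplus S_k$ with $S_0$ of regular norm $1$ and $S_1,\dots,S_k\in B$, spatial isometries $U,V$, and vectors $g_i,g_i',h_i$ with $V\circ S\circ U(g_i,h_i)=(g_i',h_i)$ satisfying the two displayed $L_p$-estimates. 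Now $S_0\in B'$ by \ref{item:regular}, the $S_j\in B\subseteq B'$, so $S\in B'$ by \ref{item:ellpdirectsum}; then $V\circ S\circ U\in B'$ by \ref{item:composing_by_spatial_isom}, and so is its restriction $R_0$ to $\vect\{(g_i,h_i):i\le n\}$ (restriction is composition with an inclusion, which is a spatial isometry). Because the $g_i$ are linearly independent, $g_i\mapsto g_i'$ defines a linear map and $R_0$ has \emph{exactly} the form $(\phi,c)\mapsto(\,\cdot\,\phi,c)$ with the would-be operator defined on the first-coordinate projection of $\dom(R_0)$; realizing the two ambient spaces $L_p(\Omega_i\times[0,1])$ as the two $\ell_p$-summands of their direct sum and conjugating $R_0$ accordingly (using \ref{item:composing_by_spatial_isom}) turns it into an operator of the shape required by \ref{item:subtle} between subspaces of one common $L_p$ space; that property then outputs $S''\in B'$, and conjugating back via \ref{item:composing_by_spatial_isom} gives $S''\in B'$, an operator from $\vect\{g_i\}$ into $L_p(\Omega_2\times[0,1])$ with $S''g_i=g_i'$. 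Since $\|f_i\otimes 1-g_i\|\le\varepsilon\le\varepsilon_0$ and $\|\hat T(f_i\otimes 1)-S''g_i\|=\|(Tf_i)\otimes 1-g_i'\|\le\varepsilon_0$, the operator $S''$ witnesses the hypothesis of \ref{item:limits} for $\hat T$ on $f_1\otimes 1,\dots,f_n\otimes 1$; by linearity the hypothesis then holds for all finite families in $\vect\{f_j\otimes 1\}$, so \ref{item:limits} yields $\hat T\in B'$, whence $T\in B'$.

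The only genuinely delicate step is the application of property \ref{item:subtle}: nothing in the conclusion of Theorem~\ref{thm:bipolar_of_operator_explicit} forces the vectors $g_i$ to be linearly independent, yet one needs the operator $R_0$ to be \emph{literally} of the passive-second-coordinate shape, so one must take the generating sequence linearly independent and apply the theorem with $\varepsilon$ small relative to the fixed finite family. A minor loose end is the passage between $\dom(T)$ and the span of $(f_j)$, which causes no trouble since \ref{item:limits} and the theorem are phrased in terms of the generating sequence. Everything else is bookkeeping with spatial isometries and finite $\ell_p$-direct sums; it is worth recording at the outset that inclusions of subspaces, tensoring with the constant function $1$, and the coordinate embeddings into $\ell_p$-direct sums are all spatial isometries, and that the inverse of a spatial isometry, restricted to any subspace of its range, is again a spatial isometry.
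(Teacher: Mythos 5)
Your proof is correct and follows the route the paper intends: the easy inclusion is verified directly, and the substantive inclusion $(\prep{B})^\circ \subset B'$ is extracted from Theorem~\ref{thm:bipolar_of_operator_explicit} by running the operations \eqref{item:regular}--\eqref{item:limits} once and then applying \eqref{item:composing_by_spatial_isom} one last time to pass from $T\otimes\mathrm{id}_{L_p([0,1])}$ (on the constant-in-$s$ domain) back to $T$. The paper does not write out this deduction, merely calling the easy inclusion ``rather obvious'' and sketching, in the paragraph following the corollary, exactly the bookkeeping you spell out; so you have filled in details left to the reader rather than found a different argument.

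One small remark on the step you flag as delicate: forcing linear independence of the $g_i$ by shrinking $\varepsilon$ is a harmless but unnecessary precaution. The restriction $R_0$ of $V\circ S\circ U$ to $\vect\{(g_i,h_i)\}$ has norm $\le 1$ (each $S_j$ has $\|(S_j)_\K\|\le 1$, so $\|S\|\le 1$, and $U,V$ are isometries), it fixes the $L_p([0,1])$ coordinate, and after your conjugation it becomes an operator between subspaces of a single ambient $\ell_p$-direct sum. The unnamed lemma preceding Example~\ref{example:composition} then already guarantees that the first-coordinate operator required by \eqref{item:subtle} is well-defined, with no independence assumption on the $g_i$. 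Your workaround is of course valid too, and avoids invoking that lemma.
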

Note however that Theorem~\ref{thm:bipolar_of_operator_explicit} is a sense more precise than Corollary~\ref{cor:bipolar_of_operator}, as it almost says that to obtain the bipolar of $B$ from $B$, it is enough to apply the operations \eqref{item:regular}, \eqref{item:ellpdirectsum}, \eqref{item:composing_by_spatial_isom}, \eqref{item:subtle} and \eqref{item:limits} only once, and in that order. Almost because we obtain in this way all operators of the form $T \otimes \mathrm{id_{L_p([0,1])}}$ with domain $\{(\omega,s)\mapsto f(\omega)\mid f \in \dom(T)\}$ for $T \in (\prep{B})^\circ$, so one needs to apply one last time \eqref{item:composing_by_spatial_isom} to obtain all of $(\prep{B})^\circ$. This improvement is not minor. For a long time, the author was only able to prove Corollary~\ref{cor:bipolar_of_operator}, and actually expected that to construct $(\prep{B})^\circ$ out of $B$, it was necessary to iterate these operations (and in particular \eqref{item:subtle} and \eqref{item:limits}) a large number of times (even an arbitrarily large countable ordinal of times), and this ordinal number was a measurement of the difficulty of computing the bipolar of a $B$. This is closely related to the classical fact, essentially due to Banach, that, to obtain the weak-* closure of a convex subset in the dual of a separable Banach space, the number of times one needs to take limits of weak-* convergent sequences can be an arbitrary countable ordinal. That this is not the case will rely on a particularily strong form of the bipolar theorem (in the linear setting) for the weak-* topology that we prove in Proposition~\ref{prop:bipolar_with_compactness}. See the discussion in subsection \ref{subsec:polar_linear}.

As for the usual bipolar theorem, the main content of the theorem is the inclusion $\prep B^\circ  \subset B'$. The reverse inclusion is rather obvious because it is rather clear that $\prep B^\circ$ contains $B$ and satisfies all the properties (\ref{item:regular}-\ref{item:limits}). 

So one can reformulate the non-trivial part of Corollary~\ref{cor:bipolar_of_operator} as follows~: if $T \notin B'$, then there is a Banach space $X$ such that $X \in \prep{B}$ but $\|T_X\|>1$. Constructing Banach spaces with prescribed properties is a notoriously difficult problem in general. What saves us here is that we do not construct $X$ explicitly, but we let the Hahn-Banach theorem construct it for us. This is achieved by suitably encoding the class of Banach spaces in a locally convex topological vector space $H$ and the class of operators between subspaces of $L_p$ spaces in its dual $H^*$, in such a way that the polarity between $\Banach$ and $\Op$ corresponds to the usual polarity in topological vector spaces. So, once these two encodings are well understood, both Theorems \ref{thm:hernandez} and \ref{thm:bipolar_of_operator_explicit} are just an application of the bipolar theorem in $H$ and $H^*$. When $X$ is a Banach space of dimension $n$, $X$ will be encoded inside the real Banach space $C(\KP^{n-1})$ of real-valued continuous functions on the projective space of dimension $n-1$. Similarly an operator $T$ with a domain of dimension $n$ will be encoded inside the dual of $C(\KP^{n-1})$. The space $H$ evoked would then be the projective limit of a suitable system of the spaces $C(\KP^{n-1})$. But since the study of the polarity between $\Banach$ and $\Op$ readily reduces to finite dimensional Banach spaces and operators with finite dimensional domains, we prefer to work directly with $C(\KP^{n-1})$ and never even formally introduce $H$.

\subsection*{Notation}

To avoid any set-theoretical problem (of $\Op$ not being a set), all the measure spaces appearing here will be standard measure spaces taken in some fixed set containing $[0,1]$ with the Lebesgue measure and that is stable by taking equivalent measures, measurable subsets with restriction of the measure, and finite direct sums. By direct sum of a finite sequence $(\Omega_1,m_1),\dots,(\Omega_n,m_n)$ we mean the space $(\Omega_1\cup\dots\cup \Omega_n,m_1\oplus \dots \oplus m_n)$ where $\Omega_1\cup\dots\cup \Omega_n$ is the disjoint union and the measure is $A \mapsto \sum_i m_i(A \cap \Omega_i)$. None of the results depend on the choice.

The $\ell_p$-direct sum of a finite family $T_1,\dots,T_n$ of operators from $\dom(T_i) \subset L_p(\Omega_i,m_i)$ to $\ran(T_i) \subset L_p(\Omega'_i,m'_i)$ is the operator $T_1 \oplus \dots \oplus T_n$ from $\dom(T_1) \oplus \dots \oplus \dom(T_n) \subset L_p(\Omega_1\cup \dots \cup \Omega_n,m_1\oplus \dots \oplus m_n)$ to $\ran(T_1) \oplus \dots \ran(T_n) \subset L_p(\Omega'_1\cup \dots \cup \Omega'_n,m'_1\oplus \dots \oplus m'_n)$.

An operator $T \in \Op$ is called regular if $\polar{T}{X}<\infty$ for every Banach space $X$, or equivalently if $\polar{T}{\ell_\infty}<\infty$. In that case the quantity $\polar{T}{\ell_\infty} = \sup_X \polar T X$ is called the regular norm of $T$ and denoted $\|T\|_r$. We will denote by $REG$ the set of operators $T \in \Op$ such that $\|T\|_r \leq 1$.

\subsection*{Organization of the paper} The first section presents some necessary background and some motivation for studying this polarity. Section \ref{sec:preliminaries} contains various preliminaries, including basic reminders on measure theory and on the linear bipolar theorem, as well as one result on which the rest will rely: Proposition~\ref{prop:bipolar_with_compactness}. Section \ref{sec:Hn} contains the proof of the main theorem. It starts by defining the encoding of spaces and operators in a linear duality, and then studies this encoding. Section~\ref{section:comparison} contains a discussion of variants of the duality presented in the introduction. In an appendix we present a new proof and a generalization, in the context of Section \ref{sec:Hn}, of Hardin's theorem \cite{MR611233}. Hardin's theorem appears as a direct corollary of the study of the invariant subspaces for some families of representations of $\GL_n(\K)$ on $C(\KP^{n-1})$.

Some of the results have been announced in the report \emph{Group actions on Banach spaces and a duality spaces/operators} \cite[pp 2304--2307]{MR4148141}.

\section{Background and motivation}\label{sec:motivation}

\subsection{Reminders on Banach space geometry}
If $n$ is an integer, the set $Q(n)$ of all $n$-dimensional normed space up to isometry, equipped with the Banach-Mazur distance
\[d(E,F) = \inf\{ \|u\| \|u^{-1}\| \mid u \colon E \to F \textrm{ linear invertible}\},\]
becomes a compact metric space, \emph{the Banach-Mazur compactum}. Beware that it is not $d$ but $\log d$ which is a distance in the usual way ($d$ is submultiplicative $d(E,G) \leq d(E,F) d(F,G)$ rather subadditive, and two isometric spaces are at Banach-Mazur distance $1$), but following the tradition we still call $d$ the Banach-Mazur distance.

We say that a Banach space $X$ is finitely representable in another Banach space $Y$ if for every finite-dimensional space $E \subset X$ and every $\varepsilon>0$ there is a subspace $F \subset Y$ of same dimension as $E$ such that $d(E,F)\leq 1+\varepsilon$. In other words, if for every $n$, the closure in $Q(n)$ of the space of $n$-dimensional subspaces of $X$ is contained in the same closure but for $Y$. This is equivalent to $X$ being isometrically a subspace of an ultraproduct of $Y$ \cite{zbMATH03640303}. 

More generally, we say that a Banach space $X$ is finitely representable in a class $B$ of Banach spaces if for every finite-dimensional space $E \subset X$ and every $\varepsilon>0$ there is a subspace $F$ of a space in $B$ of same dimension as $E$ such that $d(E,F)\leq 1+ \varepsilon$. We can therefore define \emph{a class of Banach spaces up to finite representability} as a collection $A_n$ of closed subsets of $Q(n)$ such that for every $n>m$, every $m$-dimensional subspace of every $E \in A_n$ belongs to $A_m$. In this representation, finite representability corresponds to inclusion.

The $\ell_p$-direct sum of a finite family $X_1,\dots,X_n$ of Banach spaces is the space $X_1 \oplus X_2 \oplus \dots \oplus X_n$ for the norm $\|(x_1,\dots,x_n)\| = (\|x_1\|^p + \dots+\|x_n\|^p)^{\frac 1 p}$.

\subsection{Motivation}
Estimating $\|T_X\|$ in terms of the properties of $T$ and the geometric properties of $X$ is a central aspect in the geometry of Banach spaces. Most natural geometric classes of Banach spaces are characterized in terms of such quantities, and most celebrated results can be expressed in the form ``$T$ belongs to the bipolar of $B$'' for specific $T$ and $B \subset \Op$. We list a few historical important examples for illustration. See \cite[Section 4]{MR2732331} for other examples.
\begin{itemize}
\item Hilbert spaces are characterized by the parallelogram inequality, \emph{i.e.} the property $\|T_X\|\leq 1$ where $T\colon \ell_2^2 \to \ell_2^2$ has matrix $\frac{1}{\sqrt 2}\begin{pmatrix} 1 & 1 \\ -1 & 1 \end{pmatrix}$.
\item \cite{MR730072,MR727340} A Banach space has the UMD property (for Unconditional Martingale Differences) if and only if the Hilbert transform $H \colon L_2(\R) \to L_2(\R)$ satisfies $\|H_X\|<\infty$.
\item Let $(\Omega,\mu)$ be a probability space and $\varepsilon_i\colon \Omega \to \{-1,1\}$, $i \in \N$ be iid centered (Bernoulli) random variables. A Banach space $X$ has type $p$ if there is a constant $T_p$ such that
  \[ \| \sum \varepsilon_i x_i\|_{L_p(\Omega;X)} \leq T_p (\sum \|x_i\|^p)^{\frac 1 p} \]
  for every $x_i \in X$. Equivalently if $\|T_X\| \leq 1 $ where $T \colon \vect(\varepsilon_i) \subset L_p(\Omega) \to \ell_p(\N)$ is the linear map sending $\varepsilon_i$ to $\frac{1}{T_p}(1_{k=i})_{k \in \N}$.

\item A Banach space $X$ has cotype $p$ if there is a constant $C_p$ such that
  \[ \| \sum \varepsilon_i x_i\|_{L_p(\Omega;X)} \geq \frac{1}{C_p} (\sum \|x_i\|^p)^{\frac 1 p} \]
  for every $x_i \in X$. Equivalently if $\|S_X\| \leq$ where $S \colon \ell^p \to  L_p(\Omega)$ is the linear map sending $(a_i)_{i \in \N}$ to $\frac{1}{C_p}\sum_i a_i \varepsilon_i$.
\item A Banach space $X$ has type $>1$ if and only if it is K-convex \cite{MR647811}: $\|T_X\|<\infty$, where $T \in B(L_2(\{-1,1\}^\N))$ is the orthogonal projection on the space spanned by the coordinates $\varepsilon_i \colon \omega = (\omega_n)_{n \in \N} \mapsto \omega_i$.
\item  Denote by $d_n(X)$ the supremum over all $n$-dimensional subspaces $E$ of $X$ subspaces of the Banach-Mazur distance from $U$ to $\ell_2^n$. Then (this is due to Pisier but written in \cite{dLdlSCrelle}) up to a factor $2$, $d_n(X)$ is equal to $\sup \|T_X\|$, where the sup is taken over all $T \colon L_2 \to L_2$ of norm $1$ and rank $n$. We can therefore express the Milman-Wolfson Theorem \cite{MR0467255} as follows: a Banach space $X$ has type $p>1$ if and only if $\|T_X\| = o( \|T\| \mathrm{rk}(T)^{\frac 1 2})$ as $\mathrm{rk}(T) \to \infty$.
\end{itemize}

We now move to a more detailed discussion of two of the author's main motivations.
\subsection{Group representations on Banach spaces}
Another motivation comes from the study of representations of groups on Banach spaces. Let $G$ be a locally compact topological group with a fixed left Haar measure. We recall that every strong-operator-topology (SOT) continuous representation $\pi$ of $G$ on a Banach space $X$ extends to a representation of the convolution algebra $C_c(G)$ of compactly supported continuous functions on $G$ by setting $\pi(f) x = \int f(g) \pi(g) x dg$ for every $x \in X$.

For example, if $\lambda_p$ denotes the left-regular representation on $L_p(G)$ $\lambda_p(g) f = f(g^{-1} \cdot)$, then $\lambda_p(f)$ is the convolution operator $\xi \mapsto f \ast \xi$.

When $A$ is a class of Banach spaces, denote by $C_A(G)$ the completion of $C_c(G)$ for the norm
\[ \|f\|_{C_A(G)} =  \sup \|\pi(f)\|_{B(X)},\] where the supremum is over all SOT-continuous continuous isometric representations $\pi$ of $G$ on a space $X$ in $A$.

The following result, which generalizes the classical fact that, for amenable groups, the full and reduced $C^*$-algebras coincide, reduces the understanding the representation theory of $G$ on a Banach space $X$ to the understanding of $\|T_X\|$ for convolution operators $T$. This known fact has already appeared in several unpublished texts (for example in the author's habilitation thesis), but seems to be missing from the published literature.
\begin{prop}\label{prop:amenable} If $G$ is amenable and $\pi$ is an isometric representation of $G$ on a Banach space $X$, then for every $f \in C_c(G)$,
  \[ \|\pi(f)\|_{B(X)} \leq \| \lambda_p(f)_X\|.\]
In particular, if a class of Banach spaces $A$ has the property that $L_p(G;X) \in A$ for every $X \in A$, then
\[ \|f\|_{C_A(G)} = \sup_{X \in A} \| \lambda_p(f)_X\|.\]
\end{prop}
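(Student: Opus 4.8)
The plan is to prove the pointwise bound $\|\pi(f)\|_{B(X)}\le\polar{\lambda_p(f)}{X}$ directly, by testing the convolution operator against a Følner vector twisted by $\pi$, and then deduce the ``in particular'' equality by comparing with the regular representation on $L_p(G;X)$. Morally this is Fell's absorption principle combined with amenability: the fibrewise isometry $U$ of $L_p(G;X)$ given by $(U\eta)(g)=\pi(g)\eta(g)$ conjugates $g\mapsto\lambda_p(g)\otimes\mathrm{id}_X$ into the representation $\eta\mapsto[g\mapsto\pi(g_0)\eta(g_0^{-1}g)]$, so amenability makes $\pi$ weakly contained in $g\mapsto\lambda_p(g)\otimes\mathrm{id}_X$, which forces $\|\pi(f)\|_{B(X)}\le\|\lambda_p(f)\otimes\mathrm{id}_X\|=\polar{\lambda_p(f)}{X}$; the proof simply makes the almost invariant vectors explicit. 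Before anything, record the harmless identification: $\lambda_p(f)$ is convolution by $f$, hence everywhere defined and bounded on $L_p(G)$ by Young's inequality, so by the definition of $\polar{\cdot}{\cdot}$ from the introduction, $\polar{\lambda_p(f)}{X}$ is exactly the norm on $L_p(G;X)$ of $\eta\mapsto f\ast\eta$, where $(f\ast\eta)(g)=\int_G f(g_0)\,\eta(g_0^{-1}g)\,dg_0$.

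For the inequality, fix $x\in X$ with $\|x\|=1$. Since $G$ is amenable it admits, for every $\varepsilon>0$, a measurable set $F$ with $0<m(F)<\infty$ that is almost invariant under left translation by the compact set $\operatorname{supp}(f)$; equivalently, with $\xi:=m(F)^{-1/p}\mathbf 1_F\in L_p(G)$ one has $\|\xi\|_p=1$ and $\sup_{g_0\in\operatorname{supp}(f)}\|\lambda_p(g_0)\xi-\xi\|_p\le\varepsilon$ (the Følner/Reiter characterization of amenability of locally compact groups). Put $\eta\in L_p(G;X)$, $\eta(g)=\xi(g)\,\pi(g)^{-1}x$; since $\pi$ is isometric, $\|\eta\|_{L_p(G;X)}=\|\xi\|_p=1$, and using $\pi(g^{-1}g_0)=\pi(g)^{-1}\pi(g_0)$ one computes
\[
(f\ast\eta)(g)=\pi(g)^{-1}\int_G f(g_0)\,\xi(g_0^{-1}g)\,\pi(g_0)x\,dg_0 .
\]
Let $\zeta(g):=\xi(g)\,\pi(g)^{-1}\pi(f)x$, so that $\|\zeta\|_{L_p(G;X)}=\|\pi(f)x\|\,\|\xi\|_p=\|\pi(f)x\|$, again because $\pi$ is isometric. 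As $\pi(f)x=\int_G f(g_0)\,\pi(g_0)x\,dg_0$, at each $g$ we have $\|(f\ast\eta)(g)-\zeta(g)\|\le\int_G|f(g_0)|\,|\xi(g_0^{-1}g)-\xi(g)|\,dg_0$, whence by Minkowski's integral inequality and the identity $\xi(g_0^{-1}g)=(\lambda_p(g_0)\xi)(g)$,
\[
\|f\ast\eta-\zeta\|_{L_p(G;X)}\le\int_G|f(g_0)|\,\|\lambda_p(g_0)\xi-\xi\|_p\,dg_0\le\|f\|_1\,\varepsilon .
\]
Therefore $\polar{\lambda_p(f)}{X}\ge\|f\ast\eta\|_{L_p(G;X)}\ge\|\pi(f)x\|-\|f\|_1\varepsilon$; letting $\varepsilon\to0$ and taking the supremum over $\|x\|=1$ gives $\|\pi(f)\|_{B(X)}\le\polar{\lambda_p(f)}{X}$.

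For the ``in particular'' part, assume $L_p(G;X)\in A$ whenever $X\in A$. Then $\|f\|_{C_A(G)}\le\sup_{X\in A}\polar{\lambda_p(f)}{X}$ is just the previous bound applied to every SOT-continuous isometric representation $\pi$ of $G$ on a space $X\in A$, followed by the supremum defining $\|f\|_{C_A(G)}$. Conversely, for $X\in A$ the space $L_p(G;X)$ lies in $A$ and carries the representation $g\mapsto\lambda_p(g)\otimes\mathrm{id}_X$ (left translation), which is isometric because the left Haar measure is left-invariant and SOT-continuous because translation is continuous on $L_p(G;X)$ for $p<\infty$; its integrated form at $f$ is $\lambda_p(f)\otimes\mathrm{id}_X$, of norm $\polar{\lambda_p(f)}{X}$, so $\|f\|_{C_A(G)}\ge\polar{\lambda_p(f)}{X}$ for every $X\in A$. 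The two bounds give equality.

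The only genuinely non-formal ingredient is the amenability input --- the existence of Følner sets (equivalently of Reiter $(P_p)$ vectors) almost invariant under left translation by a prescribed compact set, for a possibly non-discrete, non-unimodular locally compact group. Everything else is routine: Bochner-integral manipulations in $L_p(G;X)$, the isometry of $\pi$, Young's and Minkowski's integral inequalities, and the immediate identification of $\polar{\lambda_p(f)}{X}$ with the norm of $f\ast(\cdot)$ on $L_p(G;X)$, valid because $\lambda_p(f)$ is everywhere defined on $L_p(G)$.
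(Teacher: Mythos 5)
Your proof is correct and follows essentially the same route as the paper: both arguments embed $X$ isometrically into $L_p(G;X)$ via $x\mapsto\bigl(g\mapsto\xi(g)\pi(g)^{-1}x\bigr)$ (the paper's map $\alpha$, your $\eta$) for a Reiter/F\o lner vector $\xi$, bound the defect $\|f\ast\eta-\zeta\|=\|\lambda(f)\alpha(x)-\alpha(\pi(f)x)\|$ by $\|f\|_1\sup_{g_0\in\operatorname{supp}(f)}\|\lambda_p(g_0)\xi-\xi\|_p$, and let amenability make that defect vanish. The ``in particular'' half is also handled the same way, by observing that $g\mapsto\lambda_p(g)\otimes\mathrm{id}_X$ is an SOT-continuous isometric representation on $L_p(G;X)\in A$ whose integrated form at $f$ has norm exactly $\polar{\lambda_p(f)}{X}$.
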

\begin{proof} Fix a norm $1$ element $\xi \in L_p(G)$ and define an isometric linear map $\alpha \colon X \to L_p(G;X)$ by $\alpha(x)(g) = \xi(g) \pi(g^{-1}) x$.

Then for $h \in G$, $(\alpha( \pi(h) x) - \lambda(h) \alpha(x))(g) = (\xi(g) - \xi(h^{-1} g )) \pi(g^{-1} h)x$, and 
\[\|\alpha( \pi(h) x) - \lambda(h) \alpha(x) \| = \|x\| \|\xi - \lambda(h) \xi\|_{L_p(G)}.\]
By the triangle inequality
\[\|\alpha( \pi(f) x) - \lambda(f) \alpha(x) \| \leq \|f\|_{L_1(G)}\|x\| \sup_{h \in \mathrm{supp}(f)} \|\xi - \lambda(h) \xi\|_{L_p(G)},\]
and using that $\alpha$ is isometric we obtain
\[ \| \pi(f)x\|   \leq \|\lambda(f)_X\| \|x\| + \|f\|_{L_1(G)} \|x\|\sup_{h \in \mathrm{supp}(f)} \|\xi - \lambda(h) \xi\|_{L_p(G)}.\]
We deduce
\[ \| \pi(f)\| \leq \|\lambda(f)_X\| + \|f\|_{L_1(G)} \sup_{h \in \mathrm{supp}(f)} \|\xi - \lambda(h) \xi\|_{L_p(G)}.\]
When $G$ is amenable, the last term can be made arbitrarily small, which proves the proposition.
\end{proof}
In the particular case of a compact group, this result lies at the heart of the proofs of Lafforgue's strong property (T) for higher-rank algebraic groups. For example, thanks to the techniques of strong property (T), the conjecture \cite{MR2316269} that any action by isometries of a lattice in a connected higher-rank simple Lie group on a super-reflexive Banach space has been reduced to the following conjecture, see \cite{MR2423763,MR2574023,strongTsp4}, see also \cite{Salle2015}. Denote, for any $\delta \in [-1,1]$, by $T_\delta$ the operator on $L_2(\mathbf{S}^2)$ mapping $f$ to the fonction $(T_\delta f)(x) =$ the average of $f$ on the circle $\{y \in \mathbf{S}^2 \mid \langle x,y\rangle = \delta\}$. For any $\theta \in \R/2\pi\Z$, denote by $S_\theta$ the operator on $L_2(\mathbf{S}^3)$ mapping $f$ to the fonction $(S_\theta f)(z) =$ the average of $f$ on the circle $\{\frac{1}{\sqrt 2}(e^{i\theta} + e^{i\varphi}j)z \mid \varphi \in \R/2\pi \Z\}$ (where we identify $\mathbf{S}^3$ with the norm $1$ quaternions in the usual way). The conjecture is that for every super-reflexive Banach space, there exist $\alpha>0$ and $C \in \R_+$ such that for every $\delta \in [-1,1]$ and $\theta \in \R$,
\[ \|(T_\delta - T_0)_X \| \leq C |\delta|^\alpha \textrm{ and }\|(S_\theta - S_{\pi/2})_X\| \leq C |\theta - \pi/2|^\alpha.\]

\subsection{Super-expanders and embeddability of graphs in Banach spaces}
Another motivation for studying the quantity $\|T_X\|$ is its well-known connection with Poincar\'e inequalities and embeddability of expanders in $X$. If $\mathcal{G}=(V,E)$ is a finite connected graph, we may define\footnote{There are many small variants of the definition. But they do not matter for the discussion here, though they do matter for other issues, see for example \cite{L2spectralGap}.} its $X$-valued $p$-Poincar\'e constant $\pi_{p,\mathcal{G}}(X)$ as the smallest constant $\pi$ such that for every $f \colon V \to X$ satisfying $\sum_{v \in V} \mathrm{deg}(v) f(v) =0$,
\[ \left(\sum_{v \in V} \mathrm{deg}(v) \|f(v)\|^p\right)^{\frac 1 p} \leq \pi \left(\sum_{(v,w) \in E} \|f(v) - f(w)\|^p\right)^{\frac 1 p}.\]
Note that $\pi_{p,\mathcal{G}}(X) = \|T_X\|$ for $T$ the inverse of the linear map $f \in \ell_p^0(V,\mathrm{deg}) \mapsto (f(v) - f(w))_{(v,w) \in E} \in \ell_p(E)$.

A sequence $\mathcal{G}_n=(V_n,E_n)$ of bounded degree graphs is called \emph{a sequence of expanders with respect to $X$} if $\lim_n |V_n|=\infty$ and $\sup_n \pi_{p,\mathcal{G}_n}(X)<\infty$. This does not depend on $p$ \cite{MR3356758,MR3208067,MR3503071}, see also \cite[Proposition 3.9]{L2spectralGap}. 

For example, if $p=2$ and $X=\K$ (or a Hilbert space), then $\pi_{p,2}(\K)$ is equal to $(2-2\lambda_2)^{-\frac 1 2}$, for $\lambda_2$ the second largest eigenvalue of the random walk operator on $\mathcal{G}$. So being a sequence of expanders with respect to $\K$, or to an $L^p$ space for some $p<\infty$, is the same as the usual definition of expander graphs. 

According to \cite{MR3210176}, a sequence $\mathcal{G}_n$ is called a sequence of super-expanders if they are expanders with respect to all uniformly convex Banach spaces. The existence of super-expanders is a difficult result. Essentially two classes of examples have been obtained, by Lafforgue \cite{MR2574023} and by Mendel and Naor \cite{MR3210176}. Lafforgue's examples are even expanders with respect to all Banach spaces of type $>1$. All these results are therefore results of the norm ``$T$ belongs to be bipolar of $S$'', where $S$ is any of the operators quantifying the fact that a Banach space has nontrivial type or is super-reflexive, and $T$ are correctly scaled operators in the definition of the $p$-Poincar\'e constant. Many intriguing questions remain open, which can all be formulated in the same way. For example, 
\begin{que}\cite{MR3210176} Are all expander sequences super-expanders? Expanders with respect to all spaces of non-trivial type?
  \end{que}

\begin{que}\cite{MR3210176,MR2574023} Does there exist a sequence of super-expanders of girth going to infinity? And of logarithmic girth in the number of vertices? Are the expanders coming from higher-rank simple Lie groups super-expanders?
  \end{que}

\begin{que}\cite{MR3210176,MR2574023} Does there exist a sequence of expanders with respect to all Banach spaces of nontrivial coptype?
\end{que}
A positive answer to this question is conjectured in \cite{MR3210176},
and Lafforgue even suggests that the super-expanders coming from
lattices in $\mathrm{SL}_3(\Q_p)$ (or other higher-rank simple
algebraic groups over non-archimedean local fields) as in
\cite{MR2574023} are such examples. But this is wide open, as is the following.

\begin{que}\cite{MR2732331} Are all expander sequences expanders with respect to all spaces of non-trivial cotype?
\end{que}

One of the reasons for the interest in expanders with respect to Banach spaces is the well-known fact, which essentially goes back to Gromov, that a sequence of expanders with respect to $X$ does not coarsely embed into $X$. See for example \cite[Section 3]{MR2732331}. Being an expander with respect to $X$ is much stronger than non coarse embeddability (a striking example is given in \cite{MR3334230}), but by \cite{MR2649350} (see also \cite{zbMATH05566508} for $L_1$ spaces) there is equivalence between non-coarse embeddability into families of Banach spaces under closed finite representability and $\ell_p$ direct sums and some \emph{other} forms of Poincar\'e inequalities.

\section{Preliminaries}\label{sec:preliminaries}

\subsection{On the bipolar in a dual Banach space}\label{subsec:polar_linear}
In the whole paper, for a subset $C$ of a real Banach space $E$ with dual $E^*$, we denote its polar
\[ C^\circ = \{ x^* \in E^*, \langle x^*,x \rangle \geq -1\textrm{ for all }x \in C\}.\]
When $C \subset E$ is a cone (that is $x \in C$ implies $\{tx \mid t \in [0,\infty)\} \subset C$), then its polar $C^\circ$ coincides with $\{ x^* \in E^*, \langle x^*,x \rangle \geq 0\textrm{ for all }x \in C\}$. It is also a cone.

Similarily, when $C \subset E^*$ we denote its polar for the weak-* topology by 
\[ \prep{C} = \{x \in E, \langle x^*,x \rangle \geq -1\textrm{ for all }x^* \in C\}.\]
Again, if $C$ is a cone, $\prep{C}$ coincides with $\{x \in E, \langle x^*,x \rangle \geq 0\textrm{ for all }x^* \in C\}$ and is again a cone.

It should be always clear from the context whether the polarity is considered in this linear setting of two vector spaces in duality or between $\Banach$ and $\Op$ as in Definition~\ref{defn:polar} and \ref{defn:prepolar}.

The classical bipolar theorems in this setting take the following forms:
\begin{thm}\label{thm:bipolar_Banach} Let $E$ be a real Banach space.

  If $C \subset E$, then its bipolar $\prep{(C^\circ)}$ is equal to the norm closure of the convex hull of $C \cup\{0\}$.

  If $C \subset E^*$, then its bipolar $(\prep{C})^\circ$ is equal to the weak-* closure of the convex hull of $C \cup\{0\}$.
\end{thm}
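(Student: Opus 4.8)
The statement to prove is Theorem~\ref{thm:bipolar_Banach}, the classical bipolar theorem in the Banach space setting (both for subsets of $E$ with the norm topology, and subsets of $E^*$ with the weak-* topology). Let me sketch a proof.

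\textbf{Plan.} This is the classical bipolar theorem, and the natural route is via the Hahn--Banach separation theorem. I will treat the two cases in parallel since they are formally dual.

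\textbf{Easy inclusion.} First observe that in both cases the polar of any set is convex, contains $0$ (since $\langle x^*, 0\rangle = 0 \geq -1$), and is closed — in the $E$ case $C^\circ$ is an intersection of norm-closed half-spaces $\{x^* : \langle x^*, x\rangle \geq -1\}$ hence weak-* closed hence norm closed, and dually $\prep{C}$ is an intersection of weak-* closed half-spaces hence weak-* closed. Since $C \subseteq \prep{(C^\circ)}$ trivially (if $x \in C$ and $x^* \in C^\circ$ then $\langle x^*, x\rangle \geq -1$ by definition of $C^\circ$), and $\prep{(C^\circ)}$ is norm closed and convex and contains $0$, it contains the norm-closed convex hull of $C \cup \{0\}$. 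Symmetrically for the $E^*$ case with the weak-* closed convex hull.

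\textbf{Hard inclusion.} For the reverse, let $D$ denote the (norm- or weak-*-) closed convex hull of $C \cup \{0\}$ and suppose $x_0 \notin D$. The goal is to produce a functional in $C^\circ$ (resp.\ a point of $\prep{C}$) witnessing $x_0 \notin \prep{(C^\circ)}$ (resp.\ $\notin (\prep{C})^\circ$). In the first case, $D$ is a norm-closed convex set in the Banach space $E$ and $\{x_0\}$ is a compact convex set disjoint from it, so the geometric Hahn--Banach theorem gives $x^* \in E^*$ and reals $\alpha < \beta$ with $\langle x^*, x\rangle \leq \alpha$ for all $x \in D$ and $\langle x^*, x_0\rangle \geq \beta$. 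Since $0 \in D$ we get $\alpha \geq 0$, so $\alpha > 0$ after rescaling $x^*$ we may assume $\alpha = 1$ hmm — let me be careful: we want $\langle x^*, x\rangle \geq -1$ on $C$; replace $x^*$ by $-x^*/\alpha$ if $\alpha>0$ (and note $\alpha>0$ can be arranged since $\beta>\alpha\geq 0$ forces... actually if $\alpha = 0$ then $\langle x^*,x_0\rangle \geq \beta > 0$ and $\langle x^*, x \rangle \leq 0$ on $C$, then any positive scaling works and $-x^*$ lies in $C^\circ$ with $\langle -x^*, x_0\rangle < 0 \leq$, wait $\langle -x^*, x_0\rangle \leq -\beta < -1$ after scaling). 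In all cases, scaling gives $y^* := -x^*/\max(\alpha, \text{something})$ with $\langle y^*, x\rangle \geq -1$ for $x \in C$, i.e.\ $y^* \in C^\circ$, yet $\langle y^*, x_0\rangle < -1$, so $x_0 \notin \prep{(C^\circ)}$. In the second case, $D \subseteq E^*$ is weak-* closed and convex; here the point is that the dual of $(E^*, \text{weak-*})$ is exactly $E$, so the Hahn--Banach separation theorem for the locally convex space $(E^*, \text{weak-*})$ produces an element of $E$ (not merely of $E^{**}$) separating $x_0$ from $D$, and the same scaling argument applies.

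\textbf{Main obstacle.} The only genuinely delicate point is the second case: one must use the separation theorem in the locally convex topological vector space $(E^*, \sigma(E^*,E))$ and invoke the identification of its continuous dual with $E$ — separating in the norm topology of $E^*$ would only yield a functional in $E^{**}$, which is not enough. Also one must make sure the separation theorem applies: a weak-* closed convex set and a disjoint point — the point is compact, so the strict-separation version of Hahn--Banach in locally convex spaces applies. The rest (checking polars are closed convex cones-through-0, the sign bookkeeping in the rescaling) is routine.

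\begin{proof}
We prove the two statements simultaneously; write $E_1 = E$ with its norm topology and $E_1^\bullet = E^*$ in the first case, and $E_1 = (E^*, \sigma(E^*,E))$ with $E_1^\bullet = E$ in the second, so that in both cases $E_1$ is a locally convex space whose continuous dual is $E_1^\bullet$, the polar operation sends $C \subseteq E_1$ to $C^\circ = \{\phi \in E_1^\bullet : \langle \phi, x\rangle \geq -1 \ \forall x \in C\}$ and $B \subseteq E_1^\bullet$ to $\prep{B} = \{x \in E_1 : \langle \phi, x \rangle \geq -1 \ \forall \phi \in B\}$, and we must show the bipolar of $C \subseteq E_1$ equals $\overline{\mathrm{conv}}(C \cup \{0\})$, the closure being in $E_1$.

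For any set $C \subseteq E_1$, the bipolar $\prep{(C^\circ)}$ is an intersection of closed half-spaces $\{x : \langle \phi, x\rangle \geq -1\}$, $\phi \in C^\circ$, hence is closed and convex; it contains $0$; and it contains $C$, because if $x \in C$ then every $\phi \in C^\circ$ satisfies $\langle \phi, x\rangle \geq -1$ by definition. Therefore $\prep{(C^\circ)} \supseteq \overline{\mathrm{conv}}(C \cup \{0\})$.

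For the converse inclusion, set $D = \overline{\mathrm{conv}}(C \cup \{0\})$ and suppose $x_0 \in E_1 \setminus D$. Since $D$ is a nonempty closed convex subset of the locally convex space $E_1$ and $\{x_0\}$ is compact and disjoint from $D$, the Hahn--Banach separation theorem provides $\phi \in E_1^\bullet$ and real numbers $\alpha < \beta$ with
\[
\langle \phi, x \rangle \le \alpha \quad \text{for all } x \in D, \qquad \langle \phi, x_0 \rangle \ge \beta .
\]
Since $0 \in D$ we have $\alpha \ge 0$. If $\alpha > 0$, put $\psi = -\phi/\alpha$; then for $x \in C \subseteq D$ we get $\langle \psi, x\rangle = -\langle \phi, x\rangle/\alpha \ge -1$, so $\psi \in C^\circ$, while $\langle \psi, x_0\rangle \le -\beta/\alpha < -1$, so $x_0 \notin \prep{(C^\circ)}$. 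If $\alpha = 0$, then $\langle \phi, x\rangle \le 0$ for all $x \in C$ and $\langle \phi, x_0\rangle \ge \beta > 0$; choosing $t > 0$ with $t\beta > 1$ and putting $\psi = -t\phi$, we again have $\langle \psi, x\rangle \ge 0 \ge -1$ for $x \in C$, hence $\psi \in C^\circ$, and $\langle \psi, x_0\rangle \le -t\beta < -1$, so $x_0 \notin \prep{(C^\circ)}$. In either case $x_0 \notin \prep{(C^\circ)}$, which proves $\prep{(C^\circ)} \subseteq D$ and completes the proof.
\end{proof}
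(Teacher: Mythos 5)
Your proof is correct and is the standard Hahn--Banach separation argument. The paper itself gives no proof of this theorem — it is stated as classical and used as a black box — so there is nothing in the paper to compare against; your writeup supplies exactly the expected argument, and you correctly flag the one genuinely delicate point, namely that in the $E^*$ case the separation must be carried out in the locally convex space $(E^*, \sigma(E^*,E))$ precisely because its continuous dual is $E$ rather than $E^{**}$.
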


The second statement is not so useful for our purposes because taking the weak-* closure can be quite complicated, as we shall soon recall. Fortunately, there is an interesting consequence of the Krein-Smulian theorem \cite[Theorem V.12.1]{MR1070713}, which asserts that a convex subset of $E^*$ for a separable Banach space $E$ is weak-* closed if and only if it is sequentially weak-* closed, see \cite[Theorem V.12.10]{MR1070713}. This allows to significantly strengthen the result for separable Banach spaces as follows.

If $C$ is a subset of a dual $E^*$, let us define an increasing family of subsets $C_\alpha \subset E^*$ indexed by the ordinals $\alpha$ by letting $C_0=C$, $C_\alpha$ be the set of all weak-* limits of sequences in $C_{\alpha-1}$ if $\alpha$ is a successor and $C_\alpha = \cup_{\beta <\alpha} C_\beta$ if $\alpha$ is a limit ordinal. The smallest ordinal $\alpha$ such that $C_\alpha = C_{\alpha+1}$ (that is $C_\alpha$ is weak-* sequentially closed) is sometimes called the order of $C$. When $E$ is separable, the order of $C$ is countable, see for example the argument in the proof of \cite[Theorem V.12.10]{MR1070713}. 
Moreover, if $C$ is convex, then so is $C_\alpha$ for every $\alpha$. It follows from  \cite[Theorem V.12.10]{MR1070713} that, for the order of $C$, $C_\alpha$ coincides with the weak-* closure of $C$. Let us summarize this discussion.

\begin{prop} Let $E$ be a real separable Banach space and $C$ be a subset of $E^*$. There is a countable ordinal $\alpha$ such that the bipolar of $C$ coincides with $(\operatorname{conv}(C))_\alpha$.\end{prop}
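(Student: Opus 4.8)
The plan is to deduce the statement directly from the discussion that immediately precedes it, which has already assembled all the needed ingredients; the only thing left is to organize them into a clean argument. Concretely, I would start from $C \subset E^*$, replace $C$ by $\operatorname{conv}(C)$ (recording that, as noted, convexity is preserved under the operations $D \mapsto \{\text{weak-}* \text{ limits of sequences in } D\}$ and countable unions, so every $C_\alpha$ in the transfinite hierarchy built on $\operatorname{conv}(C)$ is convex), and then run the transfinite construction $C_0 = \operatorname{conv}(C)$, $C_\alpha = $ set of weak-$*$ limits of sequences in $C_{\alpha-1}$ for successor $\alpha$, $C_\alpha = \bigcup_{\beta<\alpha} C_\beta$ for limit $\alpha$.

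The first key step is to check this family stabilizes at a countable ordinal. The family $(C_\alpha)$ is increasing, so by cardinality considerations it must stabilize at some ordinal; the point is that for $E$ separable one can take this ordinal countable. I would invoke the argument in the proof of \cite[Theorem V.12.10]{MR1070713}: fixing a countable dense subset of $E$ and noting that a weak-$*$ convergent sequence in the (norm-bounded pieces of the) dual is determined by countably many scalar conditions, one shows that the process of adding limits of sequences cannot strictly increase the set more than countably many times. Let $\alpha$ denote the resulting countable order of $C$, so that $C_\alpha = C_{\alpha+1}$, i.e.\ $C_\alpha$ is sequentially weak-$*$ closed.

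The second key step is to identify $C_\alpha$ with the bipolar. Since $C_\alpha$ is convex (by the stability remark above) and sequentially weak-$*$ closed, the Krein--Smulian theorem \cite[Theorem V.12.1]{MR1070713} — in the form that for separable $E$ a convex subset of $E^*$ is weak-$*$ closed iff it is sequentially weak-$*$ closed, \cite[Theorem V.12.10]{MR1070713} — gives that $C_\alpha$ is weak-$*$ closed. On the other hand $C_\alpha \supseteq \operatorname{conv}(C) \ni 0$ (note $0 \in C^\circ{}^\circ$ forces us only to check $0$ lies in the convex hull, which it does once we have thrown it in; here I would simply recall that $C_0 = \operatorname{conv}(C)$ and the polar of any set contains the relevant origin, so to match Theorem~\ref{thm:bipolar_Banach} exactly one takes $\operatorname{conv}(C \cup \{0\})$, and this changes nothing in the hierarchy), and every weak-$*$ closed convex set containing $\operatorname{conv}(C)$ contains every $C_\beta$ by transfinite induction (successor step: closed under sequential weak-$*$ limits; limit step: closed under unions), hence contains $C_\alpha$. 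Therefore $C_\alpha$ is the smallest weak-$*$ closed convex set containing $C \cup \{0\}$, which by the second part of Theorem~\ref{thm:bipolar_Banach} is exactly the bipolar $(\prep{C})^\circ$.

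I do not expect a genuine obstacle here: the proposition is essentially a repackaging of Krein--Smulian plus the separability bound on the order, both of which are quoted from \cite{MR1070713}, and the transfinite bookkeeping (monotonicity, convexity at each stage, minimality) is routine. The only mild subtlety worth stating carefully is the interplay between "$C$" and "$\operatorname{conv}(C)$" versus "$\operatorname{conv}(C \cup \{0\})$": the bipolar in Theorem~\ref{thm:bipolar_Banach} is the weak-$*$ closed convex hull of $C \cup \{0\}$, so I would phrase the statement (as the paper does) with $\operatorname{conv}(C)$ understood to include $0$ automatically when $C$ is, e.g., a cone, or else simply note that adjoining $0$ does not affect the order. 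If one wanted to be scrupulous one could also remark that the order of $C$ and of $\operatorname{conv}(C)$ need not coincide, but since the statement only asserts existence of \emph{some} countable $\alpha$, taking $\alpha$ to be the order of $\operatorname{conv}(C)$ suffices.
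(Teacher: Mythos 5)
Your argument follows exactly the same route as the paper, which gives no separate proof of this proposition and explicitly presents it as a summary of the preceding paragraph: one bounds the order of the hierarchy by a countable ordinal using the separability argument from \cite[Theorem V.12.10]{MR1070713}, notes that convexity is preserved at each stage, and then applies Krein--Smulian to conclude that the stable stage is weak-* closed and hence equals the weak-* closed convex hull. One remark on the point you flag but then wave away: the discrepancy between $\operatorname{conv}(C)$ and $\operatorname{conv}(C\cup\{0\})$ does matter in general. Your claim that adjoining $0$ ``changes nothing in the hierarchy'' is not literally correct --- if $C=\{x^*\}$ for a single nonzero $x^*$, then $(\operatorname{conv}(C))_\alpha=\{x^*\}$ for every $\alpha$ since a singleton is weak-* closed, whereas the bipolar contains $0$. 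So the statement as printed really should read $(\operatorname{conv}(C\cup\{0\}))_\alpha$. This imprecision is already present in the paper's formulation, however, and is harmless in all its applications because there $C$ is always a cone (e.g.\ $P(B,n)$), so that $0\in C$ automatically.
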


The smallest ordinal $\alpha$ in the previous proposition measures the difficulty to construct the bipolar of $C$ out of $C$.

The order has been more studied for linear subspaces $C$. It is known that for many cases, every countable ordinal appears as the order of a linear subspace of $E^*$. This was stated by Banach \cite{MR231182}, and later examples such as $E^*=\ell_1=(c_0)^*,\ell_\infty,H_\infty$ were provided together will full proofs \cite{MR234264,MR227742}. It is now known that this holds whenever $E$ is not quasi-reflexive, that is when the canonical image of $E$ has infinite codimension in its bidual \cite{zbMATH04038488}. See also the survey \cite{zbMATH01989630} for more information on this.

It turns out that, for our applications, the order will always be equal to $1$. This will follow from the following result.
\begin{prop}\label{prop:bipolar_with_compactness} Let $E$ be a real Banach space and $C \subset E^*$. Assume that there is a convex subset $A \subset E$ such that $A \cap \{x \in E \mid \|x\| \leq r\}$ is norm-compact for every $r>0$ and $A^\circ \subset C$.

  Then the bipolar $(\prep{C})^\circ$ of $C$ is equal to the norm closure of the convex hull of $C$.
\end{prop}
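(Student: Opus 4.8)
The plan is to follow the standard bipolar argument but replace the weak-* closure (which a priori could require transfinitely many steps) by the norm closure, using the compactness hypothesis to upgrade weak-* convergence to norm convergence at the crucial moment. Write $D$ for the norm closure of the convex hull of $C$. The inclusion $D \subset (\prep{C})^\circ$ is immediate: $(\prep{C})^\circ$ is convex, weak-* closed (hence norm closed) and contains $C$, so it contains $D$. The content is the reverse inclusion. So fix $x^*_0 \in E^* \setminus D$; I want to produce $x \in \prep{C}$ with $\langle x^*_0, x\rangle < -1$, which shows $x^*_0 \notin (\prep{C})^\circ$.

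Since $D$ is a norm-closed convex set not containing $x^*_0$, the Hahn-Banach separation theorem in $E^*$ gives a \emph{bounded linear functional on $E^*$} — that is, an element $\xi$ of the bidual $E^{**}$ — and a constant separating $x^*_0$ from $D$. The difficulty, and the whole point of the proposition, is that $\xi$ need not come from $E$ itself; a priori it only lies in $E^{**}$, and to land in $\prep{C}$ I need a genuine element of $E$. This is exactly where the convex set $A$ and the local compactness hypothesis enter. First I would record that $A^\circ \subset C$ forces $\prep{C} \subset \prep{(A^\circ)} = \overline{A}$ (norm closure of $A$, using that $A$ is convex and the linear bipolar theorem, Theorem~\ref{thm:bipolar_Banach}, together with $0 \in \overline A$ — one should check $0 \in \overline A$, or else replace $A$ by $A \cup \{0\}$ whose convex hull has the same bounded-compact sections). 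Conversely, and more usefully, I claim $C^\circ \supset$ nothing new is needed; rather the key is: an element $\xi \in E^{**}$ that is a weak-* limit of a net in $A$ and is bounded actually lies in $\overline{A} \subset E$, by the compactness assumption. Concretely: if $\xi \in E^{**}$ satisfies $\langle \xi, x^*\rangle \ge 0$ for all $x^* \in C$, I want to show $\xi \in \overline A \subset E$.

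To make that work I would argue as follows. The hypothesis ``$A \cap rB_E$ is norm-compact for every $r$'' means $A$ is a countable increasing union of norm-compact convex sets $K_r = \overline{\mathrm{conv}}(A \cap rB_E)$ (taking $r \in \N$); each $K_r$, being norm-compact and convex in $E$, is weak-* compact when viewed inside $E^{**}$, hence weak-* closed there, and moreover its weak-* closure in $E^{**}$ equals $K_r$ itself. Now given the separating $\xi \in E^{**}$ with, say, $\langle \xi, x^*_0 \rangle < -1 \le \langle \xi, x^* \rangle$ for all $x^* \in D \supset C$: rescaling, $\xi \in (\prep{C})$-analogue inside $E^{**}$, i.e. $\langle x^*, \xi\rangle \ge -1$ on $C \supset A^\circ$. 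So $\xi$ lies in the bipolar of $A$ computed in the dual pair $(E^*, E^{**})$ — but that bipolar, by Theorem~\ref{thm:bipolar_Banach} applied with $E^*$ in the role of the Banach space, is the weak-*(i.e. $\sigma(E^{**},E^*)$)-closure of $\mathrm{conv}(A \cup\{0\})$. I then need that this weak-* closure meets $E$ in exactly $\overline{A}^{\,\|\cdot\|} \cup\{0\}$-ish and more importantly that $\xi$ itself is in $E$. For this: the weak-* closure of $\mathrm{conv}(A\cup\{0\})$ is contained in the weak-* closure of $\bigcup_r \lambda_r K_r$ for suitable scalars, and because each $K_r$ is already weak-* compact (hence closed) in $E^{**}$, a Baire-category / boundedness argument — $\xi$ is bounded, so it is separated from $0$ by finitely much and its ``mass'' is captured at finite level $r$ — pins $\xi$ down to a single $K_r \subset E$. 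The cleanest route is probably: the function $x^* \mapsto \langle \xi, x^*\rangle$ is weak-* continuous on bounded sets iff $\xi \in E$ by Banach–Dieudonné/Krein–Smulian, and weak-* continuity on bounded sets of $E^*$ can be verified against the net in $\mathrm{conv}(A\cup\{0\})$ converging to $\xi$, using that on the set $A \cap rB_E$ (norm compact) the evaluations converge uniformly.

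The main obstacle — and the only non-formal step — is precisely this last point: showing the separating functional, which Hahn–Banach hands us in $E^{**}$, actually belongs to $E$. Everything else (the easy inclusion, the reduction via $A^\circ \subset C$, and the final contradiction with $x^* _0$) is routine manipulation of polars. I expect the proof to isolate a lemma of the shape: \emph{if $A \subset E$ is convex with $A \cap rB_E$ norm-compact for all $r$, then the $\sigma(E^{**},E^*)$-closure of $A$ is contained in $E$} (equivalently equals the norm closure of $A$), proved by the compact-pieces-plus-Krein–Smulian argument sketched above, and then the proposition follows by feeding the Hahn–Banach functional into this lemma.
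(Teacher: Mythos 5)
Your proposal is correct and follows essentially the same route as the paper: Hahn--Banach separation in $E^*$ produces $\varphi\in E^{**}$; the hypothesis $A^\circ\subset C$ places $\varphi$ in the bipolar of $j(A)$; and Krein--Smulian together with the norm-compactness of $A\cap rB_E$ (so $j(A)\cap rB_{E^{**}}=j(A\cap rB_E)$ is norm-compact, hence weak-* closed) shows $j(A)$ is already weak-* closed, forcing $\varphi\in j(A)\subset j(E)$. Your detour through Baire category and Banach--Dieudonn\'e is unnecessary --- the paper applies Krein--Smulian directly to the convex set $j(A)$ rather than to the functional $\varphi$ --- but the key lemma you isolate at the end is exactly the step the paper proves inline.
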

\begin{proof}
Note that our assumptions implies that $0 \in C$ (as $0 \in A^\circ$). Let $C'$ be the norm closure of the convex hull of $C$. We know from the bipolar theorem (Theorem \ref{thm:bipolar_Banach}) that $(\prep{C})^\circ$ is equal to the weak-* closure of $\mathrm{conv}(C)$, so the inlusion $C' \subset (\prep{C})^\circ$ is obvious. To prove the converse inclusion, consider $x \in E^* \setminus C'$. We have to prove that $x$ does not belong to the weak-* closure of the convex hull of $C$.

  Let $j \colon E \to E^{**}$ be the canonical inclusion of $E$ in its bidual. By the Hahn-Banach separation theorem in the Banach space $E^*$, there is $\varphi \in E^{**}$ such that $\inf_{C} \varphi \geq -1$ and $\varphi(x)<-1$. In particular, we have $\inf_{A^\circ} \varphi \geq -1$, that is $\varphi \in (A^\circ)^\circ = (\prep{(j(A))})^\circ$. By Theorem \ref{thm:bipolar_Banach} again, $(\prep{(j(A))})^\circ$ is equal to the weak-* closure of (the convex set) $j(A)$. But the assumption on $A$ implies that $j(A)$ is already weak*-closed. Indeed, by the Krein-Smulian theorem, it is enough to show that $j(A) \cap \overline{B}_{E^{**}}(0,r)$ is weak-* closed for every $r>0$. This is true as $j(A) \cap \overline{B}_{E^{**}}(0,r) = j(A \cap \overline{B}_{E}(0,r))$ is even norm-compact as a continuous image of a norm-compact set, and norm-compact subsets of $E^{**}$ are weak-* closed. So $j(A)$ being weak-* closed, we have proved that $\varphi \in j(A)$. In particular, $\varphi$ is $\sigma(E^*,E)$-continuous, and we obtain, as announced, that $x$ does not belong to the weak-* closure of the convex hull of $C$. 
\end{proof}




\subsection{Reminders on the Jordan decomposition of measures}
Recall that any signed measure $m$ on a Borel space has a unique decomposition $m = m_+-m_-$ for two positive measures satisfying $\|m\| = \|m_+\|+\|m_-\|$ (where the norm is the total variation norm). This is the Jordan decomposition of $m$. If $m=m_1-m_2$ is any other decomposition with $m_1,m_2$ positive measures, then $m_1-m_+=m_2-m_-$ is a positive measure. We will use the following elementary fact.
\begin{lem}\label{lem:jordan_decomposition} Let $m$ and $m'$ be any signed measure, and let $m_1,m_2$ be any positive finite measures such that $m=m_1-m_2$. There is a decomposition $m'=m'_1-m'_2$ with
  \[ \|m_1-m'_1\| + \|m_2-m'_2\| = \|m - m'\|.\]
\end{lem}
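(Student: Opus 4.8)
The plan is to avoid working with the Jordan decomposition of $m'$ directly, and instead apply the Jordan decomposition to the \emph{difference} $m-m'$, absorbing its positive and negative parts into $m_2$ and $m_1$ respectively.

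First I would write $\mu := m-m'$ and take its Jordan decomposition $\mu = \mu_+ - \mu_-$, so that $\|\mu\| = \|\mu_+\| + \|\mu_-\|$ and $\mu_+,\mu_-$ are positive finite measures. Then I would simply set
\[ m'_1 := m_1 + \mu_-, \qquad m'_2 := m_2 + \mu_+. \]
These are positive finite measures, being sums of positive finite measures, and
\[ m'_1 - m'_2 = (m_1 - m_2) - (\mu_+ - \mu_-) = m - \mu = m', \]
so $m' = m'_1 - m'_2$ is a valid decomposition. Finally, since $m_1 - m'_1 = -\mu_-$ and $m_2 - m'_2 = -\mu_+$, we get
\[ \|m_1 - m'_1\| + \|m_2 - m'_2\| = \|\mu_-\| + \|\mu_+\| = \|\mu\| = \|m - m'\|, \]
which is exactly the claimed identity.

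There is essentially no obstacle here: the only point requiring a moment's thought is the choice to decompose $m-m'$ rather than $m'$ itself, and to add $\mu_-$ (not $\mu_+$) to $m_1$ so that the signs work out and the total variations add up without cancellation. The finiteness hypothesis on $m_1,m_2$ is used only to guarantee $m'_1,m'_2$ are finite measures; everything else is immediate from the defining property of the Jordan decomposition.
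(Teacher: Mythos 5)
Your proof is correct, and it takes a genuinely different — and cleaner — route than the paper. The paper takes the Jordan decompositions of $m$ and $m'$ separately, invokes without detail the identity $\|m - m'\| = \|m_+ - m'_+\| + \|m_- - m'_-\|$ (the ``small computation''), and then uses the minimality property of the Jordan decomposition (that $m'' := m_1 - m_+ = m_2 - m_-$ is a positive measure) to shift $m'_+$ and $m'_-$ by a common amount. You instead Jordan-decompose the \emph{difference} $\mu = m - m'$ and set $m'_1 = m_1 + \mu_-$, $m'_2 = m_2 + \mu_+$, which makes the required norm identity tautological from $\|\mu\| = \|\mu_+\| + \|\mu_-\|$ and bypasses both the auxiliary computation and the minimality property entirely. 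Your version is shorter, uses fewer facts about the Jordan decomposition, and is self-contained; the paper's version has the (mild) advantage of exhibiting the relation between the two Jordan decompositions, but that structure is not used elsewhere. One small addition worth making explicit: positivity and finiteness of $m'_1, m'_2$ rely on $m$, $m'$ being of finite total variation so that $\mu_\pm$ are finite, which is implicit in the paper's use of the total variation norm.
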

\begin{proof} Let $m=m_+-m_-$ and $m'=m'_+-m'_-$ be the Jordan decompositions. A small computation gives that $\|m-m'\| = \|m_+ - m'_+\| + \|m_--m'_-\|$.

  By the property of the Jordan decomposition just recalled, $m'':=m_1-m_+=m_2-m_-$ is a positive measure. Define $m'_1=m'_++m''$ and $m'_2=m'_2+m''$, so that $m'=m'_1-m'_2$ and
  \[ \|m_1-m'_1\| + \|m_2-m'_2\| = \|m_+-m'_+\| + \|m_--m'_-\| = \|m-m'\|.\]
  \end{proof}

\subsection{On (\ref{item:subtle}) in Corollary~\ref{cor:bipolar_of_operator}}
This short subsection is not needed anywhere else in the paper, but it hopefully illustrates some basic things about Theorem~\ref{thm:bipolar_of_operator_explicit} and Corollary~\ref{cor:bipolar_of_operator}. We start by a lemma which clarifies in which situation an operator $T$ is of the form (\ref{item:subtle}) in Corollary~\ref{cor:bipolar_of_operator}.
\begin{lem} Let $T$ be a norm $\leq 1$ operator between subspaces $\dom(T),\ran(T) \subset L_p(\Omega,m)$ and $A \subset \Omega$ measurable. The following are equivalent.
  \begin{itemize}
  \item $Tf(x) = f(x)$ for almost every $x \in \Omega \setminus A$ and every $f \in \dom(T)$.
  \item If we write $L_p(\Omega,m) = L_p(A,m) \oplus_p L_p(\Omega \setminus A,m)$, then there is an operator $S$ with domain equal to the image of $\dom(T)$ by the first coordinate projection such that $T(f_1,f_2) = (Sf_1,f_2)$ for all $(f_1,f_2) \in \dom(T)$.
  \end{itemize}
  In that case, $S$ is unique, $\dom(S) = \{ f\left|_{A}\right. , f \in \dom(S)\}$ and $S (f\left|_{A}\right.) = (Tf)\left|_{A}\right.$ for all $f \in \dom(T)$.
\end{lem}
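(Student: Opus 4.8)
The plan is to treat the two implications separately. The implication from the second (structural) statement to the first (pointwise) one is immediate: if $T(f_1,f_2)=(Sf_1,f_2)$ for every $(f_1,f_2)\in\dom(T)$ under the decomposition $L_p(\Omega,m)=L_p(A,m)\oplus_p L_p(\Omega\setminus A,m)$, then the component of $Tf$ in $L_p(\Omega\setminus A,m)$ is literally that of $f$, which says exactly that $Tf(x)=f(x)$ for a.e.\ $x\in\Omega\setminus A$. So all the content is in the converse, and it rests on a one-line cancellation using $p<\infty$ and $\|T\|\le 1$.

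For the converse, assume $Tf(x)=f(x)$ a.e.\ on $\Omega\setminus A$ for every $f\in\dom(T)$. I would first note that $D:=\{f|_A : f\in\dom(T)\}$ — which is exactly the image of $\dom(T)$ under the first coordinate projection — is a linear subspace of $L_p(A,m)$, and try to define $S\colon D\to L_p(A,m)$ by $S(f|_A)=(Tf)|_A$. The key point is well-definedness. If $f,f'\in\dom(T)$ agree a.e.\ on $A$, set $h=f-f'\in\dom(T)$; then $h$ vanishes a.e.\ on $A$, so $\|h\|_p^p=\int_{\Omega\setminus A}|h|^p\,dm$, while $Th=h$ a.e.\ on $\Omega\setminus A$ by hypothesis gives $\|Th\|_p^p=\int_A|Th|^p\,dm+\int_{\Omega\setminus A}|h|^p\,dm$; since $\|Th\|_p\le\|h\|_p<\infty$, cancelling the common finite term forces $\int_A|Th|^p\,dm=0$, hence $(Th)|_A=0$ a.e.\ (here $p<\infty$ is used), and by linearity of $T$ we get $(Tf)|_A=(Tf')|_A$. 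Linearity of $S$ is then inherited from that of $T$ and of restriction. Finally, writing $f=(f_1,f_2)\in\dom(T)$ with $f_1=f|_A\in D$ and $f_2=f|_{\Omega\setminus A}$, the hypothesis says the second component of $Tf$ is $f_2$ and the first is $Sf_1$ by construction, so $T(f_1,f_2)=(Sf_1,f_2)$ with $\dom(S)=D$, which is the second statement.

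For the closing assertions: uniqueness is forced, since any $S'$ with $T(f_1,f_2)=(S'f_1,f_2)$ on $\dom(T)$ must satisfy $S'f_1=(Tf)|_A$ for every $f_1=f|_A\in D=\dom(S')$, the same formula as for $S$; and the two identities $\dom(S)=\{f|_A : f\in\dom(T)\}$ and $S(f|_A)=(Tf)|_A$ are built into the construction. I would also record that $S$ automatically has operator norm $\le 1$: from $\|Sf_1\|_p^p+\|f_2\|_p^p=\|T(f_1,f_2)\|_p^p\le\|f_1\|_p^p+\|f_2\|_p^p$, cancelling $\|f_2\|_p^p<\infty$ yields $\|Sf_1\|_p\le\|f_1\|_p$. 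I do not expect a genuine obstacle here; the only points to be attentive to are that the two cancellations use finiteness of the relevant $L_p$-norms and that, since $p<\infty$, vanishing of $\int_A|Th|^p\,dm$ really forces $Th$ to vanish a.e.\ on $A$.
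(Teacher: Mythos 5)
Your proposal is correct and follows essentially the same route as the paper: the paper first observes that the pointwise condition is equivalent to having a map $S\colon\dom(T)\to L_p(A,m)$ with $T(f_1,f_2)=(S(f_1,f_2),f_2)$ and then shows $S(0,f_2)=0$ by the cancellation $\|S(0,f_2)\|_p^p+\|f_2\|_p^p=\|T(0,f_2)\|_p^p\le\|f_2\|_p^p$, which is exactly your well-definedness argument for $h=f-f'$. Your added observation that $\|S\|\le 1$ follows by the same cancellation is correct, though not stated in the paper.
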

\begin{proof} Clearly, the assumption that $Tf(x) = f(x)$ for almost every $x \in \Omega \setminus A$ and every $f \in \dom(T)$ is equivalent to the existence of a linear map $S \colon \dom(T) \to L_p(A,m)$ such that $T(f_1,f_2) = (S(f_1,f_2),f_2)$. So to prove the equivalence stated in the lemma, we have to observe that, in this situation, $S(f_1,f_2)$ depends only on $f_1$, \emph{i.e.} (by linearity) that $S(f_1,f_2)=0$ if $f_1=0$. For $(0,f_2) \in \dom(T)$ we have $\|T (0,f_2)\|_p^p = \|S(0,f_2)\|_p^p+\|f_2\|^p$, which (by the assumption that $\|T\|\leq 1$) is less than $\|f_2\|^p$. This proves that $\|S(0,f_2)\|_p^p =0$, as requested.

  The last assertion is a tautology.
\end{proof}

Finally, we provide an example that illustrates the main result.
\begin{ex}\label{example:composition} The inequality $\| (T\circ S)_X \| \leq \|T_X\| \|S_X\|$ is clear for every Banach space $X$ and every operators $T,S$ such that $T \circ S$ makes sense. So it follows from Corollary~\ref{cor:bipolar_of_operator} that, with the notation therein, if $S,T \in B$ then $T \circ S$ belongs to $B'$. We prove this directly, because it illustrates the subtle property (\ref{item:subtle}).

So let $S,T\in B$ such that $\ran(S) \subset \dom(T)$. By (\ref{item:ellpdirectsum}) the operator $S\oplus T \colon \dom(S) \oplus \dom(T) \to \ran(S) \oplus \ran(T)$ belongs to $B'$. By composing by the spatial isometry $(f,g) \in \ran(S) \oplus \ran(T) \mapsto (g,f) \in \ran(T) \oplus \ran(S)$ (which is allowed by (\ref{item:composing_by_spatial_isom}))  and restricting to the subspace $D=\{(f,Sf) | f \in \dom(S)\} \subset \dom(S) \oplus \dom(T)$ (which is allowed by (\ref{item:limits})), we obtain that the map $(f,Sf) \in D \mapsto (T \circ S f,Sf)$ belongs to $B'$. By (\ref{item:subtle}), we conclude that $T \circ S$ belongs to $B'$ as required. 
\end{ex}

\section{The space of degree $p$ homogeneous functions on $\K^n$}\label{sec:Hn}

Let $n$ be a positive integer. Denote by $|z|$ the $\ell_p$-norm on $\K^n$
\[ |z| = \left( |z_1|^p+ \dots + |z_n|^p \right)^{\frac 1 p}.\]
In the rare occasions when we want to insist on $p$, we write $|z|_p$ for this quantity.

 A function $\varphi \colon \K^{n} \to \R$ is called homogeneous of degree $p$ if $\varphi(\lambda z) = |\lambda|^p \varphi(z)$ for all $z \in \K^n$ and $\lambda \in \K$. The space $H_n$ of continuous homogeneous of degree $p$ functions on $\K^n$ is a Banach space over the field of real numbers for the topology of uniform convergence on compact subsets on $\K^n$. A particular choice of norm is $\|\varphi\|= \sup_{|z| \leq 1} |\varphi(z)|$, so that for this norm $H_n$ is isometrically isomorphic to the space of real-valued continuous functions on $\KP^{n-1}$ through the identification of $\varphi \in H_n$ with the function $\K z \in \KP^{n-1} \mapsto \varphi(\frac{z}{|z|})$. An equivalent definition of the norm of $\varphi \in H_n$ is the smallest number such that for every $z \in \K^n$
\begin{equation}\label{eq:def_norm_Hn} |\varphi(z)| \leq (|z_1|^p + \dots+|z_n|^p) \|\varphi\|.\end{equation}


We encode a class $A \subset \Banach$ of Banach spaces by the cone $N(A,n) \subset H_n$ (N for norms) of functions of the form $z \mapsto \| \sum_{i=1}^n z_i x_i\|^p$ for $X \in A$ and $x_1,\dots,x_n \in X$.

When $(\Omega,m)$ is a measure space and $f=(f_1,\dots,f_n)$ is an $n$-uple of elements of $L_p(\Omega,m)$, we can define a continuous linear form $\mu_f$ on $H_n$ by
\begin{equation}\label{eq:def_muf}\dual{\mu_f}{\varphi}= \int \varphi(f_1(\omega),\dots,f_n(\omega)) dm(\omega).\end{equation}
Indeed, it follows from \eqref{eq:def_norm_Hn} that the integral is well-defined and that $\mu_f \in H_n^*$ with norm equal to $\|f_1\|_p^p+ \dots+\|f_n\|_p^p$ (the inequality $\leq$ is immediate from \eqref{eq:def_norm_Hn}, and the equality follows by evaluating $\mu_f$ at the norm $1$ element $z \mapsto |z|^p$ in $H_n$).

We encode a class $B \subset \Op$ of operators by the cone $P(B,n) \subset H_n^*$
\[ P(B,n) = \{ \mu_f - \mu_{Tf}, T \in B \textrm{ and }f \in \dom(T)^n\}\]
where for $f =(f_1,\dots,f_n)\in \dom(T)^n$, we denote $Tf = (Tf_1,\dots,Tf_n)$. It is a cone because for every $t\geq 0$,  $t (\mu_f - \mu_{Tf}) = \mu_{t^{\frac 1 p}f} - \mu_{T t^{\frac 1 p}f}$. 

The crucial but obvious property motivating these definitions is that, if $\varphi(z) = \|\sum_{i=1}^n z_i x_i\|_X^p$ for elements $x_1,\dots,x_n$ in a Banach space $X$, then $\dual{\mu_f}{\varphi} = \|\sum_i f_i x_i\|_{L_p(\Omega,m;X)}^p$. As a consequence,
\[ \dual{\mu_f - \mu_{Tf}}{\varphi} = \|\sum_i f_i x_i\|_{L_p(\Omega,m;X)}^p - \|\sum_i (Tf_i) x_i\|_{L_p(\Omega,m;X)}^p.\]
In particular, we have
\begin{lem}\label{lem:polarity_basic} Let $A \subset \Banach$ be a class of Banach spaces and $B \subset \Op$ a class of operators. 
\begin{enumerate}
\item\label{item:Pofpolarbasic}$B \subset A^\circ$ if and only if for every $n$, $P(B,n) \subset N(A,n)^\circ$. 
\item\label{item:Nofpolarbasic} $A \subset \prep{B}$ if and only if for every $n$, $N(A,n) \subset \prep{P(B,n)}$.
\end{enumerate}
\end{lem}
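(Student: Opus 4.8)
The plan is to unwind both statements directly from the definitions of the encodings $N(A,n)$, $P(B,n)$, and the polarities $A^\circ$, $\prep B$, $N^\circ$, $\prep P$, using the key identity
\[ \dual{\mu_f - \mu_{Tf}}{\varphi} = \|\textstyle\sum_i f_i x_i\|_{L_p(\Omega,m;X)}^p - \|\textstyle\sum_i (Tf_i) x_i\|_{L_p(\Omega,m;X)}^p \]
recorded just above the statement, valid whenever $\varphi(z) = \|\sum_i z_i x_i\|_X^p$. The point is that $\polar T X \le 1$ is, by definition, exactly the assertion that for all $n$, all $f \in \dom(T)^n$ and all $x_1,\dots,x_n \in X$ one has $\|\sum_i (Tf_i) x_i\|^p \le \|\sum_i f_i x_i\|^p$, i.e. that $\dual{\mu_f-\mu_{Tf}}{\varphi}\ge 0$ for every $\varphi \in N(\{X\},n)$.

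For \eqref{item:Pofpolarbasic}: by definition $B \subset A^\circ$ means $\polar T X \le 1$ for every $T \in B$ and every $X \in A$. Fix such $T$ and $X$. Using the identity above, $\polar T X \le 1$ is equivalent to $\dual{\mu_f-\mu_{Tf}}{\varphi}\ge 0$ for all $n$, all $f\in\dom(T)^n$, and all $\varphi$ of the form $z\mapsto\|\sum_i z_i x_i\|_X^p$ with $x_i\in X$ — that is, for all $\varphi$ ranging over the generators of the cone $N(\{X\},n)$, hence (by linearity and continuity, since these generators span a dense sub-cone and $\mu_f-\mu_{Tf}$ is continuous) for all $\varphi \in N(\{X\},n)$. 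Now an element $\mu_f-\mu_{Tf}$ of $P(B,n)$ lies in $N(A,n)^\circ$ iff it pairs nonnegatively with every generator $z\mapsto\|\sum_i z_i x_i\|^p$, $X\in A$, $x_i\in X$ (recall that for a cone the polar is defined by the nonnegativity condition, and that $N(A,n)$ is the cone generated by these functions over all $X\in A$). Assembling over all $n$, all $T\in B$, all $f$, and all $X\in A$, the two conditions "$P(B,n)\subset N(A,n)^\circ$ for all $n$" and "$\polar T X\le 1$ for all $T\in B$, $X\in A$" are literally the same conjunction of inequalities, which gives the claimed equivalence.

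For \eqref{item:Nofpolarbasic}: by definition $A\subset\prep B$ means $\polar T X\le 1$ for every $X\in A$ and every $T\in B$ — the same conjunction of scalar inequalities as before, just quantified in the other order. On the other side, $N(A,n)\subset\prep{P(B,n)}$ means (again using the cone form of $\prep{(\cdot)}$) that every generator $\varphi_{X,\bar x}\colon z\mapsto\|\sum_i z_i x_i\|^p$ with $X\in A$, $x_i\in X$, pairs nonnegatively with every element $\mu_f-\mu_{Tf}$ of $P(B,n)$, $T\in B$, $f\in\dom(T)^n$; by the identity this says exactly $\|\sum_i (Tf_i)x_i\|^p\le\|\sum_i f_i x_i\|^p$. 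Quantifying over all $n$ recovers precisely $\polar T X\le 1$ for all $X\in A$, $T\in B$. So again the two sides are the same family of inequalities.

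There is no real obstacle here; the lemma is a formal bookkeeping statement, and the only points requiring a line of care are (a) that passing from nonnegativity on the spanning generators of the cones $N(A,n)$, respectively $P(B,n)$, to nonnegativity on the whole cone is legitimate — which is immediate since both cones are, by construction, the conic hulls of those generators and the functionals involved are linear (closure is not even needed, since we only test against individual cone elements) — and (b) keeping straight that $X\in\{X\}$ for a single Banach space, so that "$\varphi\in N(\{X\},n)$" is the same as "$\varphi$ is a sum of functions $z\mapsto\|\sum_i z_i x_i\|^p$ with $x_i\in X$". I would write the proof as a single short paragraph handling \eqref{item:Pofpolarbasic} and then remark that \eqref{item:Nofpolarbasic} follows by exactly the same computation with the roles of the two quantifiers exchanged.
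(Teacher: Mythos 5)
Your proof is correct and follows exactly the route the paper intends: Lemma \ref{lem:polarity_basic} is stated in the paper without proof, immediately after the displayed identity $\dual{\mu_f-\mu_{Tf}}{\varphi}=\|\sum_i f_i x_i\|^p-\|\sum_i (Tf_i)x_i\|^p$, precisely because it is the direct unwinding of definitions that you carry out. One small remark on wording: $N(A,n)$ and $P(B,n)$ are not defined as conic (or convex conic) hulls of a generating set — they are literally the sets of functions $z\mapsto\|\sum_i z_i x_i\|^p$ and of functionals $\mu_f-\mu_{Tf}$, which happen to be cones but need not be convex; since the polar is a linear nonnegativity condition and hence unchanged under passing to the convex conic hull, this terminological slip does not affect the argument, but you could simply drop the discussion of "generators" entirely, as the containments $P(B,n)\subset N(A,n)^\circ$ and $N(A,n)\subset\prep{P(B,n)}$ are already quantified over exactly the elements you need.
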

\subsection{Polarity in $H_n$}
We start by improving Lemma~\ref{lem:polarity_basic}. The next result expresses that the polarity in $\dual \Banach \Op$ (see Definition~\ref{defn:polar} and \ref{defn:prepolar}) is well encoded by the polarity $\dual{H_n}{H_n^*}$ (see Subsection~\ref{subsec:polar_linear}). Recall that $REG$ the class of all operators $T \in \Op$ with regular norm $\|T\|_r:=\sup_{X\in \Banach} \|T_X\| \leq 1$. 

\begin{prop}\label{prop:polarity} Let $A \subset \Banach$ be a class of Banach spaces and $B \subset \Op$ a class of operators. Then
\begin{enumerate}
\item\label{item:Pofpolar} $P(A^\circ,n) = N(A,n)^\circ$.
\item\label{item:Nofpolar} $N(\prep{B},n) \subset \prep{P(B \cup REG,n)}$.
\end{enumerate}
\end{prop}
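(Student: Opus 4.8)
The plan is to prove the two statements separately, using Lemma~\ref{lem:polarity_basic} as the starting point and then upgrading the inclusions it provides to the sharper statements claimed here.

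\emph{Proof of \eqref{item:Pofpolar}.} The inclusion $P(A^\circ,n) \subset N(A,n)^\circ$ is exactly statement \eqref{item:Pofpolarbasic} of Lemma~\ref{lem:polarity_basic} applied with $B = A^\circ$ (since $A^\circ \subset A^\circ$ trivially). For the reverse inclusion $N(A,n)^\circ \subset P(A^\circ,n)$, take $\xi \in N(A,n)^\circ \subset H_n^*$; I must exhibit $T \in A^\circ$ and $f = (f_1,\dots,f_n) \in \dom(T)^n$ with $\xi = \mu_f - \mu_{Tf}$. The natural candidate is to take the Jordan-type decomposition of $\xi$ into a difference of positive linear forms on $H_n$ that are themselves of the form $\mu_f$. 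Here I would use the identification $H_n \cong C(\KP^{n-1})$: a positive linear form on $C(\KP^{n-1})$ is a positive measure, and a positive measure $\nu$ on $\KP^{n-1}$ can be realized as $\mu_f$ for a suitable $n$-tuple $f$ of elements of some $L_p$ space — concretely, pulling back $\nu$ to $\K^n$ (after a change of density to make the total mass finite with the right homogeneity) and taking $f_i$ to be the $i$-th coordinate function. So write $\xi = \xi_+ - \xi_-$ with $\xi_\pm$ positive, realize $\xi_+ = \mu_f$ and $\xi_- = \mu_g$ on some common $L_p$ space (using disjoint unions of measure spaces as in the Notation paragraph), and let $T$ be the linear map sending the subspace spanned by $f_1,\dots,f_n$ to the one spanned by $g_1,\dots,g_n$ via $f_i \mapsto g_i$. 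This $T$ is well-defined as a linear map; I then need to check $T \in \Op$ (it is an operator between subspaces of $L_p$ spaces) and, crucially, that $T \in A^\circ$, i.e.\ $\polar T X \leq 1$ for every $X \in A$. That last point is where the hypothesis $\xi \in N(A,n)^\circ$ enters: for $X \in A$ and $x_1,\dots,x_n \in X$, evaluating $\xi$ against $\varphi(z) = \|\sum z_i x_i\|_X^p \in N(A,n)$ gives $\langle \mu_f - \mu_g, \varphi\rangle \geq 0$, which rewrites (by the crucial identity recalled just before Lemma~\ref{lem:polarity_basic}) as $\|\sum (Tf_i) x_i\|_{L_p(\cdot;X)}^p \leq \|\sum f_i x_i\|_{L_p(\cdot;X)}^p$. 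Since the $f_i$ span $\dom(T)$, this is precisely $\polar T X \leq 1$. Hence $T \in A^\circ$ and $\xi \in P(A^\circ,n)$.

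\emph{Proof of \eqref{item:Nofpolar}.} By Lemma~\ref{lem:polarity_basic}\eqref{item:Nofpolarbasic}, it suffices to show $\prep B \subset \prep{(B \cup REG)}$, i.e.\ that every $X \in \prep B$ also satisfies $\polar T X \leq 1$ for all $T \in REG$. But this is immediate from the definition of $REG$: if $\|T\|_r \leq 1$ then $\sup_{Y \in \Banach} \polar T Y \leq 1$, in particular $\polar T X \leq 1$ for our given $X$. So $\prep B = \prep{(B \cup REG)}$, and applying Lemma~\ref{lem:polarity_basic}\eqref{item:Nofpolarbasic} to $B \cup REG$ gives $N(\prep B, n) = N(\prep{(B\cup REG)},n) \subset \prep{P(B \cup REG,n)}$, as claimed.

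\emph{Main obstacle.} The routine part is \eqref{item:Nofpolar}; the substance is the reverse inclusion in \eqref{item:Pofpolar}, and within it the only genuinely non-trivial point is the \emph{realization} step: showing that an arbitrary positive linear form on $H_n$ (equivalently, a finite positive Radon measure on $\KP^{n-1}$) arises as $\mu_f$ for some $n$-tuple $f$ in an $L_p$ space inside the fixed ambient set of measure spaces, and that two such forms can be realized over a common space so that their difference is $\mu_f - \mu_{Tf}$ for a bona fide operator $T$. This presumably is exactly the content of ``it is not hard to prove'' type lemmas established earlier (the discussion around $\mu_f$ and change of density), so I expect to invoke the measure-theoretic preliminaries rather than redo them; the remaining bookkeeping (homogeneity of the pulled-back measure, finiteness of mass, well-definedness of $T$ on the span) is straightforward.
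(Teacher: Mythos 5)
Your argument for \eqref{item:Pofpolar} follows the same route as the paper: identify $H_n^*$ with signed measures on $\KP^{n-1}$, take a Jordan decomposition to write an arbitrary $\xi \in N(A,n)^\circ$ as $\mu_f - \mu_g$ (this is the content of Lemma~\ref{lem:dual_Hn}), and define $T$ by $f_i \mapsto g_i$. But the assertion that ``this $T$ is well-defined as a linear map'' is not automatic: nothing guarantees a priori that $\sum_i c_i f_i = 0$ in $L_p(\Omega,m)$ forces $\sum_i c_i g_i = 0$. This is exactly where the hypothesis $\xi \in N(A,n)^\circ$ must be used \emph{before} one even writes the symbol $Tf_i$, and it requires $A$ to contain at least one nonzero space: for any nonzero $x$ in such an $X \in A$, apply $\langle \xi, \varphi \rangle \geq 0$ with $\varphi(z) = \|\sum_i z_i (c_i x)\|_X^p$, which forces $\|x\|_X^p\,\|\sum_i c_i g_i\|_{L_p}^p \leq \|x\|_X^p\,\|\sum_i c_i f_i\|_{L_p}^p = 0$, hence $\sum_i c_i g_i=0$. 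The paper first disposes of the degenerate case where every space in $A$ is trivial (there $N(A,n)^\circ = H_n^*$ and $A^\circ = \Op$), and then explicitly invokes the existence of a nonzero $X \in A$ to get exactly this implication; your write-up elides both the case distinction and the well-definedness check. Once $T$ exists, your verification that $\polar T X \leq 1$ for all $X \in A$ is correct and the same as the paper's.

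For \eqref{item:Nofpolar} your argument is correct and genuinely different --- and simpler --- than the paper's. You observe directly that $\prep{B} = \prep{(B \cup REG)}$ (adding $REG$ does not shrink the prepolar, since every $T \in REG$ satisfies $\polar T X \leq 1$ for \emph{all} $X$), and then the claimed inclusion is just Lemma~\ref{lem:polarity_basic}\eqref{item:Nofpolarbasic} applied to the class $B \cup REG$. The paper instead proves the intermediate identity $N(\prep{B},n) = \prep{P(B,n)} \cap N(\Banach,n)$, then identifies $N(\Banach,n) = \prep{P(REG,n)}$ using part \eqref{item:Pofpolar} for $A=\Banach$ together with the linear bipolar theorem for the closed convex cone $N(\Banach,n)$, and combines. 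That detour actually establishes \emph{equality} $N(\prep{B},n) = \prep{P(B\cup REG,n)}$, not merely the stated inclusion, but for the proposition as claimed your shorter argument is entirely sufficient.
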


In the proof, we need a description of the dual of $H_n$~:
\begin{lem}\label{lem:dual_Hn} Every continuous linear form $l$ on $H_n$ is of the form $\mu_f-\mu_g$ for some measure spaces $(\Omega,m)$ and $(\Omega',m')$ and $n$-uples $f \in L_p(\Omega,m)^n$ and $g \in L_p(\Omega',m')^n$. Moreover $\Omega,m,f$ and $\Omega',m',g$ can be chosen so that $f$ and $g$ take almost surely their values in $\{z \in \K^n, |z|=1\}$ and so that $m(\Omega)+m'(\Omega')$ is equal to the norm of $l$.
\end{lem}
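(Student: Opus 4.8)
The plan is to identify $H_n$ with $C(\KP^{n-1})$ via the isometric isomorphism described just before the statement, so that by the Riesz representation theorem every continuous linear form $l$ on $H_n$ corresponds to a finite signed (regular Borel) measure $\nu$ on $\KP^{n-1}$ with $\|l\| = \|\nu\|$ the total variation norm. The first step is to take the Jordan decomposition $\nu = \nu_+ - \nu_-$, which by the general fact recalled in Subsection~2.2 satisfies $\|\nu\| = \|\nu_+\| + \|\nu_-\|$. So it suffices to realise each of the two \emph{positive} finite measures $\nu_\pm$ on $\KP^{n-1}$ in the form $\mu_f$ for some $n$-uple $f$ taking values almost surely in the unit sphere $S = \{z \in \K^n : |z| = 1\}$, with total mass of the underlying measure space equal to $\nu_\pm(\KP^{n-1})$.

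The second, and really the only substantive, step is this realisation. Given a positive finite measure $\rho$ on $\KP^{n-1}$, I would choose a Borel section $\sigma \colon \KP^{n-1} \to S$ of the quotient map $S \to \KP^{n-1}$ (which exists because $S \to \KP^{n-1}$ is a continuous surjection between compact metric spaces, e.g.\ by a standard measurable selection theorem, or concretely by normalising a representative with first nonzero coordinate suitably chosen). Set $(\Omega, m) = (\KP^{n-1}, \rho)$ and let $f = (f_1, \dots, f_n)$ with $f_i(\xi) = \sigma(\xi)_i$; then $f_i \in L_p(\Omega,m)$ since $|f_i| \leq 1$ and $\rho$ is finite, $f$ takes values in $S$ by construction, and for $\varphi \in H_n$ identified with $\tilde\varphi \in C(\KP^{n-1})$ via $\tilde\varphi(\K z) = \varphi(z/|z|)$ we get
\[ \dual{\mu_f}{\varphi} = \int_{\KP^{n-1}} \varphi(\sigma(\xi)) \, d\rho(\xi) = \int_{\KP^{n-1}} \tilde\varphi(\xi)\, d\rho(\xi), \]
using that $|\sigma(\xi)| = 1$ so $\varphi(\sigma(\xi)) = \tilde\varphi(\K\sigma(\xi)) = \tilde\varphi(\xi)$. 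Thus $\mu_f$ corresponds to the measure $\rho$, and $m(\Omega) = \rho(\KP^{n-1})$ as required.

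Finally I would assemble the pieces: applying the previous paragraph to $\rho = \nu_+$ gives $(\Omega,m,f)$ with $m(\Omega) = \|\nu_+\|$ and $\mu_f \leftrightarrow \nu_+$, and to $\rho = \nu_-$ gives $(\Omega',m',g)$ with $m'(\Omega') = \|\nu_-\|$ and $\mu_g \leftrightarrow \nu_-$. Then $l$ corresponds to $\nu = \nu_+ - \nu_-$, i.e.\ $l = \mu_f - \mu_g$, and $m(\Omega) + m'(\Omega') = \|\nu_+\| + \|\nu_-\| = \|\nu\| = \|l\|$. The main obstacle is the measurable selection $\sigma$; it is routine but must be stated carefully, since without insisting $f$ take values in $S$ one could just pull back under $S \to \KP^{n-1}$, and the whole point of the ``moreover'' clause is that $\rho$ is already carried on $\KP^{n-1}$, so the section is exactly what lets us land in $L_p$ of a space of the right total mass. (One should also note the measure spaces produced can be taken inside the fixed universe of standard measure spaces from the Notation paragraph, since $\KP^{n-1}$ with a finite Borel measure is standard and hence measurably isomorphic to a subset of $[0,1]$ with a finite measure.)
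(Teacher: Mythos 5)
Your proof is correct and follows essentially the same route as the paper: identify $H_n$ with $C(\KP^{n-1})$, apply Riesz representation, take the Jordan decomposition $\nu = \nu_+ - \nu_-$, and use a measurable section $\KP^{n-1} \to \{z : |z|=1\}$ to realise each $\nu_\pm$ as $\mu_f$ on $(\KP^{n-1},\nu_\pm)$. No meaningful differences.
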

\begin{proof}
By the identification of $H_n$ with $C(\KP^{n-1})$ and by the Riesz representation theorem, every continuous linear form $l$ on $H_n$ is of the form
\[ \varphi \mapsto \int_{\KP^{n-1}} \varphi\left(\frac{z}{|z|}\right) d\nu(\K z)\]
for a unique signed measure $\nu$ on $\KP^{n-1}$, and the norm of $l$ is the total variation of $\nu$. Let $\nu=\nu_+ - \nu_-$ be the Jordan decomposition of $\nu$ and $s\colon \KP^{n-1} \to \{z \in \K^{n},|z|=1\}$ a measurable section. Define $(\Omega,m) = (\KP^{n-1},\nu_+)$ and $f \in L_p(\Omega,m)^n$ by $s(\omega) = (f_1(\omega),\dots,f_n(\omega))$. Similarly define $(\Omega',m') = (\KP^{n-1},\nu_-)$ and $g \in L_p(\Omega',m')^n$ by $s(\omega) = (g_1(\omega),\dots,g_n(\omega))$. Then we have 
\[ \int_{\KP^{n-1}} \varphi\left(\frac{z}{|z|}\right) d\nu(\K z) 
= \dual{\mu_f-\mu_g}{\varphi}.\] This proves the lemma, because by construction $f,g$ both take values in $\{z \in \K^n, |z|=1\}$ and $m(\Omega) + m'(\Omega) = (\nu_+ + \nu_-)(\KP^{n-1})$ is the norm of $l$.
\end{proof}

\begin{proof}[Proof of Proposition \ref{prop:polarity}]
We start by (\ref{item:Pofpolar}). If every space in $A$ is trivial (of dimension $0$), we have $N(A,n)^\circ = H_n^*$, $A^\circ=\Op$, and the result is easy. We can therefore assume that $A$ contains a space of dimension $\geq 1$. Let $f,g$ be $n$-uples in $L_p$ spaces. Note that if $\varphi(z) = \|\sum_{i=1}^n z_i x_i\|^p$ then 
\[ \dual{\mu_f-\mu_g}{\varphi} = \| \sum_i f_i x_i\|_{L_p(X)}^p - \| \sum_i g_i x_i\|_{L_p(X)}^p.\] 
So the linear form $\mu_f - \mu_g \in H_n^*$ belongs to $N(A,n)^\circ$ if and only if for every $X \in A$ and $x_1,\dots,x_n \in X$, $\|\sum f_i x_i \|_{L_p(X)}^p \geq \|\sum g_i x_i\|_{L_p(X)}^p$. Using that there is a nonzero $X \in A$, this holds if and only if there is a linear map $T$ sending $f_i$ to $g_i$ such that $T \in A^\circ$. This shows that $\mu_f - \mu_g$ belongs to $N(A,n)^\circ$ if and only if it belongs to $P(A^\circ,n)$. By Lemma \ref{lem:dual_Hn} every element of $H_n^*$ is of this form, which proves (\ref{item:Pofpolar}).

We move to (\ref{item:Nofpolar}). Denote by $C_n$ the closed convex cone $ C_n=N(\Banach,n)$. We first prove that $N(\prep{B},n) = \prep{P(B,n)} \cap C_n$. By definition $N(\prep{B},n) \subset C_n$. So we have to prove that for $\varphi \in C_n$, $\varphi \in N(\prep{B},n)$ if and only if $\varphi \in \prep{P(B,n)}$. But if $\varphi(z) = \|\sum_{i=1}^n z_i x_i\|^p$ and $X=\vect(x_1,\dots,x_n)$, then we have that $\varphi \in N(B^\circ,n)$ if and only if $\|T\otimes id_X\|\leq 1$ for all $T \in B$, if and only if for all $T \in B$ and $f_1,\dots,f_n \in \dom(T)$, $\|\sum_i T f_i x_i\|^p \leq \|\sum_i f_i x_i\|^p$, if and only if $\varphi \in P(B,n)^\circ$.

We can now conclude with  (\ref{item:Nofpolar}). By  (\ref{item:Pofpolar}) for $A = \Banach$, we have $C_n^\circ = P(REG,n)$. On the other hand, since $C_n$ is a closed convex cone, the bipolar theorem implies that $C_n = \prep{C_n^\circ}$, and hence $C_n = \prep{P(REG,n)}$. We therefore get
\begin{align*}N(\prep{B},n) &= \prep{P(B,n)} \cap \prep{P(REG,n)}\\
  & = \prep{(P(B,n) \cup P(REG,n))} \\
  & = \prep{P(B \cup REG,n)}.
\end{align*}
This proves (\ref{item:Nofpolar}).
\end{proof}
By the bipolar theorem in $H_n$ and $H_n^*$, we obtain
\begin{cor}\label{cor:bipolars}
Let $A \subset \Banach$ be a class of Banach spaces and $B \subset \Op$ a class of operators. Then 
\begin{enumerate}
\item $N(\prep{A}^\circ,n) = \overline{\mathrm{conv}}N(A,n)$.
\item $P(\prep B^\circ,n) = \overline{\mathrm{conv}}^{w*} P(B \cup REG,n)$.
\end{enumerate}
\end{cor}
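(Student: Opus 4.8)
The plan is to derive both identities from Proposition~\ref{prop:polarity} and the two forms of the linear bipolar theorem (Theorem~\ref{thm:bipolar_Banach}) in the Banach space $H_n$ and in its dual $H_n^*$; no new idea is needed beyond what is already contained in the proof of Proposition~\ref{prop:polarity}. Throughout I write $\prep{(A^\circ)}$ for the bipolar of a class $A\subset\Banach$ and $(\prep{B})^\circ$ for the bipolar of a class $B\subset\Op$. The key observation is that the proof of Proposition~\ref{prop:polarity}(\ref{item:Nofpolar}) in fact establishes the \emph{equality} $N(\prep{B},n)=\prep{P(B\cup REG,n)}$ for every $B\subset\Op$ (even though the statement only records the inclusion), together with the identity $P(REG,n)=C_n^\circ$, where $C_n=N(\Banach,n)$ is a closed convex cone.

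For (1), I would apply the equality $N(\prep{B},n)=\prep{P(B\cup REG,n)}$ with $B=A^\circ$. Since $P$ turns a union of classes of operators into a union of cones, this reads
\[ N(\prep{(A^\circ)},n)=\prep{\bigl(P(A^\circ,n)\cup P(REG,n)\bigr)}=\prep{P(A^\circ,n)}\cap\prep{P(REG,n)}.\]
By Proposition~\ref{prop:polarity}(\ref{item:Pofpolar}) one has $P(A^\circ,n)=N(A,n)^\circ$, so the norm bipolar theorem in $H_n$ gives $\prep{P(A^\circ,n)}=\overline{\mathrm{conv}}\bigl(N(A,n)\cup\{0\}\bigr)=\overline{\mathrm{conv}}\,N(A,n)$, the last step because $N(A,n)$ is a cone and hence already contains $0$. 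Likewise $\prep{P(REG,n)}=\prep{(C_n^\circ)}=C_n$ because $C_n$ is a closed convex cone. Finally $\overline{\mathrm{conv}}\,N(A,n)\subset C_n$ since $N(A,n)\subset N(\Banach,n)=C_n$ and $C_n$ is closed and convex, so the displayed intersection collapses to $\overline{\mathrm{conv}}\,N(A,n)$, which is (1).

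For (2), I would instead use Proposition~\ref{prop:polarity}(\ref{item:Pofpolar}) with $A=\prep{B}$, namely $P\bigl((\prep{B})^\circ,n\bigr)=N(\prep{B},n)^\circ$, and substitute $N(\prep{B},n)=\prep{P(B\cup REG,n)}$. This yields $P\bigl((\prep{B})^\circ,n\bigr)=\bigl(\prep{P(B\cup REG,n)}\bigr)^\circ$, and the weak-$*$ form of the bipolar theorem (the second statement of Theorem~\ref{thm:bipolar_Banach}) identifies the right-hand side with the weak-$*$ closure of the convex hull of $P(B\cup REG,n)\cup\{0\}$. Since $P(B\cup REG,n)$ is a cone containing $0$ (for instance $0\in P(REG,n)$, as $REG$ contains the identity operators), this is exactly $\overline{\mathrm{conv}}^{w*}P(B\cup REG,n)$, proving (2).

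The argument is essentially bookkeeping, and the only points deserving attention rather than being genuine obstacles are: extracting from the proof of Proposition~\ref{prop:polarity} the precise equality $N(\prep{B},n)=\prep{P(B\cup REG,n)}$ (it is the conclusion of that proof, not merely the inclusion stated); the harmless replacement of ``closed convex hull of $C\cup\{0\}$'' by ``closed convex hull of $C$'' for the cones $C$ in play; and checking in (1) that intersecting with $C_n$ is vacuous. In particular Proposition~\ref{prop:bipolar_with_compactness} is not invoked here; it enters only afterwards, to upgrade the weak-$*$ closure appearing in (2) to a norm closure.
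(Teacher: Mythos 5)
Your proof is correct and matches what the paper leaves to the reader behind its one-line invocation of the bipolar theorem after Proposition~\ref{prop:polarity}. The one genuinely useful observation you make explicit --- that the proof of Proposition~\ref{prop:polarity}(\ref{item:Nofpolar}) in fact establishes the equality $N(\prep{B},n)=\prep{P(B\cup REG,n)}$, not merely the stated inclusion --- is exactly what is needed for both items, and the remaining bookkeeping about cones, $\{0\}$, and the vacuity of intersecting with $C_n$ is handled correctly.
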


The rest of this section consists in understanding the closed convex hulls of $N(A,n)$ and $P(B,n)$.

\subsection{Understanding the encoding of Banach spaces in $H_n$}
The following easy fact will be important later.
\begin{lem}\label{lem:compactnessNAn}For every integer $n$, bounded subsets of $N(\Banach,n)$ are relatively norm-compact.
\end{lem}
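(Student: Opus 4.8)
The plan is to apply the Arzel\`a--Ascoli theorem through the isometric identification $H_n \cong C(\KP^{n-1})$ recalled above: the subset is bounded by hypothesis, so it is enough to prove that it is equicontinuous, and in fact I will show that it is \emph{equi-Lipschitz} for the natural metric on $\KP^{n-1}$, with a Lipschitz constant depending only on $n$ and on the bound $R$ on the norms.

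First I would translate the problem into a statement about seminorms on $\K^n$. Let $\varphi \in N(\Banach,n)$ with $\|\varphi\|\le R$, and write $\varphi(z) = \|\sum_{i=1}^n z_i x_i\|_X^p$ for a Banach space $X$ and $x_1,\dots,x_n \in X$. Set $\psi(z) := \|\sum_i z_i x_i\|_X = \varphi(z)^{1/p}$, which is a seminorm on $\K^n$. By the defining inequality \eqref{eq:def_norm_Hn} of the norm of $H_n$, $\psi(z) = \varphi(z)^{1/p} \le \|\varphi\|^{1/p}\,|z| \le R^{1/p}\,|z|$ for every $z \in \K^n$. Extracting this uniform linear bound is the one point that really matters: it makes the whole family $\{\psi\}$ equi-Lipschitz, since for a seminorm the reverse triangle inequality gives $|\psi(z)-\psi(w)| \le \psi(z-w) \le R^{1/p}\,|z-w|$ for all $z,w$.

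Next I would pass from $\psi$ back to $\varphi = \psi^p$. On the compact ball $\{|z|\le 1\}$ the function $\psi$ takes values in $[0,R^{1/p}]$, and $t\mapsto t^p$ is $pR^{(p-1)/p}$-Lipschitz on that interval because $p\ge 1$; hence $\varphi$ is $pR$-Lipschitz on $\{|z|\le 1\}$, with a constant independent of $X$ and of $x_1,\dots,x_n$. Using that $\varphi$ is invariant under $z\mapsto \lambda z$ for $|\lambda|=1$ (as $\varphi(\lambda z)=|\lambda|^p\varphi(z)$), this Lipschitz estimate transfers to the function $\K z \mapsto \varphi(z/|z|)$ on $\KP^{n-1}$: it is $pR$-Lipschitz for the quotient metric $d(\K z,\K w) = \inf_{|\lambda|=1}\bigl|\tfrac{z}{|z|}-\lambda\tfrac{w}{|w|}\bigr|$, uniformly over all $\varphi\in N(\Banach,n)$ with $\|\varphi\|\le R$. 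So the given bounded set is a pointwise bounded, equicontinuous subset of $C(\KP^{n-1})$, and Arzel\`a--Ascoli yields that its closure is compact, i.e.\ the set is relatively norm-compact in $H_n$. All of this is routine; there is no genuine obstacle, the only mild care being the last transfer step from $\{|z|\le 1\}\subset\K^n$ to the compact metric space $\KP^{n-1}$.
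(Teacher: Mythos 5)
Your proof is correct and follows essentially the same route as the paper: Arzelà--Ascoli, equicontinuity from the (reverse) triangle inequality applied to the seminorm $\psi=\varphi^{1/p}$, and then the Lipschitz bound for $t\mapsto t^p$ on a bounded interval to pass back to $\varphi$. You simply make the constants and the final transfer to $\KP^{n-1}$ explicit, which the paper leaves to the reader.
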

\begin{proof} By the Arzel\`a-Ascoli theorem, we have to prove that bounded subsets of $N(\Banach,n)$ are equicontinuous, seen in $C(\KP^{n-1})$. This follows from the triangle inequality. For example for $p=1$ and $\varphi(z) = \|\sum_i z_i x_i\|$, then we have
  \[ |\varphi(z) - \varphi(z')| \leq \| \sum_i (z_i - z'_i) x_i\| \leq \sum_i |z_i-z'_i| \varphi(e_i).\]
The case of arbitrary $p$ is similar. Alternatively, it follows from the case $p=1$ by continuity of the map $t \mapsto t^p$.
\end{proof}
Lemma~\ref{lem:compactnessNAn} allows to considerably strengthen the second statement in Corollary~\ref{cor:bipolars}, replacing weak-* closure by norm closure.
\begin{cor}\label{cor:bipolarsbis}
  Let $B \subset \Op$ a class of operators. Then
  \[P(\prep B^\circ,n) = \overline{\mathrm{conv}}^{\|\cdot\|} P(B \cup REG,n).\]
\end{cor}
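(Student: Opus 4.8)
The plan is to upgrade the weak-* closure in Corollary~\ref{cor:bipolars}(2) to a norm closure by applying Proposition~\ref{prop:bipolar_with_compactness} to the Banach space $E = H_n$, its dual $E^* = H_n^*$, and the cone $C = P(B \cup REG,n) \subset H_n^*$. For this I need to exhibit a convex subset $A \subset H_n$ whose intersection with each ball of radius $r$ is norm-compact and such that $A^\circ \subset P(B \cup REG,n)$. The natural candidate is $A = C_n = N(\Banach,n)$, the closed convex cone of functions $z \mapsto \|\sum_i z_i x_i\|^p$. Being a cone, it is convex, and Lemma~\ref{lem:compactnessNAn} says precisely that bounded subsets of $N(\Banach,n)$ are relatively norm-compact; since $C_n$ is norm-closed, $C_n \cap \{\|\varphi\| \leq r\}$ is actually norm-compact for every $r > 0$. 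Finally, $C_n^\circ = N(\Banach,n)^\circ = P(REG,n)$ by Proposition~\ref{prop:polarity}(1) applied with $A = \Banach$, and $P(REG,n) \subset P(B \cup REG,n) = C$ trivially. So the hypotheses of Proposition~\ref{prop:bipolar_with_compactness} are met.

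Concretely, I would write: Set $C = P(B \cup REG,n)$, which is a cone in $H_n^*$, and $A = N(\Banach,n) \subset H_n$, which is a convex cone. By Lemma~\ref{lem:compactnessNAn} together with the fact that $N(\Banach,n)$ is norm-closed, the set $A \cap \{\varphi \in H_n \mid \|\varphi\| \leq r\}$ is norm-compact for every $r > 0$. By Proposition~\ref{prop:polarity}(1) with $A = \Banach$ we have $A^\circ = N(\Banach,n)^\circ = P(REG,n)$, and $P(REG,n) \subset P(B \cup REG,n) = C$. Hence Proposition~\ref{prop:bipolar_with_compactness} applies and gives that $(\prep C)^\circ$ equals the norm closure of $\mathrm{conv}(C)$. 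On the other hand, by Corollary~\ref{cor:bipolars}(2), $(\prep C)^\circ = \overline{\mathrm{conv}}^{w*} P(B \cup REG,n) = P(\prep B^\circ,n)$. Combining the two identities yields $P(\prep B^\circ,n) = \overline{\mathrm{conv}}^{\|\cdot\|} P(B \cup REG,n)$, as claimed.

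One small wrinkle to double-check is that Corollary~\ref{cor:bipolars}(2) is stated for the weak-* closed convex hull of $P(B \cup REG,n)$ and identifies it with $P(\prep B^\circ,n)$; I should make sure the intermediate object $(\prep{(P(B\cup REG,n))})^\circ$ in the linear duality $\langle H_n, H_n^*\rangle$ is literally the same as what appears there. By Theorem~\ref{thm:bipolar_Banach} the weak-* bipolar of the cone $C$ is its weak-* closed convex hull (here $0 \in C$ already since $0 \in A^\circ$), so $(\prep C)^\circ = \overline{\mathrm{conv}}^{w*} C$, and this matches the right-hand side of Corollary~\ref{cor:bipolars}(2). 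So no genuine obstacle arises; the only content is verifying the compactness hypothesis of Proposition~\ref{prop:bipolar_with_compactness}, which is exactly Lemma~\ref{lem:compactnessNAn}, and checking $A^\circ \subset C$, which is Proposition~\ref{prop:polarity}(1). The main point to get right in the writeup is simply invoking the correct earlier statements in the correct order; there is essentially nothing to compute.
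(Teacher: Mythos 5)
Your proof is correct and follows essentially the same route as the paper: apply Proposition~\ref{prop:bipolar_with_compactness} with $A = N(\Banach,n)$, using Lemma~\ref{lem:compactnessNAn} for compactness and Proposition~\ref{prop:polarity}(1) for $A^\circ = P(REG,n) \subset P(B \cup REG,n)$, then identify the resulting bipolar with $P(\prep B^\circ,n)$ via Corollary~\ref{cor:bipolars}(2). The extra step you spell out — matching $(\prep C)^\circ$ with the weak-* closed convex hull appearing in Corollary~\ref{cor:bipolars}(2) — is left implicit in the paper's shorter write-up but is exactly the right check.
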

\begin{proof} The set $N(\Banach,n)$ is a closed convex cone in $H_n$, so by Lemma~\ref{lem:compactnessNAn} $N(\Banach,n) \cap \{\varphi \in H_n \mid \|\varphi\|\leq r\}$ is norm-compact for every $r$. Moreover, we have that $N(\Banach,n)^\circ = P(REG,n)$ by Proposition~\ref{prop:polarity}. So, since $P(B \cup REG,n)$ contains $N(\Banach,n)^\circ$, Proposition~\ref{prop:bipolar_with_compactness} implies that its bipolar is equal to the norm closure of its convex hull.
  \end{proof}

Let us list elementary properties of $N$.
\begin{lem}\label{lem:understandN} Let $A,A_1,A_2 \subset \Banach$ be classes of Banach spaces. 

\begin{enumerate}
\item\label{item:Ncontainement} $N(A_1,n) \subset N(A_2,n)$ if and only if, for every $X \in A_1$, every subspace of dimension $\leq n$ of $X$ is isometric to a subspace of a space in $A_2$. 
\item The convex hull of $N(A,n)$ is equal to $N(\oplus_{\ell_p} A,n)$, where $\oplus_{\ell_p}A$ denotes the set of all finite $\ell_p$-direct sums of Banach spaces in $A$.
\item\label{item:closure} The norm closure of $N(A,n)$ in $H_n$ coincides with $N(\overline A,n)$ where $\overline A$ denotes the set of Banach spaces finitely represented in $A$.
\end{enumerate}
\end{lem}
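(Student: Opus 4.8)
The plan is to treat the three assertions separately, each being a more or less direct unwinding of the definition $N(A,n) = \{z \mapsto \|\sum z_i x_i\|^p : X \in A,\ x_1,\dots,x_n \in X\}$ together with the characterisation \eqref{eq:def_norm_Hn} of the norm on $H_n$.

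For \eqref{item:Ncontainement}, I would observe that a function $\varphi \in N(A_1,n)$ is, by definition, of the form $z \mapsto \|\sum_i z_i x_i\|_X^p$ for some $X \in A_1$ and $x_1,\dots,x_n \in X$, hence it depends only on the isometric type of the (at most $n$-dimensional) subspace $E = \operatorname{span}(x_1,\dots,x_n)$ of $X$ together with the choice of the spanning tuple $(x_1,\dots,x_n)$. Conversely, the data of $\varphi$ recovers this subspace up to isometry: the formula $\|\sum_i z_i x_i\| = \varphi(z)^{1/p}$ defines a norm on $\K^n / \ker$, and $E$ is isometric to this quotient. Therefore $N(A_1,n) \subseteq N(A_2,n)$ if and only if every such $\varphi$ arising from a space in $A_1$ also arises from a space in $A_2$, which is exactly the statement that every $\leq n$-dimensional subspace of every $X \in A_1$ embeds isometrically into some space in $A_2$. (One direction is immediate; for the other, given a subspace $E \subseteq X \in A_1$ of dimension $k \le n$, pick a spanning $n$-tuple — e.g.\ a basis padded with zeros — to produce the relevant $\varphi \in N(A_1,n)$, then read off the embedding of $E$ from the hypothesis that $\varphi \in N(A_2,n)$.)

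For the statement about the convex hull: the key computation is that if $\varphi_j(z) = \|\sum_i z_i x_i^{(j)}\|_{X_j}^p$ for $j=1,\dots,m$ and $t_1,\dots,t_m \ge 0$, then on the $\ell_p$-direct sum $Y = \bigoplus_{\ell_p}^{j} X_j$, with $y_i = (t_1^{1/p} x_i^{(1)}, \dots, t_m^{1/p} x_i^{(m)}) \in Y$, one has $\|\sum_i z_i y_i\|_Y^p = \sum_j t_j \|\sum_i z_i x_i^{(j)}\|_{X_j}^p = \sum_j t_j \varphi_j(z)$; this shows every convex combination (indeed every nonnegative combination, but $N(A,n)$ is already a cone) of elements of $N(A,n)$ lies in $N(\bigoplus_{\ell_p} A, n)$, and conversely every element of $N(\bigoplus_{\ell_p} A,n)$ is visibly such a combination by reversing this identity. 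Hence $\operatorname{conv} N(A,n) = N(\bigoplus_{\ell_p} A,n)$.

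For \eqref{item:closure}, I would use the description of finite representability in terms of closures in the Banach--Mazur compactum $Q(n)$: since \eqref{eq:def_norm_Hn} identifies $H_n$ isometrically with $C(\KP^{n-1})$ and the map sending a (pointed) $n$-dimensional normed space to the corresponding $\varphi \in N(\Banach,n)$ is a homeomorphism onto its image when $Q(n)$ is given the Banach--Mazur topology and $N(\Banach,n)$ the norm topology (here Lemma \ref{lem:compactnessNAn} is used to know the ambient pieces are compact and that norm convergence of the $\varphi$'s corresponds to convergence of the normalised tuples, hence of the spaces in $Q$), one gets that $\overline{N(A,n)}^{\|\cdot\|}$ consists exactly of the $\varphi$ coming from spaces in the $Q(n)$-closure of the $n$-dimensional subspaces of spaces in $A$ — that is, from subspaces of spaces finitely representable in $A$, i.e.\ from $\overline{A}$. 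The one subtlety to handle carefully (and the step I expect to be the main obstacle) is the bookkeeping for tuples that are not bases — degenerate or lower-dimensional tuples — where $\varphi$ vanishes on a subspace of $\K^n$: there the correspondence "$\varphi \leftrightarrow$ space" is only at the level of the quotient $\K^n/\ker$, and one must check that norm-convergence of $\varphi_j \to \varphi$ still forces the corresponding subspaces (of the possibly varying ranks) to converge appropriately in $\bigsqcup_{k \le n} Q(k)$, so that a limit $\varphi$ genuinely comes from a finitely-represented space and not from something spurious; a direct $\varepsilon$-argument using \eqref{eq:def_norm_Hn} to pass between uniform closeness of the $\varphi$'s and $(1+\varepsilon)$-closeness of the norms, restricted to a compact fundamental domain of the unit sphere of $\K^n$, resolves this. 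The remaining verifications are routine.
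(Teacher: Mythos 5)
Your proposal is correct and follows essentially the same route as the paper: parts (1) and (2) are the same observations (the function $\varphi$ determines the pointed span up to isometry; the $\ell_p$-direct sum computation, with the cone property absorbing the scalars), and for part (3) the paper carries out exactly the $\varepsilon$-argument you sketch, by identifying $\varphi^{1/p}$ as a limiting seminorm, extracting a basis $x_1,\dots,x_m$ of the limit space, and using linear independence to get $(1\pm\varepsilon)\varphi(z,0)\leq\varphi_k(z,0)\leq(1+\varepsilon)\varphi(z,0)$ on $\K^m\times\{0\}$, from which the approximate embedding is built. One small imprecision in your write-up: the map you describe as a ``homeomorphism onto its image'' from $Q(n)$ to $N(\Banach,n)$ is not defined on $Q(n)$ itself (it depends on the spanning tuple, not just the isometry class), but you flag precisely the degenerate-tuple issue this hides, and your proposed $\varepsilon$-argument is the correct resolution and is what the paper does.
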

As a consequence of (\ref{item:Ncontainement}) and (\ref{item:closure}), if two classes of Banach spaces $A_1, A_2$ are closed under finite representability, then $A_1=A_2$ if and only if $N(A_1,n) = N(A_2,n)$ for all $n$.
\begin{proof}
The first point is obvious from the following observation~: if $x_1,\dots,x_n$ (respectively $y_1,\dots,y_n$) are elements in a Banach space $X$ (respectively in a Banach space $Y$), then the functions $z \mapsto \| \sum_{i=1}^n z_i x_i\|^p$ and $z \mapsto \| \sum_{i=1}^n z_i y_i\|^p$ coincide if and only if there is an isometry from the linear span of $\{x_1,\dots,x_n\}$ to the linear span of $\{y_1,\dots,y_n\}$ sending $x_i$ to $y_i$.

If $\varphi_1,\dots,\varphi_k \in N(A,n)$ are given by $\varphi_j(z) = \| \sum_{i=1}^n z_i x_{i}^{(j)}\|_{X_j}^p$ then by the definition of the $\ell_p$-direct sum $X_1 \oplus_p \dots \oplus_p X_k$ we can write
\[ \sum_{j=1}^k \varphi_j(z) = \| \sum_{i=1}^n z_i (x_{i}^{(j)})_{1 \leq j \leq k} \|_{X_1 \oplus_p \dots \oplus_p X_k}^p.\]
This shows that $N(\oplus_{\ell_p} A,n)$ coincides with \[\{\varphi_1+ \dots+\varphi_k, k \in \N, \varphi_j \in N(A,n)\}.\] This is the convex hull of $N(A,n)$ because $N(A,n)$ is a cone.

We move to (\ref{item:closure}). If a sequence $\varphi_k \in N(A,n)$ converges uniformly on compact subsets to $\varphi \in H_n$, then $\varphi^{\frac 1 p}$  is the uniform limit on compact sets of the seminorms $\varphi_k^{\frac  1 p }$, so it is a seminorm on $\K^n$. This means that there is a Banach space $X \in \Banach$ and $x_1,\dots,x_n$ spanning $X$ such that $\varphi(z) = \| \sum_{i=1}^n z_i x_i\|^p$. The family $x_1,\dots,x_n$ might not be linearly independant, so we extract from it a basis of $X$. Without loss of generality we can assume that this basis is $x_1,\dots,x_m$ for some $m \leq n$. Write 
\[\varphi_k(z) = \| \sum_{i=1}^n z_i x_i^{(k)}\|_{X_k}^p\] 
for some $X_k \in A$ and $x_1^{(k)},\dots,x_n^{(k)}  \in X_k$. From the assumption that $\varphi_k$ converges uniformly on compacta to $\varphi$ and the assumption that $x_1,\dots,x_m$ is linearly independant, we get that for every $\varepsilon>0$ there is $k$ such that 
\[(1-\varepsilon) \varphi(z,0) \leq \varphi_k(z,0) \leq (1+\varepsilon) \varphi(z,0)\] for all $z \in \K^m$. This means that the linear map $u \colon X \to X_k$ sending $x_i$ to $(1-\varepsilon)^{-\frac 1 p} x_i^{(k)}$ for $i \leq m$ satisfies \[ \|x\| \leq \|u(x)\|\leq (\frac{1+\varepsilon}{1-\varepsilon})^{\frac 1 p} \|x\| \textrm{ for all }x \in X.\] Since $\varepsilon>0$ was arbitrary we have proved that $X$ is finitely representable in $A$, \emph{i.e.} that $\varphi \in N(\overline A,n)$. This proves that $\overline{N(A,n)}\subset N(\overline A,n)$. The converse inclusion is proved by reading the preceding argument backwards.
\end{proof}

We can conclude our proof of Hernandez' theorem.
\begin{proof}[Proof of Theorem \ref{thm:hernandez}] Let $A'$ be the class of Banach spaces which are finitely representable in the class of $\ell_p$-direct sums of spaces in $A$. It follows from Corollary \ref{cor:bipolars} and Lemma \ref{lem:understandN} that for every integer $n$,
\[ N(\prep{A}^\circ,n) = N(A',n).\]
Since both $\prep{A}^\circ$ and $A'$ are closed under finite representability, we get the equality $\prep{A}^\circ = A'$ by the remark following Lemma  \ref{lem:understandN}.
\end{proof}

\subsection{Proof of Theorem \ref{thm:hernandez_isom}}

If $X$ is at Banach-Mazur $\leq C$ from $\prep{A}^\circ$; then the inequality $\|T_X\| \leq C$ for every $T\in A^\circ$ is clear.

For the converse, we will need the following consequence of the Hahn-Banach theorem.
\begin{lem}\label{lem:polar_C(K)} Let $K$ be a compact Hausdorff topological space, and $C(K)$ the space of real-valued continuous functions on $K$. Let $A$ be a closed convex cone in the positive cone of $C(K)$ such that $A \cap B(0,1)$ is compact. Let $s\geq 1$. Then for every $\psi \in C(K)$, the following are equivalent
  \begin{itemize}
  \item $\exists \varphi \in A, \psi \leq \varphi \leq s \psi$.
  \item  $\dual{s \mu - \nu}{\psi} \geq 0$ for every positive measures $\mu,\nu$ on $K$ such that $\dual{\mu-\nu}{\varphi} \geq 0$ for all $\varphi \in A$.
    \end{itemize}
  \end{lem}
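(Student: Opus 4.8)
The plan is to reformulate the statement as a separation/bipolar assertion in the Banach space $C(K)$ and deduce it from Proposition~\ref{prop:bipolar_with_compactness}. First I would dispose of the trivial implication: if $\varphi \in A$ satisfies $\psi \le \varphi \le s\psi$, then for positive measures $\mu,\nu$ with $\dual{\mu-\nu}{\varphi'}\ge 0$ for all $\varphi'\in A$, we get in particular $\dual{\mu-\nu}{\varphi}\ge 0$, i.e. $\dual{\mu}{\varphi}\ge\dual{\nu}{\varphi}$; combining with $\varphi\le s\psi$ (so $\dual{\mu}{s\psi}\ge\dual{\mu}{\varphi}$ since $\mu\ge 0$) and $\psi\le\varphi$ (so $\dual{\nu}{\varphi}\ge\dual{\nu}{\psi}$ since $\nu\ge 0$) yields $\dual{s\mu-\nu}{\psi} = s\dual{\mu}{\psi}-\dual{\nu}{\psi} \ge \dual{\mu}{\varphi}-\dual{\nu}{\varphi}\ge 0$, as desired. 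Note this direction did not use compactness of $A\cap B(0,1)$; that hypothesis is needed only for the converse.

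For the converse, the idea is contrapositive: assume no $\varphi\in A$ satisfies $\psi\le\varphi\le s\psi$, and produce positive measures $\mu,\nu$ with $\dual{\mu-\nu}{\varphi}\ge 0$ on $A$ but $\dual{s\mu-\nu}{\psi}<0$. Consider the set
\[
  D = \{ \varphi - \psi' \mid \varphi \in A,\ \psi \le \psi' \le s\psi \} \subset C(K),
\]
i.e. $D = A - [\psi, s\psi]$, where $[\psi,s\psi]$ denotes the order interval. The hypothesis says precisely that $0 \notin D$. One checks that $D$ is convex (as a sum of two convex sets). The key point is to apply a separation theorem: find $L \in C(K)^* = M(K)$, the signed Radon measures, with $\dual{L}{d} \ge 0$ for all $d \in D$ and $\dual{L}{0}$... — since $0\notin D$ one wants strict separation, so I would instead argue that $D$ is \emph{norm-closed} (using here that $A\cap B(0,1)$ is compact, exactly as in the proof of Proposition~\ref{prop:bipolar_with_compactness}: $A$ has norm-compact balls, $[\psi,s\psi]$ is norm-compact, hence their difference is norm-closed — indeed $D\cap B(0,r)$ is contained in the image of a compact set under subtraction), so Hahn–Banach separates the point $0$ from the closed convex set $D$ strictly: there is a signed measure $\sigma$ and $\delta>0$ with $\dual{\sigma}{d}\ge\delta$ for all $d\in D$, i.e. $\dual{\sigma}{\varphi} - \dual{\sigma}{\psi'} \ge \delta$ for all $\varphi\in A$ and all $\psi'$ with $\psi\le\psi'\le s\psi$. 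Taking $\varphi=0\in A$ and $\psi'=\psi$ gives $-\dual{\sigma}{\psi}\ge\delta$, so $\dual{\sigma}{\psi}<0$; scaling $\varphi$ by $t\to\infty$ (since $A$ is a cone) forces $\dual{\sigma}{\varphi}\ge 0$ on all of $A$; and varying $\psi'$ over the order interval forces $\dual{\sigma}{\psi'}\le\dual{\sigma}{\psi}$ for all $\psi\le\psi'\le s\psi$, which by taking $\psi'=s\psi$ and $\psi'=\psi$ separately extracts sign information on the two Jordan parts of $\sigma$.

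The last step is to convert $\sigma$ into the pair $(\mu,\nu)$. Write the Jordan decomposition $\sigma = \sigma_+ - \sigma_-$. From $\dual{\sigma}{\varphi}\ge 0$ for all $\varphi\in A$ we read $\dual{\sigma_+ - \sigma_-}{\varphi}\ge 0$ on $A$. I expect the correct choice to be $\mu = \frac{1}{s}\sigma_+ + (\text{correction})$ and $\nu = \sigma_- + (\text{correction})$, engineered so that $s\mu - \nu$ has negative pairing with $\psi$ while $\mu - \nu$ stays in the dual cone $A^\circ$; the freedom in the order-interval condition $\dual{\sigma}{\psi'}\le\dual{\sigma}{\psi}$ for $\psi\le\psi'\le s\psi$ is exactly what pins down how the two parts interact with $\psi$. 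The main obstacle I anticipate is this bookkeeping: matching the asymmetric roles of $s\mu$ versus $\nu$ in the conclusion against the symmetric-looking $\mu-\nu\in A^\circ$ constraint, and verifying that the separating functional can be normalized so that the resulting $\mu,\nu$ are genuinely positive. Everything else — convexity, norm-closedness of $D$ via the compactness hypothesis, and the Hahn–Banach application — is routine and parallels the argument already given for Proposition~\ref{prop:bipolar_with_compactness}.
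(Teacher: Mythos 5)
Your overall strategy --- encode the question as a separation problem in $C(K)$, use the norm-compactness of $A\cap B(0,1)$ to get a \emph{norm-closed} convex set so that Hahn--Banach applies, then read off $(\mu,\nu)$ from a Jordan decomposition --- is the same as the paper's, and your easy implication is correct and identical to the one given there. The paper works with the convex cone $B=\{\psi'\in C(K) : \exists \varphi\in A,\ \psi'\le\varphi\le s\psi'\}$ and separates the given $\psi$ from $B$, whereas you work with $D=A-[\psi,s\psi]$ and separate $0$ from $D$; these are equivalent reformulations ($\psi\in B$ iff $0\in D$), so the change of convex set is cosmetic rather than substantive. Where your proposal genuinely falls short is in the two places you yourself flag as ``routine'' and ``anticipated bookkeeping''.

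First, your closedness argument rests on a false premise: the order interval $[\psi,s\psi]$ in $C(K)$ is bounded and closed but \emph{not} norm-compact when $K$ is infinite (e.g.\ $\{f\in C[0,1]: 0\le f\le 1\}$ contains $f_n(x)=\tfrac12(1+\sin nx)$, which has no uniformly convergent subsequence). So ``$D\cap B(0,r)$ is contained in the image of a compact set under subtraction'' is not available. The set $D$ is nevertheless closed, but you have to argue directly: if $a_n-b_n\to d$ with $a_n\in A$, $b_n\in[\psi,s\psi]$, then $(a_n)$ is bounded, so by compactness of $A$-balls a subsequence $a_{n_k}\to a\in A$, hence $b_{n_k}\to a-d$, which lies in the \emph{closed} set $[\psi,s\psi]$.

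Second, and more seriously, the ``last step is to convert $\sigma$ into the pair $(\mu,\nu)$'' is exactly the heart of the proof, and you have not found the mechanism. Your guess $\mu=\tfrac1s\sigma_++(\text{correction})$ is not the right form, and the inequality you claim, $\dual{\sigma}{\psi'}\le\dual{\sigma}{\psi}$ for $\psi\le\psi'\le s\psi$, does not follow from the separation (what you actually get, taking $\varphi=0$, is $\dual{\sigma}{\psi'}\le-\delta$ for every such $\psi'$). What is missing is the exploitation of the \emph{order-interval freedom via continuous approximation of the Hahn set}: take the Hahn decomposition of $\sigma=\sigma_+-\sigma_-$ with $\sigma_+$ supported on $P$, $\sigma_-$ on $N=K\setminus P$, pick continuous $h_n\colon K\to[0,1]$ converging in $L^1(\sigma_++\sigma_-)$ to $\chi_P$, and feed $\psi'_n=\psi\bigl(1+(s-1)h_n\bigr)\in[\psi,s\psi]$ into $\dual{\sigma}{\psi'_n}\le-\delta$. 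Passing to the limit gives $s\dual{\sigma_+}{\psi}-\dual{\sigma_-}{\psi}\le-\delta<0$. Then the choice is simply $\mu=\sigma_+$, $\nu=\sigma_-$ (no $\tfrac1s$ correction): one has $\dual{\mu-\nu}{\varphi}=\dual{\sigma}{\varphi}\ge0$ on $A$ by your cone-scaling argument, while $\dual{s\mu-\nu}{\psi}=s\dual{\sigma_+}{\psi}-\dual{\sigma_-}{\psi}<0$, which is exactly the negation of the second bullet. This is the same trick the paper applies (there, by plugging $(\tfrac1s+(1-\tfrac1s)f_n)\varphi$ into $B$); without it, the Jordan decomposition alone does not reconcile the symmetric constraint $\mu-\nu\in A^\circ$ with the asymmetric conclusion about $s\mu-\nu$, and the proof does not close.
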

\begin{proof} $\implies$ is easy because the inequality $\psi \leq \varphi \leq s \psi$ implies $\dual{s \mu - \nu}{\psi} \geq \dual{\mu-\nu}{\varphi}$.

  For the converse, since $A$ is a convex cone, the set $B$ of $\psi$ satisfying $\exists \varphi \in A, \psi \leq \varphi \leq s \psi$ is a convex cone. Moreover, the compactness assumption on $A$ implies that $B$ is also closed. Assume that $\psi \notin B$. By Hahn-Banach there is a linear form on $C(K)$ which is nonnegative on $B$ and negative at $\psi$. By the Riesz representation theorem and the Hahn decomposition, this linear form can be written as $f \mapsto \int fd(\mu-\nu)$ for positive measures $\mu,\nu$ such that there is a Baire measurable subset $E \subset K$ satisfying $\nu(E)=0$ and $\mu(K \setminus E)=0$.
 
  Let $\varphi \in A$. Let $f_n\colon K \to [0,1]$ be a sequence of continuous functions converging in $L^1(K,\mu+\nu)$ to the indicator function of $E$. Then for every $n$, the function $(\frac 1 s+(1-\frac 1 s) f_n)\varphi$ belongs to $B$ so $\dual{\mu-\nu}{(\frac 1 s+(1-\frac 1 s) f_n)\varphi} > 0$. By making $n \to \infty$ we get $\dual{\mu-\nu}{\frac 1 s \varphi 1_{K \setminus E} + \varphi 1_E}\geq 0$, which can be written as $\dual{\frac 1 s \mu-\nu}{\varphi} \geq 0$.

  So we have $\dual{\frac 1 s \mu-\nu}{\varphi} \geq 0$ for every $\varphi \in A$, whereas $\dual{\mu-\nu}{\psi} < 0$. This proves the lemma.
\end{proof}

We can now prove the converse implication in Theorem \ref{thm:hernandez_isom}. Assume that $\|T_X\| \leq C$ for every $T\in A^\circ$. Let $x_1,\dots,x_n \in X$. Define $\psi \in H_n$ by $\psi(z) = \|\sum_{i=1}^n z_i x_i\|^p$, and view $\psi$ in $C(\KP^{n-1})$. The assumption that $\|T_X\| \leq C$ for every $T\in A^\circ$ implies that $\dual{C^p \mu - \nu}{\psi} \geq 0$ for every positive measures $\mu,\nu$ on $\KP^{n-1}$ such that $\dual{\mu-\nu}{\varphi} \geq 0$ for all $\varphi \in A$. By Lemma \ref{lem:polar_C(K)} (remember Lemma~\ref{lem:compactnessNAn}) this implies that there is $\varphi$ in the closed convex hull of $N(A,n)$ such that $\psi \leq \varphi \leq  C^p \psi$. By the proof of Theorem \ref{thm:hernandez}, there is a space $Y \in \prep{A}^\circ$ and $y_1,\dots,y_n \in Y$ such that $\varphi(z) = \|\sum_i z_i y_i\|^p$. By taking the $1/p$-th power in the inequality $\psi \leq \varphi \leq  C^p \psi$ we get that $\|\sum z_i x_i\| \leq \|\sum z_i y_i\| \leq C \|\sum z_i x_i\|$ for every $y \in \K^n$. This means that the linear span of $x_1,\dots,x_n$ is at Banach-Mazur distance $\leq C$ from the linear span on $\{y_1,\dots,y_n\}$ and concludes the proof.

\subsection{Understanding the encoding of operators in $H_n^*$}

\begin{lem}\label{lem:when_mu-mu=mu-mu} Let $f,g,\tilde f, \tilde g$ be $n$-uples of elements of $L_p$ spaces. Then $\mu_f-\mu_g = \mu_{\tilde f} - \mu_{\tilde g}$ if and only if there is $h \in L_p(\Omega,m)^n, \tilde h \in L_p(\tilde \Omega,\tilde m)^n$ such that $\mu_{(f_i \oplus h_i)_{i=1}^n} = \mu_{(\tilde f_i \oplus \tilde h_i)_{i=1}^n} $ and $\mu_{(g_i \oplus h_i)_{i=1}^n} = \mu_{(\tilde g_i \oplus \tilde h_i)_{i=1}^n}$.
\end{lem}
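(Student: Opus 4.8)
The plan is to prove the nontrivial ``only if'' direction; the ``if'' direction is immediate since $\mu_{(f_i\oplus h_i)} = \mu_f + \mu_h$ and subtracting the two identities cancels the $h$- and $\tilde h$-parts. So assume $\mu_f - \mu_g = \mu_{\tilde f} - \mu_{\tilde g}$, equivalently $\mu_f + \mu_{\tilde g} = \mu_{\tilde f} + \mu_g$ as elements of $H_n^*$. By additivity of the construction $(\Omega,m,f)\mapsto \mu_f$ under disjoint unions of measure spaces, $\mu_f + \mu_{\tilde g} = \mu_{f\sqcup \tilde g}$ where $f\sqcup\tilde g$ denotes the $n$-uple on the disjoint union $\Omega \sqcup \tilde\Omega$ whose restriction to $\Omega$ is $f$ and to $\tilde\Omega$ is $\tilde g$; similarly $\mu_{\tilde f} + \mu_g = \mu_{\tilde f \sqcup g}$. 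So the hypothesis says $\mu_F = \mu_G$ for two $n$-uples $F = f\sqcup\tilde g$ and $G = \tilde f \sqcup g$ living on (abstractly isomorphic, up to relabeling the pieces) measure spaces.

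The key point is then a ``transfer'' statement: \emph{whenever $\mu_F = \mu_G$ for $n$-uples $F\in L_p(\Omega,m)^n$, $G\in L_p(\Omega',m')^n$, the $n$-uples $F$ and $G$ can be made to agree after extending each by a common auxiliary $n$-uple.} Concretely, I want $k\in L_p(\Omega,m)^n$ and $k'\in L_p(\Omega',m')^n$ with $\mu_{F\oplus k} = \mu_{G\oplus k'}$, i.e. the distributions of $(F,k)$ on $\K^{2n}\setminus\{0\}$ and of $(G,k')$ on $\K^{2n}\setminus\{0\}$ coincide (the role of $k,k'$ being to ``resolve'' $F$ and $G$ so that they become genuinely equidistributed, not merely equidistributed after composing with every degree-$p$ homogeneous function). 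Here one should use the fact — to be established in the surrounding \ref{lem:when_muf=mug}/\ref{rem:when_muf=mug} that the excerpt refers to — that $\mu_F = \mu_G$ precisely means the pushforward of $m$ by $\omega\mapsto F(\omega)/|F(\omega)|$ (on $\{F\ne 0\}$) equals the pushforward of $m'$ by $\omega\mapsto G(\omega)/|G(\omega)|$, together with matching of the ``mass at $0$'' being irrelevant because $\mu_F$ only sees the radial $p$-homogeneous behaviour. Once $F$ and $G$ have the same distribution on the projective space $\KP^{n-1}$ (with the same total mass), a standard measure-isomorphism argument on standard measure spaces — disintegrate both $m$ and $m'$ over this common projective distribution and choose a common parametrization of the fibers by an interval — produces an auxiliary coordinate; encoding that coordinate as the $n$-uple $k = (t,0,\dots,0)$ (after a harmless further change of space to make $t$ bounded away from $0$ where needed, cf. the ``$0\notin E$'' discussion) gives $k,k'$ with $(F,k)$ and $(G,k')$ genuinely equidistributed on $\K^{2n}\setminus\{0\}$.

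Now I unwind this for $F = f\sqcup\tilde g$ and $G = \tilde f\sqcup g$. The auxiliary $n$-uple on $\Omega\sqcup\tilde\Omega$ restricts to some $h\in L_p(\Omega,m)^n$ on the $\Omega$-part and to some $\tilde h\in L_p(\tilde\Omega,\tilde m)^n$ on the $\tilde\Omega$-part; similarly on $\Omega'\sqcup\tilde\Omega'$ it restricts to pieces over $\tilde\Omega$ and $\Omega$. Matching up the pieces carefully (the $\Omega$-piece of $F$ is $f$ and the $\Omega$-piece of $G$ is $g$; the $\tilde\Omega$-piece of $F$ is $\tilde g$ and of $G$ is $\tilde f$), genuine equidistribution of $(F,k)$ with $(G,k')$ restricted to the appropriate parts yields exactly $\mu_{(f_i\oplus h_i)} = \mu_{(\tilde f_i \oplus \tilde h_i)}$ and $\mu_{(g_i\oplus h_i)} = \mu_{(\tilde g_i\oplus\tilde h_i)}$, where I have relabeled the auxiliary $n$-uples so that the one paired with $f$ and the one paired with $g$ are both called $h$ (on $\Omega$) and similarly $\tilde h$ on $\tilde\Omega$.

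The main obstacle is the transfer statement, i.e. upgrading ``$\mu_F=\mu_G$'' (equality of all degree-$p$ homogeneous integrals, equivalently equidistribution of the \emph{directions} $F/|F|$) to genuine equidistribution of $n$-uples after a common extension. This is exactly the content that the surrounding Lemma \ref{lem:when_muf=mug} and Remark \ref{rem:when_muf=mug} are there to supply, so in the write-up I will reduce to them; the remaining work is the bookkeeping of disjoint unions and the disintegration argument over the standard measure space $\KP^{n-1}$, both of which are routine given the standing assumption that all measure spaces are standard and the ambient set of measure spaces is closed under the relevant operations.
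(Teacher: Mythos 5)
Your reduction of the ``only if'' direction to the identity $\mu_f + \mu_{\tilde g} = \mu_{\tilde f} + \mu_g$ is fine, but from there the argument goes off track, in two ways.

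First, the proposal misreads what $\mu_{(f_i\oplus h_i)_{i=1}^n}$ is. Here $f_i\oplus h_i$ is a single function on a disjoint union of measure spaces, so $(f_i\oplus h_i)_{i=1}^n$ is still an $n$-uple and $\mu_{(f_i\oplus h_i)_{i=1}^n}=\mu_f+\mu_h$ is a linear form on $H_n$, equivalently a positive measure on $\KP^{n-1}$. It is not a statement about the joint distribution of the $2n$-uple $(f,h)$ in $\K^{2n}$. Your ``transfer'' goal --- genuine equidistribution of $(F,k)$ and $(G,k')$ as $2n$-uples --- is a much stronger property than what the lemma asks for (indeed genuine equidistribution of $(f,h)$ with $(\tilde f,\tilde h)$ would force $\mu_f=\mu_{\tilde f}$ separately by integrating functions that depend only on the first $n$ coordinates, trivializing the statement); chasing after it makes the argument both harder and misaligned with the conclusion.

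Second, even granting the transfer, the ``unwinding'' step is a genuine gap. Genuine equidistribution of $(F,k)$ on $\Omega\sqcup\tilde\Omega$ with $(G,k')$ on $\tilde\Omega\sqcup\Omega$ is a statement about equality of pushforward measures on $\K^{2n}\setminus\{0\}$; it cannot be ``restricted to pieces of the domain''. Concretely, for Borel $E\subset\K^{2n}\setminus\{0\}$ it gives
\[
m(\{(f,k|_\Omega)\in E\})+\tilde m(\{(\tilde g,k|_{\tilde\Omega})\in E\})
=\tilde m(\{(\tilde f,k'|_{\tilde\Omega})\in E\})+m(\{(g,k'|_\Omega)\in E\}),
\]
and there is no reason the two sides decompose term-by-term. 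A measure isomorphism implementing the equidistribution need not carry the $\Omega$-block of the first space to the $\tilde\Omega$-block of the second; in general it scrambles them, so the claimed equalities $\mu_f+\mu_h=\mu_{\tilde f}+\mu_{\tilde h}$ and $\mu_g+\mu_h=\mu_{\tilde g}+\mu_{\tilde h}$ do not follow.

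The paper's proof avoids both issues by staying entirely at the level of the measures $\nu_f,\nu_g,\nu_{\tilde f},\nu_{\tilde g}$ on $\KP^{n-1}$ (so that the two target identities are just $\nu_f+\nu_h=\nu_{\tilde f}+\nu_{\tilde h}$ and $\nu_g+\nu_h=\nu_{\tilde g}+\nu_{\tilde h}$). Write the Jordan decomposition $\nu_+-\nu_-$ of the signed measure $\nu_f-\nu_g=\nu_{\tilde f}-\nu_{\tilde g}$; minimality of the Jordan decomposition (Lemma \ref{lem:jordan_decomposition} and the remark preceding it) gives that $\nu_f-\nu_+=\nu_g-\nu_-$ and $\nu_{\tilde f}-\nu_+=\nu_{\tilde g}-\nu_-$ are positive measures, which by the proof of Lemma \ref{lem:dual_Hn} are of the form $\nu_{\tilde h}$ and $\nu_h$ for suitable $n$-uples. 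Then $\nu_f-\nu_{\tilde h}=\nu_+=\nu_{\tilde f}-\nu_h$ and $\nu_g-\nu_{\tilde h}=\nu_-=\nu_{\tilde g}-\nu_h$, which is exactly the conclusion. No transfer to genuine equidistribution, no disintegration, and no bookkeeping of disjoint unions are needed.
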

\begin{proof}
The if direction is easy, because $\mu_{(f_i \oplus h_i)_{i=1}^n} = \mu_f+\mu_h$.

For the converse, assume that $\mu_f-\mu_g = \mu_{\tilde f} - \mu_{\tilde g}$. Let $\nu_f$ be the positive measure on $\KP^{n-1}$ such that, for every $\varphi \in H_n$
\begin{equation}\label{eq:mu_nu} \dual{\mu_f}{\varphi} = \int_{\KP^{n-1}} \varphi\left(\frac{z}{|z|}\right) d\nu_f(\K z).
\end{equation}
Define similarly $\nu_g,\nu_{\tilde f},\nu_{\tilde g}$. Then $\nu_f-\nu_g = \nu_{\tilde f} - \nu_{\tilde g}$ is a signed measure on $\KP^{n-1}$. Let $\nu_+-\nu_-$ be its Jordan decomposition. By the properties of the Jordan decomposition, $\nu_f - \nu_+ = \nu_g - \nu_-$ is a positive measure on $\KP^{n-1}$, and therefore by the proof of Lemma \ref{lem:dual_Hn} it is of the form to $\nu_{\tilde h}$ for some $n$-uple  $\tilde h \in L_p(\tilde \Omega,\tilde m)$. Similarly, there is a $h \in L_p(\Omega,m)^n$ such that $\nu_{\tilde f} - \nu_+ = \nu_{\tilde g} - \nu_- = \nu_h$. 

We can rewrite these equalities as 
\[ \nu_+ = \nu_{f} - \nu_{\tilde h} = \nu_{\tilde f} - \nu_{h}\]
and
\[ \nu_- = \nu_{f} - \nu_{\tilde h} = \nu_{\tilde f} - \nu_{h}.\]
This implies that $\mu_f+\mu_h = \mu_{\tilde f} + \mu_{\tilde h}$ and $\mu_g+\mu_h = \mu_{\tilde g} + \mu_{\tilde h}$ and proves the lemma.
\end{proof}

\begin{lem}\label{lem:when_muf=mug} For two families $f\in L_p(\Omega,m)^n$ and $g\in L_p(\Omega',m')^n$, $\mu_f=\mu_g$  if and only if there is a spatial isometry $\vect\{f_1,\dots,f_n\} \to \vect\{g_1,\dots,g_n\}$ sending $f_i$ to $g_i$.
\end{lem}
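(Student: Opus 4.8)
Here is how I would prove Lemma~\ref{lem:when_muf=mug}.

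The implication ``$\Leftarrow$'' is the easy one. Since a spatial isometry is a composition of changes of density and equimeasurability-outside-$0$ maps, it suffices to check that each of these operations leaves the functional $\mu$ unchanged. For a change of density $f\mapsto hf$ this is immediate from the $p$-homogeneity of elements of $H_n$: in $\dual{\mu_{hf}}{\varphi}=\int_\Omega\varphi(h(\omega)f_1(\omega),\dots,h(\omega)f_n(\omega))\,|h(\omega)|^{-p}dm(\omega)$ the factors $|h(\omega)|^{p}$ and $|h(\omega)|^{-p}$ cancel. For an equimeasurability-outside-$0$ map $T$, the pushforward laws $f_\ast m$ and $(Tf)_\ast m'$ agree on Borel subsets of $\K^n\setminus\{0\}$; since $\varphi(0)=0$ and $|\varphi|\leq|z|_p^{p}\|\varphi\|$ is $(f_\ast m)$-integrable (its integral is $\sum_i\|f_i\|_p^{p}<\infty$), the integral defining $\mu_f$ only sees the law of $f$ on $\K^n\setminus\{0\}$, which is preserved, so $\dual{\mu_{Tf}}{\varphi}=\dual{\mu_f}{\varphi}$.

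For ``$\Rightarrow$'', assume $\mu_f=\mu_g$. The plan is to bring $f$ and $g$ into a common normal form by a chain of spatial isometries, and then to match them by a single equimeasurability-outside-$0$ map. I would proceed in three reduction steps, each preserving $\mu$ (by the ``$\Leftarrow$'' computation), so that the equality $\mu_f=\mu_g$ persists throughout. (i) Restrict $f$ to $\{f\neq0\}$ and $g$ to $\{g\neq0\}$: this is an equimeasurability-outside-$0$ map onto its image — off $\{f\neq0\}$ every element of $\vect\{f_1,\dots,f_n\}$ vanishes — hence a spatial isometry; so we may assume $f,g$ vanish nowhere. (ii) Multiply by $|f|_p^{-1}$, a change of density $\vect\{f_1,\dots,f_n\}\subset L_p(\Omega,m)\to L_p(\Omega,|f|_p^{p}m)$, replacing $f$ by the unit-valued $f/|f|_p$, and similarly for $g$; now $f,g$ take values in the $\ell_p$-unit sphere $\Sigma=\{z\in\K^n\mid|z|_p=1\}$. (iii) Fix a Borel section $s\colon\KP^{n-1}\to\Sigma$, write $f(\omega)=\phi_f(\omega)\,s(\K f(\omega))$ with $\phi_f\colon\Omega\to\{\lambda\in\K\mid|\lambda|=1\}$ Borel, and multiply by $\phi_f^{-1}$ (a change of density with unimodular multiplier) to replace $f$ by $s\circ(\K f)$, and similarly for $g$. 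After these reductions $f=s\circ(\K f)$, $g=s\circ(\K g)$, still with $\mu_f=\mu_g$.

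Now I would use that, $f$ being unit-valued, $\dual{\mu_f}{\varphi}=\int_\Omega\tilde\varphi(\K f(\omega))\,dm(\omega)$, where $\tilde\varphi\in C(\KP^{n-1})\cong H_n$ is the function attached to $\varphi$; so by the Riesz representation theorem the measure $\nu_f$ of \eqref{eq:mu_nu} is exactly $(\K f)_\ast m$. Hence $\mu_f=\mu_g$ forces $(\K f)_\ast m=(\K g)_\ast m'$, and pushing forward by $s$ gives the equality of laws $f_\ast m=s_\ast((\K f)_\ast m)=s_\ast((\K g)_\ast m')=g_\ast m'$ (both supported in $s(\KP^{n-1})\subset\K^n\setminus\{0\}$). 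Equality of these laws makes $\sum_i c_i f_i=0$ and $\sum_i c_i g_i=0$ equivalent, hence there is a well-defined linear bijection $E\colon\vect\{f_1,\dots,f_n\}\to\vect\{g_1,\dots,g_n\}$ with $Ef_i=g_i$; the same equality shows $E$ is isometric (compute $L_p$-norms as integrals against the common law) and equimeasurable outside $0$ (for linear $\mathbf{L}\colon\K^n\to\K^k$ and Borel $F\subset\K^k\setminus\{0\}$, $m(\{\mathbf{L}\circ f\in F\})=(f_\ast m)(\mathbf{L}^{-1}F)=(g_\ast m')(\mathbf{L}^{-1}F)=m'(\{\mathbf{L}\circ g\in F\})$). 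Finally, composing the reductions (i)--(iii) on $f$, then $E$, then the inverses of the reductions (i)--(iii) on $g$ (inverses of changes of density, resp.\ of equimeasurability-outside-$0$ maps, are again of the same type) yields the desired spatial isometry $\vect\{f_1,\dots,f_n\}\to\vect\{g_1,\dots,g_n\}$ sending each $f_i$ to $g_i$.

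The step I expect to be the crux is (iii): without straightening $f$ onto the fixed section $s\circ(\K f)$, the laws $f_\ast m$ and $g_\ast m'$ would agree only after projection to $\KP^{n-1}$, i.e.\ modulo a fibrewise unimodular ambiguity, and the $\omega$-dependent phase twist is exactly what removes this ambiguity so that the two laws become genuinely equal and a single equimeasurability-outside-$0$ map can finish the argument. The remaining measure-theoretic inputs — a Borel section $\KP^{n-1}\to\Sigma$, Borel measurability of $\phi_f$, and the passage from equal pushforward laws to an equimeasurability-outside-$0$ map — are routine, and I would not expect difficulty there.
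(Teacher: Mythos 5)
Your proof is correct and takes essentially the same approach as the paper: normalize $f$ and $g$ onto a fixed Borel section of $\KP^{n-1}$ by changes of density (the paper collapses your steps (ii)--(iii) into a single multiplier $h$ with $f=h\,(s\circ \K f)$ on $\{f\neq 0\}$, and handles the zero set inside the integral rather than via your explicit restriction step (i)), deduce from $\mu_f=\mu_g$ that the pushforward laws agree on $\K^n\setminus\{0\}$, and conclude with an equimeasurability-outside-$0$ map.
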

\begin{proof}
The if direction is easy~: firstly if there is a measurable function $h \colon \Omega \to \K\setminus\{0\}$, if $(\Omega',m')=(\Omega,|h|^{-p}m)$ and $g_i=h f_i$ for all $i$, then for every $\varphi \in H_n$, $\varphi(g_1,\dots,g_n) = |h|^p\varphi(f_1,\dots,f_n)$ and therefore $\dual{\mu_g}{\varphi}=\dual{\mu_f}{\varphi}$. Secondly if $f_1,\dots,f_n$ and $g_1,\dots,g_n$ are equimeasurable outside of $0$ in the sense of Definition \ref{defn:spatial_isometry}, then $\int \varphi(f_1,\dots,f_n) dm= \int \varphi(g_1,\dots,g_n) dm'$ for every Borel function $\varphi$ vanishing at $0$ and such that the integrals are defined. In particular $\mu_f=\mu_g$.

For the converse, assume that $\mu_f=\mu_g$. Take a measurable section $s \colon \KP^{n-1} \to \K^n$ with values in $\{z \in \K^n, |z|=1\}$. Then there are measurable nonvanishing functions $h \colon \Omega\to \K^*$ and $h'\colon \Omega' \to \K^*$ such that $f(\omega) = h(\omega) s(\K f(\omega))$ for every $\omega \in \Omega$ such that $f(\omega) \neq 0$, and similarly $g(\omega') = h'(\omega') s(\K g(\omega'))$ if $g(\omega') \neq 0$. By replacing $m$ by $|h|^{-1/p} m$ and $f_i$ by $h_i f_i$ and similarly for $g$ we can assume that $h=1$ and $h'=1$, and we shall prove that $f$ and $g$ are equimeasurable outside of $0$. By this we mean that for every Borel $E \subset \K^n\setminus\{0\}$,  $m(\{\omega, (f_1(\omega),\dots,f_n(\omega)) \in E\}) = m'(\{\omega', (g_1(\omega'),\dots,T g_n(\omega')) \in E\})$. It is clear that this implies that, for every matrix $A \in M_{m,n}(\K)$, $Af$ and $Ag$ are equimeasurable outside of $0$, and therefore that the linear map sending $f_i$ to $g_i$ is well-defined and is as in the definition of equimeasurability outside of $0$.

By the identification of $H_n$ with $C(\KP^{n-1})$, using that $|f| \in \{0,1\}$ we have 
\[ \int_{\Omega \setminus f^{-1}(0)} \psi(\K f) dm = \int_{\Omega' \setminus g^{-1}(0)} \psi(\K g) dm'\]
for every continuous function $\psi \colon  \KP^{n-1} \to  \K$, and therefore also for every bounded Borel function $\varphi \colon \KP^{n-1} \to \K$. This implies, since $f$ and $g$ take values in $\{0\} \cup s(\KP^{n-1})$, that $\int \varphi(f) dm = \int \varphi(g) dm'$ for every Borel function $\K^{n} \to \K$ vanishing at $0$. Equivalently, $f$ and $g$ are equimeasurable outside of $0$.
\end{proof}
\begin{rem}\label{rem:when_muf=mug} The same proof shows actually a bit more~: if a linear map $T \colon E \subset L_p(\Omega,m) \to L_p(\Omega',m')$ satisfies $\mu_f = \mu_{Tf}$ for every $n$ and every $f \in E^n$, then $T$ is a spatial isometry. Indeed, since by our standing assumption $E$ (as every other space considered in this paper) is separable, we can find a sequence $(f_i)_{i \geq 0}$ generating a dense subspace of $E$ and satisfying $\sum_i \|f_i\|^p<\infty$, and in particular $(f_i(\omega))_{i \geq 0}$ belongs to $\ell_p$ for almost every $\omega$. Then the same proof applies, except that we replace $\K^n$ by $\ell_p$ and $\KP^{n-1}$ by its projectivization $\ell_p/\K^*$.
\end{rem}
We shall also need the following variant~:
\begin{lem}\label{lem:when_muf_and_mug_are_close} For two families $f\in L_p(\Omega,m)^n$ and $g\in L_p(\Omega',m')^n$ and $\varepsilon>0$, $\|\mu_f - \mu_g\| < \varepsilon$ if and only if there are spatial isometries $U \colon \vect\{f_1,\dots,f_n\} \to L_p(\Omega'',m'')$ and $V \colon \vect\{g_1,\dots,g_n\} \to L_p(\Omega'',m'')$ such that
  \[  \int_{\Omega''} (|Uf|^p + |Vg|^p) \chi_{Uf \neq Vg} < \varepsilon.\]
\end{lem}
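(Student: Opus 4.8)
The plan is to treat this as a quantitative version of Lemma~\ref{lem:when_muf=mug}, with the total variation distance of measures on $\KP^{n-1}$ replacing equality of measures there. First I would set up notation as in the proof of Lemma~\ref{lem:dual_Hn}: identify $H_n$ with $C(\KP^{n-1})$ and, for an $n$-uple $f\in L_p(\Omega,m)^n$, write $\nu_f$ for the positive Radon measure on $\KP^{n-1}$ determined by $\dual{\mu_f}{\varphi}=\int_{\KP^{n-1}}\varphi(z/|z|)\,d\nu_f(\K z)$; concretely $\nu_f$ is the push-forward of $|f|^p\chi_{f\neq 0}\,dm$ under $\omega\mapsto\K f(\omega)$. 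By the Riesz representation theorem $\|\mu_f-\mu_g\|$ is exactly the total variation $\|\nu_f-\nu_g\|$, and this is the quantity I need to compare to $\int_{\Omega''}(|Uf|^p+|Vg|^p)\chi_{Uf\neq Vg}$.

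For the ``if'' direction I would argue as follows. Given $U$ and $V$ as in the statement, the easy implication in Lemma~\ref{lem:when_muf=mug} gives $\mu_{Uf}=\mu_f$ and $\mu_{Vg}=\mu_g$. For every $\varphi\in H_n$ the function $\omega\mapsto\varphi(Uf(\omega))-\varphi(Vg(\omega))$ vanishes on $\{Uf=Vg\}$ and, by \eqref{eq:def_norm_Hn}, is bounded in absolute value by $\|\varphi\|\,(|Uf(\omega)|^p+|Vg(\omega)|^p)$ off that set; integrating gives $\|\mu_f-\mu_g\|=\|\mu_{Uf}-\mu_{Vg}\|\le\int_{\Omega''}(|Uf|^p+|Vg|^p)\chi_{Uf\neq Vg}<\varepsilon$.

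For the ``only if'' direction I would let $\nu_f-\nu_g=\rho_+-\rho_-$ be the Jordan decomposition, so that (by the property of the Jordan decomposition recalled before Lemma~\ref{lem:jordan_decomposition}, applied to the decomposition $\nu_f-\nu_g$ of the signed measure $\rho_+-\rho_-$) $\sigma:=\nu_f-\rho_+=\nu_g-\rho_-$ is a positive measure and $\rho_+(\KP^{n-1})+\rho_-(\KP^{n-1})=\|\nu_f-\nu_g\|=\|\mu_f-\mu_g\|<\varepsilon$. Fix a measurable section $s\colon\KP^{n-1}\to\{z\in\K^n:|z|=1\}$, and let $\Omega''$ be the disjoint union of three copies of $\KP^{n-1}$ carrying the measures $\sigma$, $\rho_+$, $\rho_-$ (a standard finite measure space, just as in the proof of Lemma~\ref{lem:dual_Hn}). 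Define $\tilde f,\tilde g\in L_p(\Omega'',m'')^n$ by setting $\tilde f=\tilde g=s$ on the first copy, $\tilde f=s$ and $\tilde g=0$ on the second, and $\tilde f=0$ and $\tilde g=s$ on the third. A computation of push-forwards gives $\nu_{\tilde f}=\sigma+\rho_+=\nu_f$ and $\nu_{\tilde g}=\sigma+\rho_-=\nu_g$, hence $\mu_{\tilde f}=\mu_f$ and $\mu_{\tilde g}=\mu_g$, so Lemma~\ref{lem:when_muf=mug} produces spatial isometries $U\colon\vect\{f_1,\dots,f_n\}\to L_p(\Omega'',m'')$ with $Uf_i=\tilde f_i$ and $V\colon\vect\{g_1,\dots,g_n\}\to L_p(\Omega'',m'')$ with $Vg_i=\tilde g_i$. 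Since $\tilde f=\tilde g$ on the first copy while on each of the other two one of them vanishes and the other has modulus $1$ almost everywhere, $\{Uf\neq Vg\}$ equals the union of the last two copies up to a null set, and there $|Uf|^p+|Vg|^p=1$ almost everywhere; thus $\int_{\Omega''}(|Uf|^p+|Vg|^p)\chi_{Uf\neq Vg}=\rho_+(\KP^{n-1})+\rho_-(\KP^{n-1})<\varepsilon$.

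I do not expect a genuine obstacle here; the one point needing care is the construction in the ``only if'' part, namely realizing both $\mu_f$ and $\mu_g$ on a common measure space by $n$-uples that coincide \emph{exactly} on the overlap measure $\sigma=\nu_f-\rho_+$. Using the same section $s$ on all three pieces is what forces agreement on the first copy, and the mutual singularity supplied by the Jordan decomposition is what makes the total variation count of the differing set come out right while keeping everything compatible with the standing conventions on measure spaces.
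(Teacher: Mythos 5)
Your proof is correct and follows essentially the same route as the paper: the same estimate via \eqref{eq:def_norm_Hn} for the ``if'' direction, and for ``only if'' the same decomposition $\nu_f=\sigma+\rho_+$, $\nu_g=\sigma+\rho_-$ into a common part plus the two singular pieces of the Jordan decomposition, realized on a three-piece disjoint union so that Lemma~\ref{lem:when_muf=mug} supplies $U$ and $V$. You merely spell out explicitly (section $s$ on three copies of $\KP^{n-1}$) what the paper abbreviates by invoking the proof of Lemma~\ref{lem:dual_Hn}.
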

\begin{proof}
We prove the slightly stronger statement with $<\varepsilon$ replaced by $\leq \varepsilon$.

  The if direction is easy~: by Lemma~\ref{lem:when_muf=mug} we have $\mu_f= \mu_{Uf}$ and $\mu_g = \mu_{Vg}$, and therefore for every $\varphi \in H_n$,
  \begin{align*} \langle \mu_f-\mu_g,\varphi\rangle & = \int_{\Omega''} \varphi(Uf) - \varphi(g)\\
    & \leq \int_{\Omega''} (|\varphi(Uf)|+|\varphi(Vg)|) \chi_{f \neq Ug}\\
    & \leq \int_{\Omega''} (|Uf|^p + |Vg|^p) \|\varphi\| \leq \varepsilon \|\varphi\|.
  \end{align*}
Taking the supremum over $\varphi$ we get $\|\mu_f - \mu_g\|\leq \varepsilon$.

The converse follows from a coupling argument. Assume that $\|\mu_f-\mu_g\|\leq \varepsilon$. Let $\nu_f$ and $\nu_g$ be the measures on $\KP^{n-1}$ given by \eqref{eq:mu_nu}, so that the total variation norm of $\nu_f-\nu_g$ is at most $\varepsilon$. This means that we can decompose $\nu_f = \nu_0+\nu_1$ and $\nu_g = \nu_0 + \nu_2$ for positive measures with $(\nu_1 + \nu_2)(\KP^{n-1}) \leq \varepsilon$. As in the proof of Lemma~\ref{lem:dual_Hn}, each $\nu_k$ corresponds by  \eqref{eq:mu_nu} to $\mu_{h^k}$ for an $n$-uple $h^k \in L_p(\Omega_k,m_k)^n$ with $\sum_i \|h^k_i\|_p^p = \nu_k(\KP^{n-1})$. In particular, we have $\mu_f = \mu_{h^0} + \mu_{h^1}$ and $\mu_g = \mu_{h^0} + \mu_{h^2}$.

Let us define $\Omega''$ as the disjoint union $\Omega_0 \cup \Omega_1 \cup \Omega_2$, $m''$ as $m_0+m_1+m_2$, and $f'=h^0\oplus h^1 \oplus 0$ and $g' = h^0 \oplus 0 \oplus h^1$, so that $\mu_{f'} = \mu_{h^0}+\mu_{h^1} = \mu_f$ and $\mu_{g'} = \mu_g$. By Lemma~\ref{lem:when_muf=mug}, there are spatial isometries $U$ and $V$ sending $f$ to $f'$ and $g$ to $g'$ respectively, and we have
\[ \int_{\Omega''} (|f'|^p + |g'|^p)\chi_{f' \neq g'} = \int_{\Omega_1} |h^1|^p + \int_{\Omega_2} |h^2|^p \leq \varepsilon.\]
This proves the lemma.
\end{proof}
There is also an asymetric variant of the preceding lemma, that can be useful.
\begin{rem}\label{rem:when_muf_and_mug_are_close}  In Lemma~\ref{lem:when_muf_and_mug_are_close}, we can moreover assume that $(\Omega'',m'') = (\Omega\times [0,1], m \otimes d\lambda)$ (for $\lambda$ the Lebesgue measure), and that the spatial isometry $U$ is simply $U\xi(\omega,s) =\xi(\omega)$.
\end{rem}
\begin{proof}
Let $\mu_f,\mu_g,\nu_f=\nu_0+\nu_1,\nu_g=\nu_0+\nu_2$ be as in the proof of Lemma~\ref{lem:when_muf_and_mug_are_close}, where $\|\nu_1+\nu_2\|<\varepsilon$. We can even assume that $\nu_1 \neq 0$ (this is where the strict inequality $<\varepsilon$ is used). Denote by $\frac{d\nu_0}{d \nu_f} \colon \KP^{n-1} \to [0,1]$ the Radon-Nikodym derivative. Define $A \subset \Omega \times [0,1]= \Omega''$ by $A = \{(\omega,s) \mid s \leq 0 \leq s \leq \frac{d\nu_0}{d \nu_f}(\K^* f(x))\}$, so that $\mu_{f \chi_A} = \nu_0$ and $\mu_{f \chi_{\Omega''\setminus A}} = \nu_1$. In particular, $\Omega''\setminus A$ has positive measure and is therefore an atomless standard measure space, and we can find $h \in L_p(\Omega'',m'')^n$ that vanishes on $A$ such that $\mu_h$ corresponds to $\nu_2$. We then have $\mu_g = \mu_h + \mu_{f \chi_A} = \mu_{h+f\chi_A}$. The last equality is because $h$ and $f\chi_A$ are disjointly supported. By Lemma~\ref{lem:when_muf=mug}, there is a spatial isometry $V$ sending $g$ to $h+f\chi_A$. Moreover, we have
  \[ \int_{\Omega''} (|f|^p + |h+f\chi_A|^p)\chi_{f \neq h + f \chi_A} \leq \int_{\Omega''\setminus A} (|f|^p + |h|^p)< \varepsilon.\]
  \end{proof}

If $B \subset  \Op$, we define new (larger) classes as follows~:

\begin{itemize}
\item $\Lambda_1(B)$ is the set of operators $(T,\mathrm{id})\colon \dom(T) \oplus_p L_p(\Omega,\mu) \to \ran(T) \oplus L_p(\Omega,\mu)$ for $T \in B$ and a measure space $(\Omega,\mu)$.
\item $\Lambda_2(B) = \{ U \circ T \circ V \mid U,V \textrm{ spatial isometries, }T \in B\}$.
\item $\Lambda_3(B)$ is the set of all $S \colon \dom(S) \subset L_p(\Omega_1,m_1) \to L_p(\Omega_2,m_2)$ such that there is $T \in B$ where $\dom(T) \subset L_p(\Omega_1,m_1) \oplus L_p(\Omega,m)$, $\ran(T) \subset L_p(\Omega_2,m_2) \oplus L_p(\Omega,m)$, $\dom(S)$ is the image of $\dom(T)$ by the first coordinate projection and $T(f \oplus g) = Sf \oplus g$ for every $f\oplus g \in \dom(T)$.
\item $\Lambda_4(B)$ is the set of all $S \colon \dom(S) \subset L_p(\Omega,m) \to L_p(\Omega',m')$ such that for every finite family $f_1,\dots,f_n$ in the domain of $T$ and every $\varepsilon>0$, there is $T \in B$ with domain contained in $L_p(\Omega,m)$ and range contained in $L_p(\Omega',m')$ and elements $g_1,\dots,g_n \in D(S)$ such that $\|f_i - g_i\|\leq \varepsilon$ and $\| T f_i - S g_i \|\leq \varepsilon$.
\end{itemize}
To save place, we denote $\Lambda_{123}(B) = \Lambda_3(\Lambda_2(\Lambda_1(B)))$.

\begin{cor}\label{cor:whenmu_f-muTf} For every $T \in \Op$ and $B \subset \Op$, the following are equivalent:
  \begin{itemize}
  \item for every $n$, $P(T,n) \subset P(B,n)$.
  \item The restriction of $T$ to every finite dimensional subspace of $\dom(T)$ belongs to $\Lambda_{123}(B)$.
  \end{itemize}
\end{cor}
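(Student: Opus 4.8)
\emph{Proof proposal.} The plan is to go back and forth between the ``functional'' side (inclusions $P(\cdot,n)\subset P(B,n)$ in $H_n^*$) and the ``operator'' side (membership in $\Lambda_{123}(B)$), using Lemmas~\ref{lem:when_mu-mu=mu-mu} and \ref{lem:when_muf=mug} as a dictionary, together with the trivial additivity $\mu_{(f_i\oplus h_i)_{i=1}^n}=\mu_f+\mu_h$ (the ``if'' direction of Lemma~\ref{lem:when_mu-mu=mu-mu}) and the fact that a spatial isometry $W$ satisfies $\mu_{Wk}=\mu_k$ (the ``if'' direction of Lemma~\ref{lem:when_muf=mug}). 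Here $P(T,n)$ denotes $P(\{T\},n)$.

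\textbf{From $\Lambda_{123}(B)$ to the inclusion of polars.} First I would record, for an arbitrary $B'\subset\Op$ and every $n$, that $P(\Lambda_i(B'),n)\subset P(B',n)$ for $i=1,2,3$. For $i=1$: an element of $P(\Lambda_1(B'),n)$ reads $\mu_{(f_i\oplus g_i)_i}-\mu_{(T'f_i\oplus g_i)_i}$, which by additivity equals $\mu_f-\mu_{T'f}$. For $i=2$: if $S'=U\circ T'\circ V$ with $U,V$ spatial and $f\in\dom(S')^n$, then $Vf\in\dom(T')^n$, $\mu_f=\mu_{Vf}$ and $\mu_{T'(Vf)}=\mu_{S'f}$, whence $\mu_f-\mu_{S'f}=\mu_{Vf}-\mu_{T'(Vf)}\in P(B',n)$. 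For $i=3$: given $S'\in\Lambda_3(B')$ coming from $T'\in B'$ and $f'\in\dom(S')^n$, lift each $f'_i$ to $f'_i\oplus g_i\in\dom(T')$ (possible since the first projection is onto $\dom(S')$); then $\mu_{f'}-\mu_{S'f'}=\mu_{(f'_i\oplus g_i)_i}-\mu_{(S'f'_i\oplus g_i)_i}=\mu_{(f'_i\oplus g_i)_i}-\mu_{T'(f'_i\oplus g_i)_i}\in P(B',n)$. Chaining these three inclusions yields $P(\Lambda_{123}(B),n)\subset P(B,n)$; then for any $\mu_f-\mu_{Tf}\in P(T,n)$ one sets $E=\vect\{f_1,\dots,f_n\}$ and, using the hypothesis $T|_E\in\Lambda_{123}(B)$, concludes $\mu_f-\mu_{Tf}=\mu_f-\mu_{(T|_E)f}\in P(\Lambda_{123}(B),n)\subset P(B,n)$.

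\textbf{From the inclusion of polars to $\Lambda_{123}(B)$.} Fix a finite-dimensional $E\subset\dom(T)$ and a basis $f_1,\dots,f_n$ of $E$. Since $\mu_f-\mu_{Tf}\in P(T,n)\subset P(B,n)$, there are $S\in B$ and $\tilde f\in\dom(S)^n$ with $\mu_f-\mu_{Tf}=\mu_{\tilde f}-\mu_{S\tilde f}$. By Lemma~\ref{lem:when_mu-mu=mu-mu} there are $h\in L_p(\Omega,m)^n$ and $\tilde h\in L_p(\tilde\Omega,\tilde m)^n$ with $\mu_{(f_i\oplus h_i)_i}=\mu_{(\tilde f_i\oplus\tilde h_i)_i}$ and $\mu_{(Tf_i\oplus h_i)_i}=\mu_{(S\tilde f_i\oplus\tilde h_i)_i}$. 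By Lemma~\ref{lem:when_muf=mug}, the first equality gives a spatial isometry $V$ with $V(f_i\oplus h_i)=\tilde f_i\oplus\tilde h_i$, and the second, read from right to left, a spatial isometry $U$ with $U(S\tilde f_i\oplus\tilde h_i)=Tf_i\oplus h_i$. Now $(S,\mathrm{id})\colon\dom(S)\oplus_p L_p(\tilde\Omega,\tilde m)\to\ran(S)\oplus_p L_p(\tilde\Omega,\tilde m)$ lies in $\Lambda_1(B)$ and sends $\tilde f_i\oplus\tilde h_i$ to $S\tilde f_i\oplus\tilde h_i$, so $T_2:=U\circ(S,\mathrm{id})\circ V$ belongs to $\Lambda_2(\Lambda_1(B))$, is defined on $D:=\vect\{f_1\oplus h_1,\dots,f_n\oplus h_n\}\subset L_p(\Omega_1,m_1)\oplus L_p(\Omega,m)$, and satisfies $T_2(f_i\oplus h_i)=Tf_i\oplus h_i$, i.e. $T_2(f'\oplus k)=(T|_E)f'\oplus k$ for every $f'\oplus k\in D$, with range inside $L_p(\Omega_2,m_2)\oplus L_p(\Omega,m)$. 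Since $f_1,\dots,f_n$ are independent, the first-coordinate projection carries $D$ onto $E=\dom(T|_E)$, so $T|_E=\Lambda_3(T_2)\in\Lambda_{123}(B)$, as required.

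\textbf{Expected main obstacle.} The ideas are light; the effort lies in the bookkeeping of measure spaces — making sure the ``identity'' summand of the $\Lambda_1$-operator is built over exactly the space $(\tilde\Omega,\tilde m)$ on which $\tilde h$ lives, and that in the $\Lambda_3$-step the two copies of $(\Omega,m)$ on the domain and range sides of $T_2$ are literally the same — and in verifying that the maps furnished by Lemma~\ref{lem:when_muf=mug} are genuine spatial isometries admissible in $\Lambda_2$: namely that spatial isometries are stable under composition and inversion (which follows from the normal form $C_1EC_2$ recorded after Definition~\ref{defn:spatial_isometry}), and that precomposing $(S,\mathrm{id})$ by $V$, whose image $V(D)$ is merely a subspace of $\dom((S,\mathrm{id}))$, is still permitted in forming an element of $\Lambda_2$.
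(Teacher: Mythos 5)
Your proof is correct and follows essentially the same route as the paper's: the hard direction uses Lemma~\ref{lem:when_mu-mu=mu-mu} to produce the auxiliary $n$-tuples $h,\tilde h$, then Lemma~\ref{lem:when_muf=mug} to produce the two spatial isometries, assembling $U\circ(S,\mathrm{id})\circ V\in\Lambda_2(\Lambda_1(B))$ and stripping the identity block via $\Lambda_3$; the easy direction verifies $P(\Lambda_i(B'),n)\subset P(B',n)$ for $i=1,2,3$ exactly as the paper invokes (the paper merely says it ``follows from the easy directions'' of those two lemmas, and you spell it out). The bookkeeping concerns you flag (same auxiliary space $(\Omega,m)$ on both sides of the $\Lambda_3$ block — which Lemma~\ref{lem:when_mu-mu=mu-mu} gives directly — and the permissibility of a non-surjective $V$ in $\Lambda_2$) are legitimate but handled identically, implicitly, in the paper's version.
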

\begin{proof}
  Assume that, for a fixed $n$, $P(T,n) \subset P(B,n)$. This means that, for every $f\in \dom(T)^n$, there is $S \in B$ and $g \in \dom(S)^n$ such that $\mu_f - \mu_{Tf} = \mu_g - \mu_{S_g}$. By Lemma \ref{lem:when_mu-mu=mu-mu} and Lemma \ref{lem:when_muf=mug}, there are $h \in L_p(\Omega,m)^n$ and $\overline{h}\in L_p(\Omega',m')^n$ and spatial isometries $U \colon \vect\{ S g_i \oplus \overline{h}_i\} \to \vect\{ T f_i \oplus h_i\}$ sending $S g_i \oplus \overline{h}_i$ to $T f_i \oplus h_i$ and $V \colon \vect\{ f_i \oplus h_i\to g_i \oplus \overline{h}_i\}$ sending $f_i \oplus h_i$ to $g_i \oplus \overline{h}_i$. The operator $S_1=(S,\mathrm{id})$ on $\dom(T) \oplus L_p(\Omega',m')$ belongs to $\Lambda_1(B)$, so the operator $S_2 = U \circ S_1 \circ V$, which sends $f_i \oplus h_i$ to $Tf_i \oplus h_i$ belongs to $\Lambda_2(\Lambda_1(B))$, and therefore the restriction of $T$ to $\vect\{f_1,\dots,f_n\}$ belongs to $\Lambda_3(\Lambda_2(\Lambda_1(B)))$. This proves one direction.

  The converse is simpler: it follows from the easy directions in Lemma \ref{lem:when_mu-mu=mu-mu} and Lemma \ref{lem:when_muf=mug} that $P(\Lambda_i(B),n) = P(B,n)$ for $i=1,2,3$. In particular, if the restriction of $T$ to every $\leq n$-dimensional subspace of $\dom(T)$ belongs to $\Lambda_{123}(B)$, then $P(T,n) \subset P(B,n)$.
\end{proof}

\subsection{Convergences in $H_n^*$} This section is devoted to the understanding of the encoding of both  weak-* sequential convergence and norm convergence in $H_n^*$. Our first result asserts that weak-* convergence of sequences corresponds to the operation $\Lambda_4$ we just defined.

\begin{prop}\label{prop:sequential_weak*-closure} Let $B \subset \Op$. The smallest class containing $B$ and stable by all operations $\Lambda_1,\Lambda_2,\Lambda_3,\Lambda_4$ coincides with the set of $T \in \Op$ such that for every $n$, $P(T,n)$ is contained in the sequential weak-* closure of $P(B,n)$.
\end{prop}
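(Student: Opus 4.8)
The goal is to identify two classes: $\mathcal C_1$, the smallest class containing $B$ stable under $\Lambda_1,\dots,\Lambda_4$; and $\mathcal C_2 = \{T\in\Op \mid \forall n,\ P(T,n)\subset \overline{P(B,n)}^{w*\text{-seq}}\}$, where $\overline{\cdot}^{w*\text{-seq}}$ denotes the sequential weak-* closure in $H_n^*$. I would prove $\mathcal C_1 = \mathcal C_2$ by the two obvious inclusions, but the heart of the matter is bookkeeping the correspondence (already developed in Corollary~\ref{cor:whenmu_f-muTf} for $\Lambda_{123}$) between the operations $\Lambda_i$ on operators and natural operations on the cones $P(\cdot,n)$ inside $H_n^*$.

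\smallskip\noindent\textbf{Step 1: the inclusion $\mathcal C_1\subset\mathcal C_2$.} First I would check that $\mathcal C_2$ contains $B$ (trivial) and is stable under each $\Lambda_i$. For $\Lambda_1,\Lambda_2,\Lambda_3$ this is immediate from the identity $P(\Lambda_i(B'),n)=P(B',n)$ noted in the proof of Corollary~\ref{cor:whenmu_f-muTf}, which passes to sequential weak-* closures without change. The only substantive point is stability under $\Lambda_4$: if $S\in\Lambda_4(\mathcal C_2)$, then for $f=(f_1,\dots,f_n)\in\dom(S)^n$ and $\varepsilon>0$ there is $T\in\mathcal C_2$ and $g=(g_1,\dots,g_n)\in\dom(T)^n$ with $\|f_i-g_i\|\le\varepsilon$ and $\|Sf_i-Tg_i\|\le\varepsilon$; I would show, using Lemma~\ref{lem:when_muf_and_mug_are_close} (or rather an elementary estimate on $\mu$ in terms of $L_p$-distance that follows from \eqref{eq:def_norm_Hn}), that $\mu_g\to\mu_f$ and $\mu_{Tg}\to\mu_{Sf}$ in norm as $\varepsilon\to0$, hence $\mu_g-\mu_{Tg}\to\mu_f-\mu_{Sf}$ in norm. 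Since each $\mu_g-\mu_{Tg}\in P(T,n)\subset\overline{P(B,n)}^{w*\text{-seq}}$ and that set is a norm-closed (a fortiori sequentially weak-* closed) cone — norm closure follows because, being a cone, its sequential weak-* closure is convex and weak-* closed in $H_n^*$, wait, one must be a bit careful here; cleaner is to observe that norm limits are in particular weak-* limits of sequences, so the norm limit $\mu_f-\mu_{Sf}$ lies in the sequential weak-* closure of $\overline{P(B,n)}^{w*\text{-seq}}$; but a sequentially weak-* closed set need not be closed under double iteration of sequential limits — so instead I would argue directly: approximate $\mu_f-\mu_{Sf}$ in norm by $\mu_{g^{(k)}}-\mu_{T^{(k)}g^{(k)}}\in P(B,n)_1$ (the first sequential closure stage), and then note that a norm-convergent sequence of elements of $P(B,n)_1$ need not converge in $P(B,n)_1$, but it does converge weak-*, so its limit is in $P(B,n)_2$ — and we are not done. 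The correct fix is to use that the \emph{order} of $P(B,n)$ is bounded, which is exactly where Proposition~\ref{prop:bipolar_with_compactness} and Lemma~\ref{lem:compactnessNAn} enter: Corollary~\ref{cor:bipolarsbis} already tells us that the weak-* closed convex hull of $P(B\cup REG,n)$ equals its \emph{norm} closed convex hull, so the relevant sequential weak-* closure is in fact a norm closure and the argument closes cleanly once one works with $B\cup REG$ in place of $B$. I expect this reconciliation — making the sequential-closure bookkeeping match the compactness results — to be the main obstacle.

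\smallskip\noindent\textbf{Step 2: the inclusion $\mathcal C_2\subset\mathcal C_1$.} Suppose $T\in\mathcal C_2$; fix $n$ and $f\in\dom(T)^n$. Then $\mu_f-\mu_{Tf}$ is a sequential weak-* limit of a sequence $\ell_k\in P(B,n)$, $\ell_k=\mu_{g^{(k)}}-\mu_{S_kg^{(k)}}$ with $S_k\in B$. Using Lemma~\ref{lem:when_muf_and_mug_are_close} (and its asymmetric refinement, Remark~\ref{rem:when_muf_and_mug_are_close}), I would unpack each $\ell_k$ into spatial-isometry data on a common enlarged measure space of the form $\Omega_1\times[0,1]\oplus_p L_p([0,1])$, arrange compatibility of the auxiliary coordinates across $k$ (this is the same coupling trick as in Corollary~\ref{cor:whenmu_f-muTf}), and recognize the passage from the sequence $(S_k)$ to $T$ as precisely an application of $\Lambda_4$ after the $\Lambda_1,\Lambda_2,\Lambda_3$ manipulations: that is, $T|_{\vect\{f_1,\dots,f_n\}}\in\Lambda_4(\Lambda_{123}(B))$. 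Letting $n\to\infty$ along an exhausting sequence of finite-dimensional subspaces of $\dom(T)$ and applying $\Lambda_4$ once more gives $T\in\mathcal C_1$. One then checks, as in Remark~\ref{rem:when_muf=mug}, that assembling the finite-dimensional restrictions into $T$ itself (via a density/separability argument and a further $\Lambda_4$) causes no trouble.

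\smallskip\noindent\textbf{Step 3: bookkeeping the weak-* convergence as $L_p$-convergence.} The technical engine is the translation, for each fixed $n$, of weak-* convergence of $\mu_{g^{(k)}}$ in $H_n^*\cong C(\KP^{n-1})^*$ into weak-* (measure) convergence of the associated positive measures $\nu_{g^{(k)}}$ on $\KP^{n-1}$, then into a coupling/transport statement producing spatial isometries onto a common space. This is exactly the content of Lemma~\ref{lem:when_muf_and_mug_are_close} and Remark~\ref{rem:when_muf_and_mug_are_close}; I would apply them iteratively along the sequence, being careful that the auxiliary $L_p([0,1])$ part can be reused (it is atomless of infinite measure after rescaling, so it absorbs all the needed couplings). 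The $\Lambda_4$ operation is designed precisely so that ``$\|f_i-g_i\|\le\varepsilon$ and $\|Tf_i-Sg_i\|\le\varepsilon$'' is equivalent, after pushing through spatial isometries, to ``$\|\mu_f-\mu_{Tf}-(\mu_g-\mu_{Sg})\|\le C\varepsilon$'' with a dimension-independent constant, so the match between $\Lambda_4$ and sequential weak-* limits is, once Step~3 is set up, essentially a restatement.
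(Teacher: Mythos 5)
Your high-level decomposition (two inclusions, operations $\Lambda_i$ matched against closure operations on the cones $P(\cdot,n)$ via Corollary~\ref{cor:whenmu_f-muTf}) is the right starting point, but there are three concrete problems, the first of which is the central one.

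\textbf{1. The transfinite structure is missing.} In Step~2 you write ``Then $\mu_f-\mu_{Tf}$ is a sequential weak-* limit of a sequence $\ell_k\in P(B,n)$.'' That is not what membership in the sequential weak-* closure says. The sequential weak-* closure is the fixed point $C_\alpha$ of the transfinite process $C_0=P(B,n)$, $C_{\beta+1}=\{\text{weak-* limits of sequences in }C_\beta\}$, $C_\lambda=\cup_{\beta<\lambda}C_\beta$, and the paper explicitly recalls (quoting Banach, McGehee, etc.)\ that for general convex subsets of a separable dual $C_1\neq C_\alpha$; one may have to iterate an arbitrary countable ordinal number of times. So your argument only proves that elements of the \emph{first} stage $C_1$ come from $\Lambda_4(\Lambda_{123}(B))$; it says nothing about $C_2,C_3,\dots,C_\omega,\dots$. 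The paper's proof is a transfinite induction: it defines classes $B_\alpha$ ($B_0=\Lambda_{123}(B)$, $B_{\alpha+1}=\Lambda_{123}(\Lambda_4(B_\alpha))$, union at limits) and sets $C^n_\alpha$ in parallel, and proves the claimed equivalence ordinal by ordinal. Symmetrically, $\mathcal C_1$ (the smallest class stable under all $\Lambda_i$) is not obtained from $B$ by a single round of applications of $\Lambda_1,\dots,\Lambda_4$; it is $\cup_\alpha B_\alpha$. Without the induction, the two sides don't meet. (Note that the remark in the introduction about ``not needing to iterate transfinitely'' concerns the later Theorem~\ref{thm:bipolar_of_operator_explicit}, where the convex hull and Proposition~\ref{prop:bipolar_with_compactness} do enter; Proposition~\ref{prop:sequential_weak*-closure} itself is proved by raw transfinite induction and does not use that compactness.)

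\textbf{2. The norm-convergence claim in Step~1 is false, and the proposed fix is misapplied.} You assert that $\|f_i-g_i\|_p\le\varepsilon$ forces $\|\mu_f-\mu_g\|_{H_n^*}$ small. It does not: take $n=2$, $\Omega=[0,1]$, $f=(1,0)$, $g_k=(1,1/k)$ (constant functions). Then $g_k\to f$ in $L_p$ but the representing measures $\nu_f=\delta_{\K(1,0)}$ and $\nu_{g_k}=c_k\,\delta_{\K(1,1/k)}$ on $\KP^1$ are mutually singular for every $k$, so $\|\mu_f-\mu_{g_k}\|\not\to 0$. What is true, and suffices, is weak-* convergence $\mu_{g_k}\to\mu_f$ (a.e.\ pointwise convergence along a subsequence plus dominated convergence, then Urysohn). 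With that correction, your worry about ``double iteration'' evaporates: the fixed-point set $C_\alpha$ is, by definition, weak-* sequentially closed, so a weak-* convergent sequence from $C_\alpha$ has its limit in $C_\alpha$; there is no need to go up an ordinal. In particular the detour through Corollary~\ref{cor:bipolarsbis} should be dropped; that corollary identifies the weak-* closed convex hull of $P(B\cup REG,n)$ with a norm closure, which is a different (and larger, convexified) set than the sequential closure of the non-convex cone $P(B,n)$.

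\textbf{3. Wrong tool for the weak-* $\Rightarrow$ pointwise translation.} Lemma~\ref{lem:when_muf_and_mug_are_close} and Remark~\ref{rem:when_muf_and_mug_are_close}, which you invoke in Step~3, are about \emph{norm}-close $\mu$'s and give a coupling with disjointly-supported small remainders; they do not apply when you only have weak-* convergence $\nu_k\to\nu$. The paper's mechanism at the successor step is different: it normalizes via Lemma~\ref{lem:dual_Hn} so that $f^{(k)},g^{(k)}$ take values in the unit sphere, gets uniform boundedness from Banach--Steinhaus, passes to a weak-* convergent subsequence of the pushed-forward probability measures on $\{0\}\cup\{|z|=1\}$, and invokes Skorohod's representation theorem to place everything on a common probability space with almost sure (hence $L_p$) convergence, which is exactly what $\Lambda_4$ detects. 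That representation-theorem step is what you would need to supply in place of the norm-coupling lemma.

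So the proposal, as written, has a genuine gap: it collapses a transfinite process to a single step in both directions, and the technical lemmas it relies on do not give the convergence mode actually available.
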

\begin{proof}
  We define by transfinite induction, for every ordinal $\alpha$, a class $B_\alpha$ as follows. $B_0$ is $\Lambda_{123}(B)$. If $\alpha$ is a successor ordinal, $B_{\alpha} =\Lambda_{123}(\Lambda_4(B_{\alpha-1}))$. If $\alpha$ is a limit ordinal we set $B_\alpha = \cup_{\beta<\alpha} B_\beta$.

  Similarly, we define, for every integer $n$ and every ordinal $\alpha$, a subset $C^n_\alpha \subset H_n^*$ by $C^n_0 = P(B,n)$, for a successor ordinal $C^n_{\alpha}$ is the set of all limits of weak-* converging sequences of elements of $C^n_{\alpha-1}$. If $\alpha$ is a limit ordinal we set $C^n_\alpha = \cup_{\beta<\alpha} C^n_\beta$.

  We claim that, for every $T \in \Op$ with $\dom(T)$ finite-dimensional, $P(T,n) \subset C^n_\alpha$ for every $n$ if and only if $T$ belongs to $B_\alpha$. We prove it by transfinite induction. If $\alpha=0$, this is Corollary~\ref{cor:whenmu_f-muTf}. Let $\alpha>0$ and assume that the claim holds for all $\beta<\alpha$. If $\alpha$ is a limit ordinal, the claim is clear.

  So assume that $\alpha$ is a successor. Assume first that $P(T,n) \subset C^n_{\alpha}$ for every $n$. Let $n$ be the dimension of $\dom(T)$ and $f=(f_1,\dots,f_n)$ a basis. Then $\mu_f - \mu_{Tf}$ is a limit of a weak-* converging sequence $\nu_k$ of elements of $C^n_{\alpha-1}$. By Lemma \ref{lem:dual_Hn} there are $f^{(k)} \in L_p(\Omega_k,m_k)^n$ and $g^{(k)} \in L_p(\Omega'_k,m'_k)^n$ with values in $\{z \in \K^n, |z|=1\}$ such that $\nu_k = \mu_{f^{(k)}} - \mu_{g^{(k)}}$ and $m_k(\Omega_k)+m'_k(\Omega'_k)$ is the norm of the corresponding linear form, which is bounded by Banach-Steinhaus. For simplicity of the exposition assume that $m_k(\Omega_k)+m'_k(\Omega'_k) \leq 1$. By the induction hypothesis, there is an operator $S_k \in B_{\alpha-1}$ such that $f^{(k)} \in D(S_k)^n$ and $S_k  f^{(k)} =g^{(k)}$. We have two sequences of probability measures, $ f^{(k)}_* m_k + (1-m_k(\Omega_k))\delta_0$ and $ g^{(k)}_* m'_k + (1-m'_k(\Omega'_k))\delta_0$, on $\{0\} \cup \{z \in \K, |z| =1\} \subset \K^n$. By compactness, up to an extraction we can assume that both sequences converge weak-*, and by Skorohod's representation theorem we can assume that $(\Omega_k,m_k)$ does not depend on $k$ and that $f^{(k)}$ converges almost surely to some $ f^{(\infty)} \in L_p^n$ and similarly $g^{(k)}$ converges almost surely, and in particular in $L_p$, to $g^{(\infty)}$ (this modifies the operators $S_k$, but they still satisfy $\nu_k = \mu_{f^{(k)}} - \mu_{S f^{(k)}}$ and therefore still belong to $B_{\alpha-1}$). In particular, the operator $S^{(\infty)}$ from $\dom(S^{(\infty)}) = \vect\{f^{(\infty)}_1,\dots,f^{(\infty)}_n\} \to \vect\{ g^{(\infty)}_1,\dots,g^{(\infty)}_n\}$ sending $f^{(\infty)}_i$ to $g^{(\infty)}_i$ belongs to $\Lambda_4(B_{\alpha-1})$ and it satisfies
  \[ \mu_{f^{(\infty)}} - \mu_{S^{(\infty)}f^{(\infty)}} = \lim_k \mu_{f^{(k)}} - \mu_{S^{(k)}f^{(k)}} = \mu_f - \mu_{Tf}.\]
  By Corollary \ref{cor:whenmu_f-muTf} again, the restriction of $T$ to $\vect\{f_1,\dots,f_n\} = E$ belongs to $\Lambda_{123}(S^{(\infty)}) \subset B_\alpha$.
This concludes the proof that $P(T,n) \subset C^n_{\alpha}$ for all $n$ implies that the restriction of $T$ belongs to $B_\alpha$. The converse is similar but easier and left to the reader.
\end{proof}

Similarly, norm convergence is well encoded.
\begin{lem}\label{lem:norm_convergence} Let $B \subset \Op$, $T \colon \dom(T) \subset L_p(\Omega_1,m_1) \to L_p(\Omega_2,m_2)$ a linear map with domain of finite dimension $n$, and $(f_1,\dots,f_n)$ be a basis of $\dom(T)$. Then $P(T,n)$ is contained in the norm-closure of $P(B,n)$ if and only if for every $\varepsilon>0$, there is $S \in \Lambda_{123}(B)$ with $\dom(S) \subset L_p(\Omega_1 \times [0,1],m_1 \otimes d\lambda)$ and $\ran(S) \subset L_p(\Omega_2 \times [0,1],m_2 \otimes d\lambda)$, there are $g_1,\dots,g_n \in \dom(S)$ such that
  \[ \int_{\Omega_1\times[0,1]} (|f(\omega)|^p+|g(\omega,s)|^p) \chi_{f(\omega) \neq g(\omega,s)} dm_1(\omega) ds \leq \varepsilon\]
  and
  \[ \int_{\Omega_2 \times [0,1]} (|Tf(\omega)|^p + |Sg(\omega,s)|^p)^{\frac 1 p} \chi_{Tf(\omega) \neq Sg(\omega,s)} dm_2(\omega) ds \leq \varepsilon.\]
\end{lem}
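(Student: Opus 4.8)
The plan is to view this as the norm-convergence counterpart of Proposition~\ref{prop:sequential_weak*-closure}; it is in fact simpler, because to approximate $\mu_f-\mu_{Tf}$ within any prescribed error one only needs a \emph{single} element of $P(B,n)$, so no transfinite iteration is involved. First I would dispose of the ``if'' direction. Given $S\in\Lambda_{123}(B)$ and $g\in\dom(S)^n$ as in the statement, the two displayed integral conditions say, via Lemma~\ref{lem:when_muf_and_mug_are_close} applied with the (spatial isometry) embedding $L_p(\Omega_i,m_i)\hookrightarrow L_p(\Omega_i\times[0,1],m_i\otimes d\lambda)$, that $\|\mu_f-\mu_g\|\le\varepsilon$ and $\|\mu_{Tf}-\mu_{Sg}\|\le\varepsilon$. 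Since $P(\Lambda_i(B),n)=P(B,n)$ for $i=1,2,3$ (the easy directions of Lemmas~\ref{lem:when_mu-mu=mu-mu} and~\ref{lem:when_muf=mug}, as in the proof of Corollary~\ref{cor:whenmu_f-muTf}), we have $\mu_g-\mu_{Sg}\in P(B,n)$, hence $\mu_f-\mu_{Tf}$ lies within $2\varepsilon$ of $P(B,n)$; letting $\varepsilon\to0$ gives $\mu_f-\mu_{Tf}\in\overline{P(B,n)}^{\|\cdot\|}$, and a routine reduction (pulling back by the bounded operators $\varphi\mapsto\varphi\circ A$ on $H_n$, which preserve $P(B,n)$) upgrades this to $P(T,n)\subset\overline{P(B,n)}^{\|\cdot\|}$.

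For the ``only if'' direction, assume $P(T,n)\subset\overline{P(B,n)}^{\|\cdot\|}$ and fix $\varepsilon>0$. I would pick $S'\in B$ and $\phi\in\dom(S')^n$ with $\|\delta\|<\varepsilon$, where $\delta:=(\mu_f-\mu_{Tf})-(\mu_\phi-\mu_{S'\phi})$. Using the Jordan decomposition of $\delta$ and (the proof of) Lemma~\ref{lem:dual_Hn}, write $\delta=\mu_a-\mu_b$; by also throwing into both $a$ and $b$ one fixed $n$-uple of arbitrarily small norm whose joint distribution spans $\K^n$ (for instance a small multiple of $(\chi_{[(i-1)/n,i/n]})_{i=1}^n$ on $[0,1]$), I can arrange that $\|\mu_a\|$ and $\|\mu_b\|$ are as small as I like and that the $n$-uples $F:=(f_i\oplus b_i)_{i=1}^n$ and $G:=(Tf_i\oplus a_i)_{i=1}^n$ (direct sums over disjoint unions) are \emph{linearly independent}. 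Then
\[\mu_F-\mu_G=\mu_\phi-\mu_{S'\phi},\qquad\|\mu_F-\mu_f\|=\|\mu_b\|,\qquad\|\mu_G-\mu_{Tf}\|=\|\mu_a\|.\]

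It remains to transport this into $L_p(\Omega_1\times[0,1])$ and $L_p(\Omega_2\times[0,1])$ without leaving $\Lambda_{123}(B)$. By Lemma~\ref{lem:when_muf_and_mug_are_close} and Remark~\ref{rem:when_muf_and_mug_are_close} I would replace $F$ by a linearly independent $n$-uple $\hat g$ in $L_p(\Omega_1\times[0,1],m_1\otimes d\lambda)$ with $\mu_{\hat g}=\mu_F$ and $\int_{\Omega_1\times[0,1]}(|f|^p+|\hat g|^p)\chi_{f\ne\hat g}<\varepsilon$, and replace $G$ by $\widehat{Tf}$ in $L_p(\Omega_2\times[0,1],m_2\otimes d\lambda)$ with $\mu_{\widehat{Tf}}=\mu_G$ and the analogous two properties. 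Now $\mu_{\hat g}-\mu_{\widehat{Tf}}=\mu_\phi-\mu_{S'\phi}$, so Lemmas~\ref{lem:when_mu-mu=mu-mu} and~\ref{lem:when_muf=mug} furnish $n$-uples $h,\tilde h$ and spatial isometries carrying $\hat g_i\oplus h_i$ to $\phi_i\oplus\tilde h_i$ and $S'\phi_i\oplus\tilde h_i$ to $\widehat{Tf}_i\oplus h_i$. Pre- and post-composing $S'\oplus\mathrm{id}_{L_p(\Omega_{\tilde h})}\in\Lambda_1(B)$ with these isometries gives $S_2\in\Lambda_2(\Lambda_1(B))$ with $S_2(\hat g_i\oplus h_i)=\widehat{Tf}_i\oplus h_i$; its domain lies in $L_p(\Omega_1\times[0,1])\oplus L_p(\Omega_h)$, its range in $L_p(\Omega_2\times[0,1])\oplus L_p(\Omega_h)$, the $L_p(\Omega_h)$-coordinate is preserved, and since $\{\hat g_i\}$ is linearly independent $S_2$ automatically has the form $(\psi_1,\psi_2)\mapsto(S_3\psi_1,\psi_2)$ for a well-defined $S_3$ with $\dom(S_3)=\vect\{\hat g_i\}$. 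Hence $S_3\in\Lambda_3(\Lambda_2(\Lambda_1(B)))=\Lambda_{123}(B)$, has domain in $L_p(\Omega_1\times[0,1])$ and range in $L_p(\Omega_2\times[0,1])$, and $S_3\hat g_i=\widehat{Tf}_i$; taking $S:=S_3$ and $g:=\hat g$, and running the whole argument with a smaller parameter in place of $\varepsilon$, produces the two required inequalities.

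The ``if'' direction and the realization $\delta=\mu_a-\mu_b$ are routine, so the real difficulty lies in the last paragraph: keeping the operator built from $S'\oplus\mathrm{id}$ inside $\Lambda_{123}(B)$ while forcing its domain and range into $L_p(\Omega_1\times[0,1])$ and $L_p(\Omega_2\times[0,1])$. One cannot simply post-compose the constructed operator with a spatial isometry, because $\Lambda_3$ produces operators with finite-dimensional, graph-like domains and $\Lambda_2\circ\Lambda_{123}$ is not visibly contained in $\Lambda_{123}$; the isometries of Remark~\ref{rem:when_muf_and_mug_are_close} must therefore be folded into the $\Lambda_2$-step, which is exactly why the asymmetric version in that remark (and the fact that it also works on the range side) is needed. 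A second subtlety is ensuring the operator extracted by $\Lambda_3$ is well defined and genuinely of the form $(\psi_1,\psi_2)\mapsto(S_3\psi_1,\psi_2)$: deriving this from the earlier lemma characterizing operators $(f,g)\mapsto(Sf,g)$ would need a norm bound on $S'$ that we do not have, so instead one obtains it for free by enlarging the auxiliary families so that $\{\hat g_i\}$ is linearly independent — the sole reason for the small auxiliary $n$-uple above.
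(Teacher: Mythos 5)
Your proof is correct and follows essentially the same route as the paper: approximate $\mu_f-\mu_{Tf}$ by an element $\mu'$ of $P(B,n)$, split the approximation error via a Jordan-type decomposition to get a decomposition $\mu'=\mu_g-\mu_h$ with $\|\mu_f-\mu_g\|+\|\mu_{Tf}-\mu_h\|<\varepsilon$, perturb to linear independence so the sending map is well-defined, show it lies in $\Lambda_{123}(B)$, and then transport it into $L_p(\Omega_i\times[0,1])$ via Remark~\ref{rem:when_muf_and_mug_are_close}. The paper compresses the middle step by invoking Corollary~\ref{cor:whenmu_f-muTf} outright (first noting $P(S,m)\subset P(B,m)$ for all $m$ because a matrix $A$ acting on the $n$-uples corresponds to precomposition with $\varphi\mapsto\varphi\circ A$ on $H_n$), whereas you unpack that corollary inline via Lemmas~\ref{lem:when_mu-mu=mu-mu} and~\ref{lem:when_muf=mug}; both are fine.

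One remark on your stated ``first subtlety'': the worry that ``$\Lambda_2\circ\Lambda_{123}$ is not visibly contained in $\Lambda_{123}$'' is in fact unfounded. If $S_3\in\Lambda_3(C)$ is witnessed by $T\in C$ with $T(\psi_1\oplus\psi_2)=S_3\psi_1\oplus\psi_2$, and $U,V$ are spatial isometries, then $(V\oplus\mathrm{id})\circ T\circ(U\oplus\mathrm{id})\in\Lambda_2(C)$ witnesses $VS_3U\in\Lambda_3(\Lambda_2(C))$; taking $C=\Lambda_2(\Lambda_1(B))$ and using that spatial isometries form a group gives $\Lambda_2(\Lambda_{123}(B))\subset\Lambda_{123}(B)$. (Alternatively, $\mu_{U\xi}=\mu_\xi$ for any spatial isometry $U$, so $P(VS_3U,m)=P(S_3,m)$ and one re-invokes Corollary~\ref{cor:whenmu_f-muTf}.) The paper uses this containment directly when it says ``and so does $S$ composed with any spatial isometry''. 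Your choice to fold the isometries of Remark~\ref{rem:when_muf_and_mug_are_close} into the $\Lambda_2$-step is therefore an unnecessary precaution, though it does no harm and your construction is correct as written. Your ``second subtlety'' (using linear independence rather than a norm bound to get the $\Lambda_3$-form) matches the paper exactly.
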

\begin{proof}
  Assume that $P(T,n)$ is contained in the norm closure of $P(B,n)$. This means that for every $\varepsilon>0$, there $\mu' \in P(B,n)$ such that $\|\mu_f - \mu_{Tf} - \mu'\| < \varepsilon$. By Lemma~\ref{lem:jordan_decomposition}, we can write $\mu'=\mu_g-\mu_h$ for $n$-uples of elements of $L^p$ spaces $g,h$ where $\|\mu_f - \mu_g\|+\|\mu_{Tf} - \mu_h\| < \varepsilon$. Since we have some room $(<\varepsilon$), we can even assume that $\{g_1,\dots,g_n\}$ are linearly independant, so that we can define a linear map $S$ sending $g_i$ to $h_i$. By Corollary~\ref{cor:whenmu_f-muTf}, $S$ belongs to $\Lambda_{123}(B)$, and so does $S$ composed with any spatial isometry. So the only if direction follows from Lemma~\ref{lem:when_muf_and_mug_are_close} and its improvement in Remark~\ref{rem:when_muf_and_mug_are_close} .

The converse is proved the same way.
\end{proof}

\subsection{Proof of the main Theorem}
We are also ready to prove our main Theorem~\ref{thm:bipolar_of_operator_explicit}. Before we do so, we only need to understand the operation of taking convex hulls.
\begin{lem}\label{lem:understandP} Let $B\subset \Op$ and $n \in \N$. The convex hull of $P(B,n)$ is equal to $P(\oplus_{\ell_p}(B),n)$ where $\oplus_{\ell_p}(B)$ is the class of all finite $\ell_p$-direct sums of operators in $B$.
\end{lem}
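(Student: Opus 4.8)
The plan is to mimic exactly the argument used for the analogous statement about $N$ in Lemma~\ref{lem:understandN}(2), exploiting that $P(B,n)$ is a cone (so that its convex hull is just the set of finite sums of its elements) and that the $\ell_p$-direct sum of operators behaves additively under the map $T\mapsto \mu_f-\mu_{Tf}$. First I would record the two easy inclusions. For the inclusion $P(\oplus_{\ell_p}(B),n)\subset \operatorname{conv} P(B,n)$: take $S=S_1\oplus\cdots\oplus S_k$ with each $S_j\in B$, acting from $\dom(S_1)\oplus\cdots\oplus\dom(S_k)\subset L_p(\Omega_1\cup\cdots\cup\Omega_k,m_1\oplus\cdots\oplus m_k)$, and take $f=(f_1,\dots,f_n)\in\dom(S)^n$. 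Writing $f_i = f_i^{(1)}\oplus\cdots\oplus f_i^{(k)}$ with $f_i^{(j)}\in\dom(S_j)$, the additivity of the integral over a disjoint union gives $\mu_f=\sum_{j=1}^k \mu_{f^{(j)}}$ and $\mu_{Sf}=\sum_{j=1}^k\mu_{S_jf^{(j)}}$ (where $f^{(j)}=(f_1^{(j)},\dots,f_n^{(j)})$), hence $\mu_f-\mu_{Sf}=\sum_{j=1}^k(\mu_{f^{(j)}}-\mu_{S_jf^{(j)}})$, which is a sum of $k$ elements of $P(B,n)$ and therefore lies in $\operatorname{conv}P(B,n)$ since $P(B,n)$ is a cone.

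For the reverse inclusion, I would show that the set of finite sums $\{\nu_1+\cdots+\nu_k : k\in\N,\ \nu_j\in P(B,n)\}$ — which equals $\operatorname{conv}P(B,n)$ because $P(B,n)$ is a cone — is contained in $P(\oplus_{\ell_p}(B),n)$. Given $\nu_j=\mu_{f^{(j)}}-\mu_{T_jf^{(j)}}$ with $T_j\in B$ and $f^{(j)}=(f^{(j)}_1,\dots,f^{(j)}_n)\in\dom(T_j)^n$, form $T=T_1\oplus\cdots\oplus T_k\in\oplus_{\ell_p}(B)$ and the $n$-uple $f$ with $f_i=f^{(1)}_i\oplus\cdots\oplus f^{(k)}_i$, which lies in $\dom(T)^n$ by construction. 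The same disjoint-union additivity of the integral used above then yields $\mu_f-\mu_{Tf}=\sum_j\nu_j$, so $\sum_j\nu_j\in P(\oplus_{\ell_p}(B),n)$. Combining the two inclusions with the identity $\operatorname{conv}P(B,n)=\{\text{finite sums of elements of }P(B,n)\}$ gives the claim.

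I do not expect a serious obstacle here; the only point requiring a line of care is the identification of the convex hull of the cone $P(B,n)$ with the set of finite sums of its elements (if $\nu=\sum_j t_j\nu_j$ with $t_j\ge 0$, $\sum_j t_j=1$, then each $t_j\nu_j\in P(B,n)$ since $P(B,n)$ is a cone, so $\nu$ is a finite sum; conversely a finite sum $\sum_j\nu_j$ equals $k\cdot\frac1k\sum_j\nu_j$ and $\frac1k\sum_j\nu_j$ is a convex combination, and then multiplying by $k$ stays in the cone $\operatorname{conv}P(B,n)$) — exactly as in the proof of Lemma~\ref{lem:understandN}. One should also note that the bookkeeping of which $L_p$ space each piece lives in is handled by the conventions fixed in the Notation section for $\ell_p$-direct sums of operators and of measure spaces, so no new measure-theoretic input is needed.
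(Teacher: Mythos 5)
Your proposal is correct and matches the paper's argument: both rest on the same identity $\mu_f - \mu_{(T_1\oplus\cdots\oplus T_k)f} = \sum_j (\mu_{f^{(j)}} - \mu_{T_j f^{(j)}})$ with $f_i = f_i^{(1)}\oplus\cdots\oplus f_i^{(k)}$, together with the observation that the convex hull of a cone is the set of finite sums of its elements. The paper simply states this more tersely, leaving the reverse inclusion and the cone bookkeeping implicit.
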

\begin{proof}
This is clear: if $T_1,\dots,T_k \in B$ and $f^{(j)} \in D(T_j)^n$ for all $j$, then 
\[ \sum_j \mu_{f^{(j)}} - \mu_{Tf^{(j)}} = \mu_{f} - \mu_{(T_1\oplus \dots \oplus T_k)f}\]
where $f_i = f_i^{(1)}\oplus \dots \oplus f_i^{(k)} \in D(T_1)\oplus \dots D(T_k)$ and $f = (f_1,\dots,f_n) \in (D(T_1)\oplus \dots D(T_k))^n$.
\end{proof}

We can conclude.
\begin{proof}[Proof of Theorem~\ref{thm:bipolar_of_operator_explicit}]
  We start with the easy direction. Assume that for every $n$ and $\varepsilon$, the assumption in the second bullet point holds. Let $X$ be Banach space such that $\sup_{S \in B} \|S_X\|\leq 1$. We have to prove that $\|T_X\|\leq 1$. That is, for every integer $n$ and every $x_1,\dots,x_n$,
  \begin{equation}\label{eq:TX_norm1} \|\sum_i (Tf_i) x_i\|_{L_p(\Omega_2;X)} \leq \|\sum_i f_i x_i\|_{L_p(\Omega_1,X)}.\end{equation}
  Let $\varepsilon>0$, and $S=S_0\oplus S_1 \dots S_k$, $U$, $V$, $g_i,g'_i,h_i$ given by the assumption. In the following computation, we view $Tf_i$ as an element of $L_p(\Omega_2\times[0,1])$ that does not depend on the second variable in $[0,1]$, and similarly for $f_i$. We denote simply by $\|\cdot\|_p$ the norm in $L_p(\Omega_i\times[0,1];X)$ or $L_p(\Omega_i\times[0,1])$. We can bound
  \begin{align*}
    \|\sum_i (Tf_i) x_i\|_{L_p(\Omega_2;X)} &\leq \sum_i \|Tf_i - g'_i\|_p \|x_i\|+\|\sum_i g'_i x_i\|_{p}\\
    & \leq \varepsilon \sum_i \|x_i\| + \|\sum_i g'_i x_i\|_{p}\\
    & = \varepsilon \sum_i \|x_i\| + \left(\|\sum_i (g'_i,h_i) x_i\|_{p}^p - \|\sum_i h_i x_i\|_p^p\right)^{\frac 1 p}.
  \end{align*}
  The quantity inside the parenthesis is equal to
  \[ \|\sum_i S (g_i,h_i) x_i\|_p^p - \|\sum_i h_i x_i\|_p^p,\]
  so using that $\|S_X\| = \max_{0 \leq i \leq k} \|(S_i)_X\|\leq 1$, we obtain that it is bounded above by
    \[ \|\sum_i (g_i,h_i) x_i\|_p^p - \|\sum_i h_i x_i\|_p^p= \|\sum_i g_i x_i\|_p^p.\]
    We can therefore go on with our computation and get
      \begin{align*}
        \|\sum_i (Tf_i) x_i\|_{L_p(\Omega_2;X)} & \leq \varepsilon \sum_i \|x_i\| + \|\sum_i g_i x_i\|_p\\
        &  \leq \varepsilon \sum_i \|x_i\| + \sum_i \|f_i - g_i\|_p \|x_i\|+ \|\sum_i f_i x_i\|_p\\
        & \leq 2 \varepsilon \sum_i \|x_i\|+ \|\sum_i f_i x_i\|_p.
      \end{align*}
      Making $\varepsilon\to 0$, we obtain \eqref{eq:TX_norm1} as required.

      The converse direction relies on everything we have obtained so far. Assume that $T \in (\prep{B})^\circ$. We know from Corollary~\ref{cor:bipolarsbis} that for every integer $n$, $P(T,n) \subset \overline{\mathrm{conv}}^{\|\cdot\|}P(B\cup REG,n)$, which is the same as the norm-closure of $P(\oplus_{\ell_p}(B \cup REG))$ by Lemma~\ref{lem:understandP}. So by Lemma~\ref{lem:norm_convergence}, for every $\varepsilon>0$ there is $S \in \Lambda_{123}(B \cup REG)$ with $\dom(S) \subset L_p(\Omega_1 \times [0,1],m_1 \otimes d\lambda)$ and $\ran(S) \subset L_p(\Omega_2 \times [0,1],m_2 \otimes d\lambda)$, there are $g_1,\dots,g_n \in \dom(S)$ such that
  \[ \int_{\Omega_1\times[0,1]} (|f(\omega)|^p+|g(\omega,s)|^p) \chi_{f(\omega) \neq g(\omega,s)} dm_1(\omega) ds \leq \varepsilon^p\]
  and
  \[ \int_{\Omega_2 \times [0,1]} (|Tf(\omega)|^p + |Sg(\omega,s)|^p)^{\frac 1 p} \chi_{Tf(\omega) \neq Sg(\omega,s)} dm_2(\omega) ds \leq \varepsilon^p.\]
In particular, using that $\int (|a|^p+|b|^p) \chi_{a \neq b} \geq \int |a-b|^p = \sum_i \int |a_i - b_i|^p$ for every $a,b \in (L_p)^n$, we have for every $i$,
  \[ \left(\int_{\Omega_1\times[0,1]} |f_i(\omega)-g_i(\omega,s)|^p dm_1(\omega) ds\right)^{\frac 1 p} \leq \varepsilon\]
  and
  \[ \left(\int_{\Omega_2 \times [0,1]} |Tf_i(\omega) - Sg_i(\omega,s)|^p dm_2(\omega) ds\right)^{\frac 1 p}  \leq \varepsilon.\]
  Also, using that $REG$ contains the identity and is stable by $\ell_p$-direct sums, $\Lambda_1(\oplus_{\ell_p} (B \cup REG))$ is the set of all operators of the form $S_0 \oplus S_1 \oplus \dots \oplus S_k$ for $S_0 \in REG$ and $S_1,\dots,S_k \in B$. So the fact that $S$ belongs to $\Lambda_{123}(B \cup REG)$ means that there exist $S_0 \in REG$, $S_1,\dots,S_k \in B$, spatial isometries $U,V$ and a measure space $(\Omega,m)$ such that $V \circ (S_0\oplus\dots \oplus S_k) \circ V$ contains elements of the form $(g_i,h_i)$ in its domain for all $1 \leq i \leq n$ and some $h_i \in L_p(\Omega,m)$ and so that $V \circ (S_0\oplus\dots \oplus S_k) \circ V (g_i,h_i) = (S g_i, h_i)$. We can of course replace $(\Omega,m)$ by any standard measure space as this amounts to conjugating by another spatial isometry. So we have obtained the conclusion of the theorem.
\end{proof}

\section{Variants of the duality}\label{section:comparison}

The duality defined in the Introduction was implicit in many early works on the geometry of Banach spaces, and was essentially present in \cite{MR2732331}, where Pisier explicitly considered a duality that is very close to ours. He defines the polars by the same formulas as in Definition~\ref{defn:polar} and \ref{defn:prepolar}, but with different classes of operators instead of $\Op$.
\begin{defn} Denote by $\Opf$ the class of all linear operators between (full, as opposed to subspace of) $L_p$ spaces. If $n$ is an integer, denote $\Opfn$ the class of all linear operators $\ell_p^n \to L_p$ and $\Opff = \cup_{n \in \N} \Opfn$.
\end{defn}
In particular, for the duality between $\Banach$ and $\Opf$, the polar of a set $B$ of Banach spaces is smaller than for the duality between $\Banach$ and $\Op$, and therefore its bipolar is larger. Hernandez also obtained a description of the bipolar $B$ for this duality: it is the set of Banach spaces that are finitely representable in subspaces of quotients of finite $\ell_p$ direct sums of spaces in $B$, see Theorem~\ref{thm:hernandezbis}. This is quite different from Theorem \ref{thm:hernandez}. For example for the duality considered here, every Banach space in the bipolar of $\ell_1$ has cotype $\max(p,2)$ (this is immediate from Hernandez's Theorem \ref{thm:hernandez} and the fact that $\ell_p(\ell_1)$ has cotype $\max(p,2)$), whereas for the duality in \cite{MR2732331}, the bipolar of $\ell_1$ contains every space finitely representable in a quotient of $\ell_1$, \emph{i.e.} every Banach space.

However, as far as the bipolar of a set of operators is concerned, the two dualities are very related~: if $B \subset \Opf$, then its bipolar for the polarity between $\Banach$ and $\Opf$ is the set of operators between $L_p$ spaces which belong to $\prep B^{\circ}$ (for the polarity between $\Banach$ and $\Op$). So our Theorem~\ref{thm:bipolar_of_operator_explicit} also provides an answer to \cite[Problem 4.1]{MR2732331}.

The dualities discussed so far are isometric variants of two other isomorphic forms of the duality in \cite{MR2732331}, where $A^\circ$ is the class of operators such that $\|T_X\|<\infty$ for all $X \in A$, and $\prep{B}$ is the class of Banach space such such $\|T_X\|<\infty$ for all $T \in B$. But, if $B$ is finite, the bipolar of $B$ for this ``isomorphic'' duality coincides with $\cup_{R>0} R\prep{(R^{-1} B)}^\circ$. If $B$ is infinite, the ``isomorphic'' bipolar of $B$ is $\cup_{R>0} \cup_{B'} R (\prep{B'})^\circ$, where $B' = \{ \{c_T T \mid T \in B\} \mid c \in (0,1]^B\}$. So our bipolar Theorem~\ref{thm:bipolar_of_operator_explicit} also allows to describe the bipolar for the isomorphic forms of the duality.

It turns out that the methods of this paper also allow us to recover Hernandez's characterization of the bipolar of a set of Banach spaces for the duality between $\Banach$ and $\Opf$. Let us start with an easy fact, which allows us to resctrict our attention to $\Opfn$.
\begin{lem}\label{lem:finite_dim_restriction} The bipolar of a subset $A \subset \Banach$ for the duality between $\Banach$ and $\Opf$ coincides with its bipolar for the  duality between $\Banach$ and $\Opff$. 
\end{lem}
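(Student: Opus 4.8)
The plan is to prove the only nontrivial inclusion by a direct approximation argument. Write $A^{\circ}=\{T\in\Opf \mid \|T_X\|\le 1 \text{ for all }X\in A\}$; since $\Opff\subseteq\Opf$, the polar of $A$ for the duality $\dual{\Banach}{\Opff}$ is exactly $A^{\circ}\cap\Opff$, so the two bipolars in question are $\prep{(A^{\circ})}$ and $\prep{(A^{\circ}\cap\Opff)}$, and the inclusion $\prep{(A^{\circ})}\subseteq\prep{(A^{\circ}\cap\Opff)}$ is clear. The content is the reverse inclusion: if $X$ satisfies $\|T_X\|\le 1$ for every $T\in\Opff$ with $\|T_Y\|\le 1$ for all $Y\in A$, then $\|S_X\|\le 1$ for every $S\in A^{\circ}$. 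We may assume that $A$ contains a space of positive dimension, for otherwise $A^{\circ}=\Opf$, $A^{\circ}\cap\Opff=\Opff$, and both bipolars are readily seen to consist of the zero space only (the operator $2\,\mathrm{id}$ on $L_p([0,1])$, resp.\ the rank-one operator $\K\to L_p([0,1])$ sending $1$ to $2\chi_{[0,1]}$, has $\|\cdot_X\|=2$ for every nonzero $X$). Under this assumption, fixing $Y_0\in A$ of positive dimension and plugging a single nonzero vector of $Y_0$ into the definition of $\|\cdot_{Y_0}\|$ shows that every $S\in A^{\circ}$ is a contraction $L_p(\Omega_1,m_1)\to L_p(\Omega_2,m_2)$.

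So fix such a contraction $S$, an integer $n$, functions $f_1,\dots,f_n\in L_p(\Omega_1,m_1)$, vectors $x_1,\dots,x_n\in X$, and $\varepsilon>0$. First I would replace each $f_i$ by a simple function $\tilde f_i$ with $\|f_i-\tilde f_i\|_p<\varepsilon$; refining the level sets of $\tilde f_1,\dots,\tilde f_n$ we obtain finitely many pairwise disjoint sets $A_1,\dots,A_N$ of finite positive measure, covering all the supports, on each of which every $\tilde f_i$ is constant. Let $\iota\colon \ell_p^N\to L_p(\Omega_1,m_1)$ be the isometric embedding $e_j\mapsto m_1(A_j)^{-1/p}\chi_{A_j}$; its range is spanned by disjointly supported norm-one functions, and a one-line computation with disjoint supports shows that $\iota\otimes\mathrm{id}_Y$ is isometric for every Banach space $Y$, hence $\|\iota_Y\|=1$. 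Therefore $T:=S\circ\iota$ lies in $\Opff$ and, by submultiplicativity of $\|\cdot_Y\|$ under composition, $\|T_Y\|\le\|S_Y\|\,\|\iota_Y\|\le 1$ for every $Y\in A$; so $T\in A^{\circ}\cap\Opff$ and, by hypothesis, $\|T_X\|\le 1$.

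Writing $\tilde f_i=\iota(v_i)$ with $v_i\in\ell_p^N$, I would apply $\|T_X\|\le 1$ to the tensor $\sum_i v_i\otimes x_i\in\ell_p^N\otimes X$ and use that $\iota\otimes\mathrm{id}_X$ is isometric on both sides to obtain $\|\sum_i(S\tilde f_i)x_i\|_{L_p(\Omega_2,m_2;X)}\le\|\sum_i\tilde f_i x_i\|_{L_p(\Omega_1,m_1;X)}$. Since $\|S\|\le 1$ gives $\|Sf_i-S\tilde f_i\|_p<\varepsilon$, two applications of the triangle inequality yield
\[ \Big\|\sum_i (Sf_i)x_i\Big\|_{L_p(\Omega_2,m_2;X)} \le \Big\|\sum_i f_i x_i\Big\|_{L_p(\Omega_1,m_1;X)} + 2\varepsilon\sum_i\|x_i\|. \]
Letting $\varepsilon\to 0$, and then taking the supremum over $n$, over $f_1,\dots,f_n$ and over $x_1,\dots,x_n$, gives $\|S_X\|\le 1$, so $X\in\prep{(A^{\circ})}$, as wanted.

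I do not expect a real obstacle: the one point that needs a little care is the verification that $\iota\otimes\mathrm{id}_Y$ stays isometric --- equivalently, that a finite-dimensional subspace of an $L_p$ space spanned by finitely many functions is, up to an arbitrarily small perturbation of a chosen generating family, supported on finitely many atoms and hence sits isometrically, and compatibly with all Banach-valued coefficients, inside some $\ell_p^N$ --- together with the bookkeeping of the two $\varepsilon$-approximations; both are routine.
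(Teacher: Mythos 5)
Your proof is correct and takes essentially the same approach as the paper. The paper's own proof is a single sentence --- ``Any $L_p$ space can be written as the closure of an increasing net of finite dimensional $L_p$ spaces, which are isomorphic to $\ell_p^n$'' --- and your argument is exactly the careful implementation of that observation: approximate the $f_i$ by simple functions, realize them inside an isometrically embedded $\ell_p^N$ via disjointly supported characteristic functions (so the embedding stays isometric after tensoring with any $Y$), apply the hypothesis to $S\circ\iota\in\Opff$, and remove the $\varepsilon$ at the end using $\|S\|\le 1$ (which you correctly justify, including the degenerate case where $A$ contains only the zero space).
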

\begin{proof} Any $L_p$ space can be written as the closure of an increasing net of finite dimensional $L_p$ spaces, which are isomorphic to $\ell_p^n$.
\end{proof}
Denote by $e=(e_1,\dots,e_n)$ the standard basis of $\ell_p^n$. We encode a subset $B \subset \Opfn$, as the cone $\widetilde{P}(B,n) \subset H_n^*$
\[ \widetilde{P}(B,n) = \{s(\mu_e - \mu_{Te}) \mid T \in B,s >0\}.\]
We warn the reader that $\widetilde{P}(B,n)$ is strictly smaller than $P(B,n)$ even for $B \subset \Opfn$. Note however that the duality is still efficiently encoded. The following result is the analogue of Lemma~\ref{lem:polarity_basic} and is proved identically.
\begin{lem}\label{lem:polarity_basic_bis} Let $n \in \N$ be an integer, $A \subset \Banach$ be a class of Banach spaces and $B \subset \Opfn$ a class of operators $\ell_p^n \to L_p$. 
\begin{enumerate}
\item\label{item:Pofpolarbasicbis}$B \subset A^\circ$ if and only if $\widetilde{P}(B,n) \subset N(A,n)^\circ$. 
\item\label{item:Nofpolarbasicbis} $A \subset \prep{B}$ if and only if $N(A,n) \subset \prep{\widetilde{P}(B,n)}$.
\end{enumerate}
\end{lem}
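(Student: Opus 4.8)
The plan is to deduce both equivalences from the single identity behind Lemma~\ref{lem:polarity_basic}, specialised to the generator $\mu_e-\mu_{Te}$. First I would record that, for $T\in\Opfn$ and $\varphi\in H_n$ of the form $\varphi(z)=\|\sum_{i=1}^n z_ix_i\|_X^p$ with $x_1,\dots,x_n\in X$, the defining formula \eqref{eq:def_muf} gives
\[
  \langle \mu_e-\mu_{Te},\varphi\rangle
  \;=\;\sum_{i=1}^n\|x_i\|^p-\left\|\sum_{i=1}^n (Te_i)x_i\right\|_{L_p(X)}^p
  \;=\;\|a\|_{\ell_p^n(X)}^p-\|(T\otimes\mathrm{id}_X)a\|^p ,
\]
where $a=\sum_i e_ix_i$ is viewed in $\dom(T)\otimes X=\ell_p^n(X)$. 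The point that makes the \emph{single} generator $\mu_e-\mu_{Te}$ sufficient here --- in contrast with the cone $P(B,n)$, which requires all the $\mu_f-\mu_{Tf}$, and which is exactly where the hypothesis $B\subset\Opfn$ is used --- is that as $(x_1,\dots,x_n)$ runs over $X^n$ the vector $a$ runs over all of $\dom(T)\otimes X$. Hence $\langle\mu_e-\mu_{Te},\varphi\rangle\ge 0$ for every $\varphi\in N(X,n)$ if and only if $\polar{T}{X}\le 1$.

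Granting this, the remainder is bookkeeping identical to the deduction of Lemma~\ref{lem:polarity_basic}. Since $\widetilde P(B,n)$ is a cone, $\widetilde P(B,n)\subset N(A,n)^\circ$ means precisely that $\langle\mu_e-\mu_{Te},\varphi\rangle\ge 0$ for every $T\in B$ and every $\varphi\in N(A,n)$, and $\prep{\widetilde P(B,n)}$ equals $\{\varphi\in H_n\mid \langle\mu_e-\mu_{Te},\varphi\rangle\ge 0\text{ for all }T\in B\}$. Using $N(A,n)=\bigcup_{X\in A}N(X,n)$, each of the conditions $\widetilde P(B,n)\subset N(A,n)^\circ$ and $N(A,n)\subset\prep{\widetilde P(B,n)}$ then unwinds to the \emph{same} statement, namely $\polar{T}{X}\le 1$ for all $T\in B$ and all $X\in A$. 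By Definitions~\ref{defn:polar} and~\ref{defn:prepolar} this says exactly $B\subset A^\circ$, and also exactly $A\subset\prep{B}$, giving (\ref{item:Pofpolarbasicbis}) and (\ref{item:Nofpolarbasicbis}) respectively.

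I do not anticipate any real obstacle: the argument is the verbatim analogue of the one for Lemma~\ref{lem:polarity_basic}. The only points requiring a word of care are the identification $\dom(T)\otimes X=\ell_p^n\otimes X\cong\ell_p^n(X)$ together with the surjectivity of $(x_i)_i\mapsto\sum_i e_ix_i$ onto it --- this is the crux, and is what fails for operators with more general domains --- and the harmless observation that $\widetilde P(B,n)$ need not literally contain $0$, which is irrelevant for polar computations because $\langle x^*,0\rangle=0$.
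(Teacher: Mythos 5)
Your proof is correct, and it is the argument the paper has in mind: the paper gives no separate proof of this lemma, stating only that it ``is proved identically'' to Lemma~\ref{lem:polarity_basic}, namely by unwinding the pairing $\langle \mu_e-\mu_{Te},\varphi\rangle$ against $\varphi(z)=\|\sum_i z_i x_i\|^p$ and matching it with the condition $\polar{T}{X}\le 1$. Your added observation --- that the single generator $\mu_e-\mu_{Te}$ already captures $\polar{T}{X}\le 1$ because $\dom(T)=\ell_p^n$ is spanned by $e$, so that $a=\sum_i e_i x_i$ exhausts $\ell_p^n(X)$ as $(x_i)$ ranges over $X^n$ --- is exactly the point that makes ``proved identically'' honest rather than glib, and correctly pinpoints where the restriction $B\subset\Opfn$ is used.
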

Moreover, we have
\begin{lem}\label{lem:polar_of_Ptilde} The subset $\widetilde{P}(\Opfn,n) \subset H_n^*$ is a weak-* closed convex cone. Its polar is
  \[ C_n:=\{ \varphi \in H_n \mid \varphi \leq 0, \varphi(e_1)=\dots=\varphi(e_n)=0\}.\]
\end{lem}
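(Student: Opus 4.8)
The plan is to identify the polar of $\widetilde{P}(\Opfn,n)$ by a direct description of which linear operators $T\colon \ell_p^n \to L_p$ appear, and then invoke the bipolar theorem to deduce that $\widetilde{P}(\Opfn,n)$ is weak-* closed. First I would observe that $\widetilde{P}(\Opfn,n)$ is a cone by construction (it is closed under multiplication by positive scalars), and it is convex: given $T,T'\in\Opfn$ and $s,s'>0$, the element $s(\mu_e-\mu_{Te}) + s'(\mu_e-\mu_{T'e})$ can be realized as $(s+s')(\mu_{e}-\mu_{T''e})$ where $T''\colon\ell_p^n\to L_p\oplus_p L_p$ sends $e_i$ to $\bigl((\tfrac{s}{s+s'})^{1/p}Te_i,\ (\tfrac{s'}{s+s'})^{1/p}T'e_i\bigr)$; this is the same rescaling trick used in Lemma \ref{lem:understandP}. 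Since every separable $L_p$ space embeds isometrically in a single fixed one, $T''$ is again (after composing with an isometry) an element of $\Opfn$, so the cone is convex.

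Next I would compute the polar. By definition, a function $\varphi\in H_n$ lies in $\prep{\widetilde{P}(\Opfn,n)}$ iff $\langle \mu_e-\mu_{Te},\varphi\rangle\geq 0$ for every $T\in\Opfn$, i.e. iff $\varphi(e_1,\dots,e_n) = \langle\mu_e,\varphi\rangle \geq \langle\mu_{Te},\varphi\rangle = \int \varphi(Te_1(\omega),\dots,Te_n(\omega))\,dm(\omega)$ for every such $T$. Now $\varphi(e_i)$ here means $\varphi$ evaluated at the $i$-th standard basis vector of $\K^n$, and $\langle\mu_e,\varphi\rangle$ is literally $\sum_i\varphi(e_i)$ only if $e$ is viewed as a tuple in a one-point space of mass $\dots$ — more carefully, $\mu_e$ for the standard basis $e$ of $\ell_p^n$ sitting inside $L_p$ is $\sum_{i=1}^n \varphi(\text{unit vector } e_i)$ weighted appropriately; I should just write $\langle\mu_e,\varphi\rangle = \sum_{i=1}^n\varphi(e_i)$ directly from \eqref{eq:def_muf} applied with $\Omega=\{1,\dots,n\}$ counting measure and $e_i$ the $i$-th coordinate function. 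So the condition becomes $\sum_i\varphi(e_i) \geq \int\varphi(Te_1,\dots,Te_n)\,dm$ for all $T$. Taking $T=0$ forces $\sum_i\varphi(e_i)\geq 0$. Taking $T$ with arbitrarily large mass concentrated where $\varphi$ is positive forces $\varphi\leq 0$ everywhere on $\K^n$: indeed if $\varphi(z_0)>0$ for some $z_0$, choose $T$ sending $e_i\mapsto z_0^{(i)}\chi_A$ with $m(A)$ large, making the right side $\to+\infty$. Combined with $\varphi\leq 0$ this gives $\varphi(e_i)\leq 0$ for each $i$, while $\sum_i\varphi(e_i)\geq 0$ forces $\varphi(e_i)=0$ for all $i$. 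Conversely, if $\varphi\leq 0$ and $\varphi(e_i)=0$ for all $i$, then the left side is $0$ and the right side is $\leq 0$, so the inequality holds. This shows $\prep{\widetilde{P}(\Opfn,n)} = C_n$ exactly as stated.

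Finally, for the weak-* closedness: $\widetilde{P}(\Opfn,n)$ is a convex cone, so by the bipolar theorem (Theorem \ref{thm:bipolar_Banach}, the weak-* version in the dual $H_n^* = C(\KP^{n-1})^*$) its bipolar $(\prep{\widetilde{P}(\Opfn,n)})^\circ = C_n^\circ$ equals the weak-* closure of $\widetilde{P}(\Opfn,n)$ (the convex hull and union with $\{0\}$ being already absorbed since it is a convex cone containing $0$). So it remains to check that $\widetilde{P}(\Opfn,n)$ is already equal to $C_n^\circ$, i.e. that it is weak-* closed, which is the one genuinely non-formal point. I would argue this using the compactness machinery already developed: $C_n^\circ$ consists of linear forms $l$ with $\langle l,\varphi\rangle\geq 0$ whenever $\varphi\leq 0$ and $\varphi(e_i)=0$; writing such $l = \mu_f - \mu_g$ via Lemma \ref{lem:dual_Hn} and analyzing the constraint, one shows $l$ can be written as $\mu_e - \mu_h$ for a suitable $h$, i.e. $l\in\widetilde{P}(\Opfn,n)$. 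Concretely: $C_n^\circ$ is the set of $l\in H_n^*$ such that $l \geq 0$ on $\{-\varphi : \varphi\geq 0,\ \varphi(e_i)=0\}$; since the functions $\varphi$ with $\varphi(e_i)=0$ for all $i$ form a subspace of codimension $n$, the form $l$ restricted to that subspace must be $\leq 0$ when paired with nonnegative functions — so $l = \nu + \sum_i c_i\delta_{[e_i]}$ in an appropriate sense where $\nu\geq 0$ and $\sum c_i$ is constrained; matching this against the form $\mu_e - \mu_h$ and using that any positive measure on $\KP^{n-1}$ is $\mu_h$ for some $n$-tuple $h$ (Lemma \ref{lem:dual_Hn} again) gives the representation.

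\emph{The main obstacle} I expect is precisely the last step — verifying $\widetilde{P}(\Opfn,n) = C_n^\circ$ rather than merely $\subseteq$ — because it requires decomposing an arbitrary element of $C_n^\circ$ as $\mu_e - \mu_h$, which amounts to checking that the "point masses at the $e_i$" part of any admissible linear form has total weight exactly matching $\sum_i\varphi(e_i)=n$-worth of mass and that the remainder is a genuine positive measure representable as $\mu_h$. The cleanest route is probably: note $C_n^\circ$ is weak-* closed (polars always are), it contains $\widetilde{P}(\Opfn,n)$, and it equals the weak-* closure of the latter by the bipolar theorem; then show the reverse inclusion $C_n^\circ\subseteq\widetilde{P}(\Opfn,n)$ directly by the representation argument, which simultaneously yields weak-* closedness of $\widetilde{P}(\Opfn,n)$ as a byproduct.
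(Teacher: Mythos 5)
Your convexity argument and your computation of the polar $\prep{\widetilde{P}(\Opfn,n)} = C_n$ are correct and essentially identical to the paper's (the scaling trick for convexity, $T=0$ to get $\sum_i\varphi(e_i)\geq 0$, large-mass operators to force $\varphi\leq 0$, and the converse direction). Where you diverge is on the weak-* closedness, and here your route is genuinely different from the paper's. The paper invokes the Krein--Smulian theorem: it suffices to check that $\widetilde{P}(\Opfn,n)\cap B_{H_n^*}$ is (sequentially) weak-* closed, and the key observation making this painless is that an element of total variation $\leq 1$ in $\widetilde{P}(\Opfn,n)$ has Jordan positive part supported in $\{\K e_1,\dots,\K e_n\}$ with mass $\leq 1$, hence dominated by $\mu_e$, so the intersection is exactly $\{\mu_e-\nu : \nu\geq 0\}\cap B_{H_n^*}$, a manifestly weak-* closed set. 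You instead propose to prove the reverse inclusion $C_n^\circ\subseteq\widetilde{P}(\Opfn,n)$ directly, which by the bipolar theorem yields weak-* closedness as a byproduct. That route does work: from $\langle l,\varphi\rangle\geq 0$ for all $\varphi\in C_n$, a Urysohn argument against the Hahn decomposition of $l$ shows $l$ is $\leq 0$ off $\{\K e_1,\dots,\K e_n\}$, so $l = \sum_i c_i\delta_{\K e_i} - \nu$ with $\nu\geq 0$; choosing $s\geq\max_i c_i^+$ then gives $l = s\mu_e - \nu'$ with $\nu'\geq 0$, which is $s(\mu_e-\mu_{Te})$ for a suitable $T$. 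The paper's Krein--Smulian detour is arguably slicker because it sidesteps the measure-theoretic case analysis, but your direct route is also legitimate. Two small inaccuracies in your sketch to watch: you assert $l$ can be written as $\mu_e-\mu_h$, but in general you need $s\mu_e-\mu_h$ with $s>1$ (the atoms $c_i$ need not be $\leq 1$); and your tentative decomposition $l=\nu+\sum_i c_i\delta_{[e_i]}$ with $\nu\geq 0$ has the sign of $\nu$ reversed --- the correct statement is that $l$ minus its atoms at the $\K e_i$ is a negative measure.
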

\begin{proof}The convexity of the cone $\widetilde{P}(\Opfn,n)$ is clear, as $\theta (\mu_e - \mu_{Te}) + (1-\theta)(\mu_e - \mu_{Se})$ can be written as $\mu_e - \mu_{Re}$ for the linear map $R \colon \ell_p^n \to L_p \oplus L_p$ given in matrix form by $R = \begin{pmatrix} \theta^{\frac 1 p} T \\ (1-\theta)^{\frac 1 p} s \end{pmatrix}$. For the weak-* closedness, by the Krein-Smulian theorem \cite[Theorem V.12.1]{MR1070713}, we have to show that the intersection of $\widetilde{P}(\Opfn,n)$ with the closed unit ball $B_{H_n^*}$ is sequentially weak-* closed. Recall that every element of $H_n^*$ can be regarded as a signed measure on $\KP^{n-1}$. If it belongs to $\widetilde{P}(\Opfn,n) \cap B_{H_n^*}$, then its positive part in the Jordan decomposition has total mass $\leq 1$ and has support contained in $\{\K e_1,\dots, \K e_n\}$. In particular, it is less than $\mu_e$. It follows that it can be written as $\mu_e - \mu_{f}$ for some $f \in (L_p)^n$. So we are reduced to showing that $\{\mu_e - \mu_f \mid f \in L_p^n\}$ is weak-* closed in $H_n^*$, which is clear.

  It remains to identify the polar of $\widetilde{P}(\Opfn,n)$.
  If $\varphi \in C_n$, and $T \in \Opfn$, we have
  \[ \langle \mu_e ,\varphi\rangle = \sum_i \varphi(e_i) = 0,\]
  so 
  \[ \langle \mu_e - \mu_{Te},\varphi\rangle = -\langle  \mu_{Te},\varphi\rangle \geq 0.\]
  This shows the inclusion $C_n \subset \prep{\widetilde{P}(\Opfn,n)}$.

  For the converse inclusion, consider $\varphi \in \prep{\widetilde{P}(\Opfn,n)}$. Then for every $f \in (L_p)^n$, we have
  \begin{equation}\label{eq:polar_of_Ptilde} \langle \mu_e - \mu_f,\varphi\rangle \geq 0.
  \end{equation}
  In particular, replacing $f$ by $sf$ and making $s\to \infty$, we obtain 
  \[ -\langle \mu_f,\varphi\rangle \geq 0\]
  for every $f \in (L_p)^n$. Taking for $f$ a constant $z$, this forces $\varphi(z) \leq 0$ for every $z \in \K^n$. Taking $f=0$ in \eqref{eq:polar_of_Ptilde} leads to
  \[ \sum_i \varphi(e_i) = \langle \mu_e,\varphi\rangle \geq 0.\]
  This implies that $\varphi(e_i) = 0$ for every $i$, and that $\varphi$ belongs to $C_n$. This concludes the proof of the inclusion $\prep{\widetilde{P}(\Opfn,n)} \subset C_n$ and of the lemma.
\end{proof}
We can now reprove Hernandez' Theorem.
\begin{thm}\label{thm:hernandezbis}(\cite{MR703903}) Let $A \subset \Banach$ and $X \in \Banach$. The following are equivalent.

  \begin{enumerate}
  \item\label{item:hernandezbispolar} For every operator $T \colon L_p \to L_p$, $\polar{T}{X} \leq \sup_{Y \in A} \polar{T}{Y}$.
    \item\label{item:hernandezbisfinrep} $X$ is finitely representable in the class of all quotients of finite $\ell_p$-direct sums of elements in $A$.
  \end{enumerate}

\end{thm}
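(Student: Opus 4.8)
The plan is to run the $C(\KP^{n-1})$-encoding machinery exactly as for Theorem~\ref{thm:hernandez}, but with the smaller cones $\widetilde P(\cdot,n)\subset H_n^*$ tailored to $\Opf$, and then to match the resulting ``dual'' object with quotients of $\ell_p$-sums. We may assume $A$ contains a nonzero space (otherwise \eqref{item:hernandezbispolar} forces $X=\{0\}$ and both statements are trivial). Rescaling each $T$ by $\bigl(\sup_{Y\in A}\polar T Y\bigr)^{-1}$ shows that \eqref{item:hernandezbispolar} says precisely that $X$ lies in the bipolar of $A$ for the duality between $\Banach$ and $\Opf$. By Lemma~\ref{lem:finite_dim_restriction} this is the bipolar for the duality with $\Opff=\cup_n\Opfn$, so \eqref{item:hernandezbispolar} holds if and only if $\polar T X\le 1$ for every $n$ and every $T\in\Opfn\cap A^\circ$; by Lemma~\ref{lem:polarity_basic_bis} this is exactly the statement that $N(X,n)\subseteq\prep{\widetilde P(\Opfn\cap A^\circ,n)}$ for every $n$.

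The first task is to compute this polar. One has $\widetilde P(\Opfn\cap A^\circ,n)=\widetilde P(\Opfn,n)\cap N(A,n)^\circ$ (for $T\in\Opfn$, $T\in A^\circ$ iff $\mu_e-\mu_{Te}\in N(A,n)^\circ$, and both sides are cones), an intersection of two weak-* closed convex cones in $H_n^*$ thanks to Lemma~\ref{lem:polar_of_Ptilde} (the first factor) and to $N(A,n)^\circ$ being a polar of a cone. For any two weak-* closed convex cones $C_1,C_2\subseteq H_n^*$ the bipolar theorem in $H_n\cong C(\KP^{n-1})$ gives $\prep{(C_1\cap C_2)}=\overline{\mathrm{conv}}^{\|\cdot\|}(\prep{C_1}\cup\prep{C_2})$: the inclusion $\supseteq$ is formal, and for $\subseteq$ one checks that $D:=\overline{\mathrm{conv}}^{\|\cdot\|}(\prep{C_1}\cup\prep{C_2})$ satisfies $D^\circ=(\prep{C_1})^\circ\cap(\prep{C_2})^\circ=C_1\cap C_2$, whence $\prep{(C_1\cap C_2)}=\prep{(D^\circ)}=D$. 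Using $\prep{\widetilde P(\Opfn,n)}=C_n$ (Lemma~\ref{lem:polar_of_Ptilde}) and $\prep{(N(A,n)^\circ)}=\overline{\mathrm{conv}}\,N(A,n)=N(\overline{\oplus_{\ell_p}A},n)$ (bipolar theorem and Lemma~\ref{lem:understandN}), and since both of these are cones, we get $\prep{\widetilde P(\Opfn\cap A^\circ,n)}=\overline{C_n+N(\oplus_{\ell_p}A,n)}=:\mathcal S_n$. A short argument using the compactness of bounded subsets of $N(\Banach,n)$ (Lemma~\ref{lem:compactnessNAn}) together with adding small multiples of $z\mapsto|z|^p\in N(\oplus_{\ell_p}A,n)$ to correct boundary values identifies $\mathcal S_n$ with the set of $\psi\in H_n$ for which there exist a space $V$ finitely representable in $\oplus_{\ell_p}A$ and $v_1,\dots,v_n\in V$ with $\psi(z)\le\|\sum_i z_iv_i\|_V^p$ for all $z$ and $\psi(e_i)=\|v_i\|_V^p$ for all $i$.

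For the implication \eqref{item:hernandezbisfinrep}$\Rightarrow$\eqref{item:hernandezbispolar}: if $Z=Y/W$ is a quotient of $Y=X_1\oplus_p\dots\oplus_p X_k$ with $X_j\in A$, then $\polar T Z\le\polar T Y=\max_j\polar T{X_j}\le\sup_{Y'\in A}\polar T{Y'}$ for every $T\colon L_p\to L_p$, because quotient maps and $\ell_p$-direct sums commute with $L_p(\cdot)$. Since $\polar T X$ is the supremum of $\polar T E$ over finite-dimensional $E\subseteq X$, and each such $E$ is, for every $\varepsilon>0$, $(1+\varepsilon)$-isomorphic to a subspace of such a $Z$, letting $\varepsilon\to0$ gives \eqref{item:hernandezbispolar}.

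For the converse \eqref{item:hernandezbispolar}$\Rightarrow$\eqref{item:hernandezbisfinrep}, the main obstacle, we use that $N(X,n)\subseteq\mathcal S_n$ for all $n$. Fix a finite-dimensional $E\subseteq X$ and $\delta>0$, and let $\zeta_1,\dots,\zeta_m$ be a $\delta$-net of the unit ball of $E$. Applying the description of $\mathcal S_m$ to $\psi(z)=\|\sum_j z_j\zeta_j\|_E^p\in N(X,m)$ yields $V$ finitely representable in $\oplus_{\ell_p}A$ and $v_1,\dots,v_m\in V$ with $\|\sum_j z_j\zeta_j\|_E\le\|\sum_j z_jv_j\|_V$ and $\|v_j\|_V=\|\zeta_j\|_E$. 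Thus $T\colon\vect\{v_j\}\to E$, $v_j\mapsto\zeta_j$, is a well-defined surjective contraction, and each net point has a preimage of the same norm; writing an arbitrary $\zeta\in E$ as a geometrically convergent combination $\sum_k c_k\zeta_{j_k}$ with $\sum_k|c_k|\le(1+O(\delta))\|\zeta\|_E$ (iterating the net estimate) and lifting termwise shows that $T$ is a $(1+O(\delta))$-metric surjection. Hence $E$ is $(1+O(\delta))$-isomorphic to a quotient $V_0/W_0$ of the finite-dimensional space $V_0=\vect\{v_j\}$, which is again finitely representable in $\oplus_{\ell_p}A$; composing a $(1+\delta)$-embedding $j\colon V_0\hookrightarrow Y_0$ into a finite $\ell_p$-sum $Y_0$ of spaces in $A$ with the quotient map, $V_0/W_0$ embeds $(1+\delta)^2$-isomorphically into $Y_0/j(W_0)$ (the quotient norm of $V_0/W_0$ is the infimum over $W_0$, which up to the distortion of $j$ is the infimum computing the quotient norm of $Y_0/j(W_0)$), a quotient of a finite $\ell_p$-direct sum of spaces in $A$. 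Letting $\delta\to0$ shows every finite-dimensional subspace of $X$ is finitely representable in the class of quotients of finite $\ell_p$-direct sums of spaces in $A$, which is \eqref{item:hernandezbisfinrep}. I expect the delicate point to be precisely this upgrade from ``norm-nonincreasing map attaining the norm at prescribed generators'' (all that $\mathcal S_m$ gives) to ``metric surjection'': it is made possible by the equality $\psi(e_i)=\varphi(e_i)$, i.e.\ the vanishing at the $e_i$ of the $C_n$-component, combined with the $\delta$-net and the geometric-series lifting; the remaining steps are routine bookkeeping with finite representability and the standard fact that finite-dimensional subspaces of a quotient $Y/W$ are, up to $1+\varepsilon$, quotients of finite-dimensional subspaces of $Y$.
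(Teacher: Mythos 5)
Your proposal follows essentially the same route as the paper's own proof: reduce via Lemma~\ref{lem:finite_dim_restriction} to the $\Opfn$ encoding, identify $\widetilde P(\Opfn\cap A^\circ,n)=\widetilde P(\Opfn,n)\cap N(A,n)^\circ$, compute the polar of this intersection using Lemma~\ref{lem:polar_of_Ptilde} and the bipolar theorem to arrive at $C_n + \overline{\mathrm{conv}}\,N(A,n)$ (the paper phrases this as the closed convex hull of $C_n\cup N(A,n)$; your general lemma $\prep{(C_1\cap C_2)}=\overline{\mathrm{conv}}(\prep{C_1}\cup\prep{C_2})$ is the same identity spelled out, and the closedness of the sum is justified by the same compactness argument), and then convert the resulting domination-with-prescribed-boundary-values into a metric surjection. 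Your geometric-series lifting from a $\delta$-net is equivalent to the paper's observation that the convex hull of a unit-sphere net contains a slightly smaller ball; and your explicit embedding of $V_0/W_0$ into $Y_0/j(W_0)$ is the content of the paper's remark that ``a subspace of a quotient is the same as a quotient of a subspace.'' For the easy direction your commuting-diagram argument for $\polar{T}{Y/W}\le\polar{T}{Y}$ is a valid alternative to the paper's duality argument, but note it relies crucially on $T$ being defined on a full $L_p$ space (so that $L_p(\Omega;Y)\to L_p(\Omega;Z)$ is a genuine metric surjection on the whole domain of $T_Y$); that restriction is exactly the point of this variant of the duality, and is worth flagging explicitly.
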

  \begin{proof} The interesting direction is (\ref{item:hernandezbispolar})$\implies$(\ref{item:hernandezbisfinrep}). So assume that (\ref{item:hernandezbispolar}) holds. Equivalently by Lemma~\ref{lem:finite_dim_restriction} $X$ belongs to the bipolar of $A$ for the duality between $\Banach$ and $\Opff$. By Lemma~\ref{lem:polarity_basic_bis}, this holds if and only if for every $n$,
    \[ N(X,n) \subset \prep{(\widetilde{P}(\Opfn,n) \cap N(Y,n)^\circ)}.\]
    By Lemma~\ref{lem:polar_of_Ptilde} and the bipolar theorem, $\widetilde{P}(\Opfn,n)$ coincides with $C_n^\circ$, so the previous inclusion becomes
    \[ N(X,n) \subset \prep{((C_n \cup N(Y,n))^\circ)}.\]
    By the bipolar theorem again, we obtain that $N(X,n)$ belongs to the closed convex hull of $ C_n \cup N(Y,n)$, which is nothing but $C_n + \overline{\mathrm{conv}} (N(Y,n))$ (use compactness as in Lemma~\ref{lem:compactnessNAn} to see that $C_n + \overline{\mathrm{conv}} (N(Y,n))$ is closed). By Lemma~\ref{lem:understandN}, we obtain that for every $n \in \N$ and every $x_1,\dots,x_n \in X$, there is a space $Y$ finitely representable in the finite $\ell_p$-direct sums of elements in $A$ and elements $y_1,\dots,y_n$ spanning $Y$ such that
    \[ \forall i, \|x_i\| = \|y_i\| \textrm{ and }\forall z \in \K^n,\|\sum_i z_i x_i\|_X \leq \| \sum_i z_i y_i\|_Y.\]
    Now if $E$ is any finite dimensional subspace of $X$, and $\varepsilon>0$, we can pick a finite family $x_1,\dots,x_n$ in its unit sphere whose convex hull contains the ball of radius $(1+\varepsilon)^{-1}$. Applying the preceding to these $x_i$'s, we obtain  a space $Y$ finitely representable in $\oplus_{\ell_p} A$ and a linear map $u \colon Y \to E$ of norm $1$ such that the image of the unit ball contains the ball of radius $(1+\varepsilon)^{-1}$ of $E$. In other words, $E$ is at Banach-Mazur distance $\leq 1+\varepsilon$ from a quotient of $Y$. But a subspace of a quotient is the same as a quotient of a subspace, so we have obtained that $X$ is finitely representable in the quotients of spaces in $\oplus_{\ell_p} A$. This is (\ref{item:hernandezbisfinrep}).

The converse implication (\ref{item:hernandezbisfinrep})$\implies$(\ref{item:hernandezbispolar}) can be proved using the same arguments, but it is easy and classical to check it directly. The point that perhaps deserves a small justification is why $\polar{T}{X} \leq \polar{T}{Y}$ if $X$ is a quotient of $Y$ and $T \colon L_p \to L_p$ is an operator. One argument is by duality. Indeed, $X^*$ identifies then as a subspace of $Y^*$, and if $T^* \colon L_q \to L_q$ denotes the dual of $T$ (for $\frac 1 q + \frac 1 p=1$), then
    \[\polar{T}{X} = \polar{T^*}{X^*} \leq \polar{T^*}{Y^*} = \polar{T}{Y}.\]
    \end{proof}
\appendix
\section{On the $\GL(n,\K)$ invariant subspaces of the space of homogeneous functions}

Let $0<p<\infty$. We recall some definition that already appeared in the body of the paper for $p \geq 1$.

Let $n$ be a positive integer. Denote by $|z|$ the $\ell_p$-''norm'' on $\K^n$
\[ |z| = \left( |z_1|^p+ \dots + |z_n|^p \right)^{\frac 1 p}.\]

A function $\varphi \colon \K^{n} \to \R$ is called homogeneous of degree $p$ if $\varphi(\lambda z) = |\lambda|^p \varphi(z)$ for all $z \in \K^n$ and $\lambda \in \K$. The space $H_n$ of real continuous homogeneous of degree $p$ functions on $\K^n$ is a Banach space for the topology of uniform convergence on compact subsets on $\K^n$. A particular choice of norm is $\|\varphi\|= \sup_{|z| \leq 1} |\varphi(z)|$, so that for this norm $H_n$ is isometrically isomorphic to the space of continuous functions on $\KP^{n-1}$ through the identification of $\varphi \in H_n$ with the function $\K z \in \KP^{n-1} \mapsto \varphi\left(\frac{z}{|z|}\right)$. For this identification, the natural action of $\GL_n(\K)$ on $H_n$ corresponds to the action of $\GL_n(\K)$ on $C(\KP^{n-1})$ given by
\[ A \cdot \varphi(\K z) = \frac{|A^{-1}z|^p}{|z|^p}\varphi(A^{-1} \K z).\]

\begin{thm}\label{thm:Gln_invariant_subspaces} The $\GL_n(\K)$-invariant closed subspaces of the Banach space $H_n$ of continuous $p$-homogeneous functions $\K^n \to \R$ are 
\begin{itemize}
\item $\{0\}$ and $H_n$ if $p$ is not an even integer.
\item $\{0\}$, $H_n$ and the subspace of degree $p$ homogeneous polynomials if $p$ is an even integer.
\end{itemize}
\end{thm}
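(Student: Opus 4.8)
The plan is to realize $H_n$ concretely as $C(\KP^{n-1})$ via restriction to the \emph{Euclidean} unit sphere, so that the maximal compact subgroup $K$ of $\GL_n(\K)$ (that is $K=\mathrm{O}(n)$ when $\K=\R$, $K=\U(n)$ when $\K=\C$) acts by the untwisted action, and then to study the closed $\GL_n(\K)$-invariant subspaces through the $K$-isotypic decomposition together with the infinitesimal action of the complementary space $\mathfrak p$ of symmetric (resp. Hermitian) matrices. Under $K$ the space $H_n$ decomposes, with dense sum, as $\overline{\bigoplus_{k\ge 0}\mathcal H_k}$ into pairwise non-isomorphic irreducibles, indexed by increasing degree: $\mathcal H_k$ is the space of degree-$2k$ even spherical harmonics when $\K=\R$, and of bidegree-$(k,k)$ harmonics when $\K=\C$. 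Since the $K$-isotypic projections are continuous (they are convolution by characters, i.e.\ integral operators on $C(\KP^{n-1})$ with continuous kernels), every closed $\GL_n(\K)$-invariant subspace $V$ is of the form $V=\overline{\bigoplus_{k\in S}\mathcal H_k}$ for some set $S$ of indices, and it remains to determine which $S$ can occur.

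The first step would be to show that a nonzero such $V$ always contains the constants $\mathcal H_0$. Fix $0\ne\varphi\in V$; after acting by $\GL_n(\K)$ and rescaling we may assume $\varphi(e_1)=1$, and then $\operatorname{diag}(1,t,\dots,t)\cdot\varphi$ converges uniformly on compact subsets of $\K^n$, as $t\to\infty$, to $z\mapsto\varphi(z_1,0,\dots,0)$, which is the function $z\mapsto|z_1|^p$ (modulus of the scalar $z_1$). Hence $z\mapsto|z_1|^p\in V$, and by $\GL_n(\K)$-invariance $V$ contains $z\mapsto|\ell(z)|^p$ for every nonzero linear form $\ell$ on $\K^n$; averaging these over $\ell$ in the Euclidean unit sphere of $(\K^n)^*$ (a uniform-on-compacta limit of convex combinations, hence in $V$) yields a $K$-invariant function homogeneous of degree $p$, which is a positive multiple of $z\mapsto|z|_2^p$. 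Thus $\mathcal H_0\subset V$.

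The core is the infinitesimal action of $X\in\mathfrak p$ on an element $|z|_2^{p-2k}Y(z)$ of $\mathcal H_k$, with $Y$ a harmonic polynomial of degree $2k$ (resp.\ bidegree $(k,k)$). Differentiating $\varphi(e^{-tX}z)$ at $t=0$ gives $X\cdot\bigl(|z|_2^{p-2k}Y\bigr)=-(p-2k)\,|z|_2^{p-2k-2}\langle Xz,z\rangle\,Y-|z|_2^{p-2k}\sum_{i,j}X_{ij}z_j\,\partial_iY$, which on the sphere has $K$-isotypic components only in $\mathcal H_{k-1}\oplus\mathcal H_k\oplus\mathcal H_{k+1}$, and whose component in $\mathcal H_{k+1}$ equals $-(p-2k)$ times the harmonic projection of $\langle Xz,z\rangle\,Y$; the bilinear map $(X,Y)\mapsto$ that projection is nonzero, since it realizes the Cartan component of $\operatorname{Sym}^2\otimes\mathcal H_k$. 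Hence the raising map $\mathcal H_k\to\mathcal H_{k+1}$ vanishes if and only if $p=2k$. For the lowering map $\mathcal H_k\to\mathcal H_{k-1}$ ($k\ge1$): if it vanished for some $k$, then $\bigoplus_{j\ge k}\mathcal H_j$ would be a $\mathfrak p$-stable $K$-finite subspace, so by the standard fact that the closure of a $(\mathfrak{gl}_n,K)$-submodule is $\GL_n(\K)$-invariant, its closure would be a nonzero closed invariant subspace not containing $\mathcal H_0$, contradicting the previous step; hence all lowering maps are nonzero.

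Assembling this: as $V$ is $\mathfrak p$-stable, $k\in S$ forces $k-1\in S$ whenever $k\ge1$ (lowering), and forces $k+1\in S$ unless $p=2k$ (raising). If $p$ is not an even integer there is never an obstruction, so $S$ is $\varnothing$ or all of $\Z_{\ge0}$, i.e.\ $V\in\{0,H_n\}$. If $p=2m$ the only possibilities are $S=\varnothing$, $S=\{0,\dots,m\}$, or $S=\Z_{\ge0}$; the middle one gives $V=\bigoplus_{k\le m}\mathcal H_k$, which is exactly the (finite-dimensional, hence closed, and visibly $\GL_n(\K)$-invariant) space of degree-$p$ homogeneous polynomials — a genuinely new invariant subspace, e.g.\ it does not contain $z\mapsto|z_1|^{p-1}|z_2|$. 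I expect the main difficulty to be in the third paragraph: justifying that the (only piecewise-smooth) elements of $H_n$ may be differentiated along $\mathfrak p$ with convergence in the topology of $H_n$ (which is clean once one works on the Euclidean sphere, where $\mathcal H_k$ consists of restrictions of genuinely smooth functions) and pinning down the nonvanishing of the Cartan-component multiplication map; the $K$-decomposition and continuity of the isotypic projectors used in the second step are routine but must be stated correctly.
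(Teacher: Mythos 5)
Your proof is essentially correct, but it follows a genuinely different route from the paper's. The paper's proof is Fourier-analytic: it passes to a linear functional on $H_n$ vanishing on the $\GL_n$-orbit of $f_0$, mollifies it, rewrites the vanishing in an affine chart as a convolution equation $(g\circ A')\ast f=0$ on $\R^d$ (with $f(z)=f_0(1,z)$), and then invokes the Wiener Tauberian variant Proposition~\ref{prop:wiener} and transitivity of $\GL_{n-1}$ on $\R^d\setminus\{0\}$ to force $\operatorname{supp}\hat f\subset\{0\}$, hence $f$ polynomial, hence (Lemma~\ref{lem:p_even_integer}) $p$ even and $f_0$ a polynomial. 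Your proof is representation-theoretic: it exploits that $H_n\cong C(\KP^{n-1})$ is a multiplicity-free degenerate principal series, identifies the $K$-types with spherical harmonics, proves $\mathcal H_0\subset V$ by a direct degeneration argument, and classifies the invariant subspaces by computing the raising coefficient $-(p-2k)$ and ruling out vanishing of lowering via the $(\mathfrak g,K)$-module correspondence. Both proofs are correct; the paper's is more self-contained (the Wiener-type input is proved in the appendix) and stays close to the objects of the main text (measures on $\KP^{n-1}$), while yours is more structural and makes the role of $p=2k$ transparent as the vanishing of a single scalar. Two places in your sketch deserve care when written up: (a) the passage from the $\mathfrak p$-stable $K$-finite subspace to a closed $\GL_n(\K)$-invariant subspace uses Harish-Chandra's correspondence and requires admissibility — here guaranteed by multiplicity-freeness, but it should be said; and (b) the selection rule you invoke (that $X\cdot\mathcal H_k$ meets only $\mathcal H_{k\pm1}\oplus\mathcal H_k$) and the nonvanishing of the Cartan component should be stated for both $\K=\R$ (degree-$2k$ harmonics, $K=\mathrm{O}(n)$) and $\K=\C$ (bidegree-$(k,k)$ harmonics, $K=\U(n)$), as the weight combinatorics are slightly different; both do work.
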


\begin{rem}\label{rem:spatial_and_regular_isometries}
This theorem allows to reprove the result \cite{MR611233} that if $p$ is not an even integer, then every isometry between subspaces of $L_p$ spaces is a spatial isometry. Indeed, if $T$ is such an isometry, $n$ is an integer, $f \in D(T)^n$  and $\varphi (z) = |z_1|^p$, then we get for every $A\in \GL_n(\K)$  (with the notation of \eqref{eq:def_muf})
\[\dual{\mu_f-\mu_{Tf}}{\varphi \circ A} = \|\sum_j a_{1,j} f_j\|^p -  \|\sum_j a_{1,j} Tf_j\|^p =0.\]
The linear form $\mu_f - \mu_{Tf}$ therefore vanishes on the $\GL_n(\K)$-invariant subspace spanned by $\{\varphi \circ A,A \in \GL_n(\K)\}$. By Theorem \ref{thm:Gln_invariant_subspaces} this subspace is dense, which implies that $\mu_f-\mu_{Tf}=0$. One concludes by Remark \ref{rem:when_muf=mug} that $T$ is a spatial isometry.

When $p$ is an even integer, the same argument shows that if $X$ is a Banach space and $x,y \in X$ are so that $(z_1,z_2) \mapsto \|z_1 x+z_2 y\|^p$ is not a polynomial in $z_1,z_2,\overline z_1,\overline z_2$ (for example if $X = \K^2$ with the $\ell_q$ norm for $q$ which is not an even divisor of $p$), then every operator $T$ between subspaces of $L_p$ spaces such that $\|T_X\|=\|T^{-1}_X\|=1$ is a spatial isometry. In particular we have:\end{rem}
\begin{cor}\label{cor:spatial_and_regular_isometries} For any $0<p<\infty$ (even integer or not) a linear map $T$ between subspaces of $L_p$ spaces is a spatial isometry if and only if $T$ is a regular isometry.
\end{cor}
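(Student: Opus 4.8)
The corollary is an equivalence, so I would prove the two implications separately.

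\emph{Spatial isometry $\Rightarrow$ regular isometry.} This is the soft direction, and the plan is to read it off Lemma~\ref{lem:when_muf=mug}. If $T$ is a spatial isometry, then by the easy half of that lemma one has $\mu_f=\mu_{Tf}$ for every $n$ and every $f\in\dom(T)^n$. Pairing both linear forms with $\varphi(z)=\|\sum_i z_ix_i\|_X^p$, for an arbitrary separable Banach space $X$ and $x_1,\dots,x_n\in X$, yields $\|\sum_i f_i x_i\|_{L_p(X)}=\|\sum_i (Tf_i)x_i\|_{L_p(X)}$, i.e. $T\otimes\mathrm{id}_X$ is an isometry onto its range. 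Provided $\dom(T)\neq\{0\}$ (the opposite case being vacuous) this gives $\polar{T}{X}=\polar{T^{-1}}{X}=1$ for every $X$ — note the isometry property of $T\otimes\mathrm{id}_X$ onto its range already forces the second equality, so one need not check separately that $T^{-1}$ is spatial — and hence $T$ is a regular isometry.

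\emph{Regular isometry $\Rightarrow$ spatial isometry.} Here the plan is to run the argument outlined in Remark~\ref{rem:spatial_and_regular_isometries}, which is driven by Theorem~\ref{thm:Gln_invariant_subspaces}. Fix $n$ and $f\in\dom(T)^n$; by Remark~\ref{rem:when_muf=mug} it is enough to prove $\mu_f-\mu_{Tf}=0$ in $H_n^*$. The idea is to exhibit a single $\varphi\in H_n$ that is \emph{not} a degree-$p$ homogeneous polynomial such that $\langle\mu_f-\mu_{Tf},\varphi\circ A\rangle=0$ for all $A\in\GL_n(\K)$: by Theorem~\ref{thm:Gln_invariant_subspaces} the closed span of the $\GL_n(\K)$-orbit of such a $\varphi$ is then all of $H_n$, forcing $\mu_f-\mu_{Tf}=0$. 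For $p$ not an even integer I would take $\varphi(z)=|z_1|^p$ (which is not a polynomial), the vanishing on $\varphi\circ A$ being exactly the identity $\|\sum_j a_{1,j}f_j\|_p^p=\|\sum_j a_{1,j}Tf_j\|_p^p$, valid because a regular isometry is in particular a linear isometry ($\polar{T}{\K}=\polar{T^{-1}}{\K}=1$). For $p$ an even integer, $|z_1|^p$ is a polynomial, so instead I would fix a finite-dimensional Banach space $X$ and $x,y\in X$ for which $(z_1,z_2)\mapsto\|z_1x+z_2y\|_X^p$ is not a polynomial in $z_1,z_2,\bar z_1,\bar z_2$ — e.g. $X=\ell_q^2$ with $q$ not an even divisor of $p$ — and take $\varphi(z)=\|z_1x+z_2y\|_X^p\in H_n$ when $n\geq2$ (the case $n=1$ being handled directly by $|z|^p$ since $T$ is an isometry); now $\langle\mu_f-\mu_{Tf},\varphi\circ A\rangle$ equals $\|g_1x+g_2y\|_{L_p(X)}^p-\|Tg_1x+Tg_2y\|_{L_p(X)}^p$ with $g_k=\sum_j a_{k,j}f_j\in\dom(T)$, which vanishes because $\polar{T}{X}=\polar{T^{-1}}{X}=1$ makes $T\otimes\mathrm{id}_X$ isometric. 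In both cases $\mu_f-\mu_{Tf}=0$, and Remark~\ref{rem:when_muf=mug} concludes.

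The genuinely difficult input is the density of these $\GL_n(\K)$-orbits, which is precisely Theorem~\ref{thm:Gln_invariant_subspaces} from the appendix; once that is granted, what remains is essentially a repackaging of Remark~\ref{rem:spatial_and_regular_isometries}. The only points that would still need a line of care are: verifying that for $p$ an even integer a test space $X$ with $\|z_1x+z_2y\|_X^p$ non-polynomial genuinely exists (the $\ell_q^2$ example, checked by noting a non-smooth, or non-terminating, term appears near a coordinate axis), and disposing of the degenerate cases $\dom(T)=\{0\}$ and $n=1$.
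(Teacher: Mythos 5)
Your proposal is correct and takes essentially the same route as the paper, which presents the substance of the proof in Remark~\ref{rem:spatial_and_regular_isometries} and then states the corollary as an immediate consequence; you fill in the easy direction (spatial $\Rightarrow$ regular via the soft half of Lemma~\ref{lem:when_muf=mug}) and the small cases $n=1$, $\dom(T)=\{0\}$, which the paper leaves implicit, but the test functions ($|z_1|^p$ for $p$ not an even integer, $\|z_1x+z_2y\|_{\ell_q^2}^p$ with $q$ not an even divisor of $p$ otherwise), the appeal to Theorem~\ref{thm:Gln_invariant_subspaces} for density of the $\GL_n(\K)$-orbit, and the conclusion via Remark~\ref{rem:when_muf=mug} are exactly the paper's.
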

Rudin's proof in \cite{MR0410355} relied on the Wiener Tauberian theorem. In the proof of Theorem \ref{thm:Gln_invariant_subspaces}, we shall need the following variant. 
\begin{prop}\label{prop:wiener} Let $f,g \colon \R^d \to \C$ be two measurable functions and $C>0$ such that $|f(x)| \leq C (1+|x|)^p$ and $|g(x)| \leq C (1+|x|)^{-p-d-1})$ for all $x \in \R^d$. Assume that $g \ast f = 0$. Then the support of the tempered distribution $\hat f$ is contained in $\{\xi \in \R^d,\hat g(\xi) = 0\}$.
\end{prop}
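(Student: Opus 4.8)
The plan is to prove that $\hat f$ vanishes on a neighbourhood of every point at which $\hat g$ does not vanish; this is exactly the statement $\operatorname{supp}\hat f\subset\{\hat g=0\}$, and I would obtain it by a Wiener--Tauberian argument carried out in the polynomially weighted convolution algebra $L^1_p:=L^1\!\big(\R^d,(1+|x|)^p\,dx\big)$ (this is where Rudin invoked the classical Wiener theorem). First I would record what the two exponents buy us. Since $(1+|x|)^p|g(x)|\le C(1+|x|)^{-d-1}\in L^1$, we have $g\in L^1_p$, so $\hat g$ is continuous and lies in $\widehat{L^1_p}$; and since the weight $W(x)=(1+|x|)^p$ is submultiplicative, $L^1_p$ is a Banach algebra under convolution. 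The growth bound makes $f$ a tempered distribution, and $|f|*|g|$ is a function of polynomial growth of degree $p$ because $(|f|*|g|)(x)\le C(1+|x|)^p\!\int(1+|y|)^p|g(y)|\,dy=C'(1+|x|)^p$; in particular $g*f=f*g$ is a genuine convolution and by hypothesis it is $0$.

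Now fix $\xi_0$ with $\hat g(\xi_0)\neq 0$; it suffices to find a neighbourhood of $\xi_0$ on which $\hat f=0$. By continuity $\hat g$ is bounded away from $0$ on a compact neighbourhood $K$ of $\xi_0$. The key input is the $1/f$ theorem for the Beurling algebra $L^1_p$: polynomial weights satisfy the Beurling--Domar non-quasianalyticity condition, so $\widehat{L^1_p}$ is a regular Banach algebra with maximal ideal space $\R^d$, and hence there is $w\in\widehat{L^1_p}$ with $w=1/\hat g$ on $K$. Choose $\psi\in\mathcal S$ with $\hat\psi\in C_c^\infty(\operatorname{int}K)$ and $\hat\psi(\xi_0)\neq 0$; then $\hat\psi\in\widehat{L^1_p}$, so $\hat\psi\,w=\hat\eta$ for some $\eta\in L^1_p$, and comparing Fourier transforms (using $w\hat g=1$ on $K$ and $\hat\psi=0$ off $K$) gives $g*\eta=\psi$.

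Because $\eta\in L^1_p$ and $|f|*|g|$ has polynomial growth of degree $p$, the integral $\iint|f(x-y)g(y-z)\eta(z)|\,dy\,dz$ is finite for every $x$, so Fubini applies and
\[ f*\psi \;=\; f*(g*\eta) \;=\; \int\eta(z)\,(f*g)(\,\cdot-z)\,dz \;=\; 0 .\]
Since $\psi\in\mathcal S$ we may take Fourier transforms: $\hat f\cdot\hat\psi=\widehat{f*\psi}=0$ in $\mathcal S'$. As $\hat\psi\in C_c^\infty$ is non-zero on a neighbourhood $V$ of $\xi_0$, dividing by $\hat\psi$ on $V$ gives $\hat f|_V=0$, so $\xi_0\notin\operatorname{supp}\hat f$. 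Since $\xi_0$ was an arbitrary point outside $\{\hat g=0\}$, this yields $\operatorname{supp}\hat f\subset\{\hat g=0\}$.

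I expect the only genuine obstacle to be the Wiener-type input: one needs the division (local invertibility) theorem not in the classical Fourier algebra $A(\R^d)=\widehat{L^1}$ but in the weighted algebra $\widehat{L^1_p}$, and it is precisely the exponent $-p-d-1$ that legitimises this — it is exactly what places $g$ in $L^1_p$, which simultaneously puts $\hat g$ in the algebra where the $1/f$ theorem applies and forces $\eta$ to inherit enough decay for the Fubini step that transports $f*g=0$ to $f*\psi=0$. The rest is soft bookkeeping of growth and decay rates; alternatively, one could cite the relevant Tauberian theorem for Beurling algebras directly instead of re-deriving the $1/f$ statement.
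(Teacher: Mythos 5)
Your argument is correct, and it takes a genuinely different route from the paper's. You invoke the Wiener--Tauberian ($1/f$) theorem directly in the Beurling algebra $L^1_p=L^1(\R^d,(1+|x|)^p\,dx)$, noting that the polynomial weight is submultiplicative and satisfies the Beurling--Domar non\nobreakdash-quasianalyticity condition, so that $\widehat{L^1_p}$ is a regular Banach algebra in which local inverses exist; once you have $w\in\widehat{L^1_p}$ with $w=1/\hat g$ near $\xi_0$, the factorization $g*\eta=\psi$ with $\eta\in L^1_p$ and the Fubini estimate (which genuinely uses $\eta\in L^1_p$, not merely $L^1$, to pair against $f$ of growth $(1+|x|)^p$) do the rest. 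The paper avoids citing the Beurling-algebra machinery: it truncates $g_n(x)=g(x)\rho(x/n)$, exploits the hypothesis $g*f=0$ to show $g_n*f=(g_n-g)*f$ is $O\bigl(\tfrac1n(1+|x|/n)^p\bigr)$, and then proves two quantitative lemmas in the ordinary Fourier algebra $A(\R^d)$ --- a uniform bound on $\|\varphi/\hat g_n\|_{A(\R^d)}$ (the classical Wiener theorem plus a Neumann-series perturbation) and a uniform bound on $\|D^\alpha\hat g_n\|_{A(\R^d)}$ for $|\alpha|<p+1$ --- which, via Leibniz, yield the needed polynomial moment bound on $\cF^{-1}(\varphi/\hat g_n)$. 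In effect the paper re-derives by hand, inside $A(\R^d)$, exactly the piece of Beurling-algebra theory you quote as a black box; what your version buys is brevity and conceptual clarity, what the paper's buys is self-containment modulo the classical Wiener theorem. One small point to make explicit if you wrote this up: the justification that $\widehat{g*\eta}=\hat\psi$ pointwise implies $g*\eta=\psi$ uses that both sides are $L^1$ functions with continuous Fourier transforms, which holds since $g,\eta\in L^1$ and $\psi\in\mathcal S$.
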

\begin{proof} First observe that the assumption on $g$ implies that $g \in L_1(\R^d)$.
  
If $g$ belongs to $\cD(\R^d)$ (the space of compactly supported $C^\infty$ functions), then the proposition is easy~: by taking Fourier transform we have $\hat g \hat f=0$ (multiplication of a distribution by a $C^\infty$ function), from which the conclusion follows. The strategy will be to approximate $g$ by compactly supported $C^\infty$ functions.

We have to prove that for every $\xi \in \R^d$ with $\hat g(\xi) \neq 0$, there is a neighbourhood $V$ of $\xi$ such that $\dual{\hat f}{\varphi} =0$ for every $\varphi \in \cD(V)$. By standard translation/convolution/dilation arguments, we can assume that $\xi=0$, $g$ is $C^\infty$, and that $\hat g$ does not vanish on the closure of $B(0,1)$. We will prove that $\dual{\hat f}{\varphi} =0$ for every $\varphi \in \cD(B(0,1))$.

Let $\rho \colon \R^d \to [0,1]$ be a compactly supported $C^\infty$ function, equal to $1$ on $B(0,1)$, and define a sequence of functions $g_n \in \cD(\R^d)$ by $g_n(x) = g(x) \rho(\frac x n)$. By the dominated convergence theorem, $\|g_n - g\|_{L_1(\R^d)} \to 0$, and so $\|\hat g_n - \hat g\|_{L_\infty} \to 0$. In particular there exists $n_0$ such that  $\hat g_n$ does not vanish on $B(0,1)$ for all $n \geq n_0$.

Let $\varphi \in \cD(B(0,1))$. Then $\frac{\varphi}{\hat g_n}$ belongs to $\cD(B(0,1))$, so we can write
\[ \dual{\hat f}{\varphi} = \dual{\hat g_n \hat f}{\frac{\varphi}{\hat g_n}} = \dual{g_n \ast f}{\cF^{-1}(\frac{\varphi}{\hat g_n})}\]
where $\cF^{-1}$ is the inverse Fourier transform. Using that $g_n \ast f(x) = (g_n-g)\ast f (x) =O(\frac 1 n (1+ \frac{|x|}{n})^p)$ (this inequality will be explained below), we get
\begin{equation}\label{eq:Ff_small} |\dual{\hat f}{\varphi}| \leq \frac{C}{n} \int (1+ \frac{|x|}{n})^p |\cF^{-1}(\frac{\varphi}{\hat g_n})| dx.\end{equation}
To justify to domination of $(g_n-g)\ast f (x) = \int (g_n-g)(y) f(x-y) dy$, use that \[|(g_n-g)(y)| \lesssim (1+|y|)^{-p-d-1} 1_{|y|>n}\] and \[|f(x-y)| \lesssim (1+|x-y|)^p \lesssim (1+\max(|x|,|y|))^p\] to obtain
\[ |f \ast (g-g_n)(x)| \lesssim \int_{|y|>n} (1+|y|)^{-p-d-1} (1+\max(|x|,|y|))^p dy.\]
If $|x| \leq n$, then the preceding inequality becomes
\[ |f \ast (g-g_n)(x)| \lesssim \int_{|y|>n} (1+|y|)^{-d-1} dy \lesssim \frac 1 n.\]
If $|x|\geq n$, then we cut the integral as $\int_{n <|y| \leq |x|} + \int_{|x| <|y|}$ and get
\begin{eqnarray*} |f \ast (g-g_n)(x)| &\lesssim& \int_{n <|y| \leq |x|} \frac{(1+|x|)^p}{(1+|y|)^{p+d+1}} dy +  \int_{|y|>|x|} (1+|y|)^{-d-1} dy\\
& \lesssim& |x|^p \frac{1}{n^{p+1}} + \frac{1}{|x|}   \lesssim \frac{|x|^p}{n^{p+1}}.\end{eqnarray*}
This proves the announced inequality.

In view of \eqref{eq:Ff_small}, we see that our goal is to prove good integrability properties on the function $\cF^{-1}(\frac{\varphi}{\hat g_n})$, \emph{i.e.} good regularity properties of its Fourier transform $\frac{\varphi}{\hat g_n}$. To achieve this, we denote by $A(\R^d)$ the Fourier algebra of $\R^d$, \emph{i.e.} the Banach space $\cF (L_1(\R^d))$ for the norm $\|h\|_{A(\R^d)} = \| \cF^{-1} h\|_{L_1(\R^d)}$. The inequality
\begin{equation}\label{eq:A_fourier_algebra} \| h_1 h_2\|_{A(\R^d)} \leq \| h_1\|_{A(\R^d)} \| h_2\|_{A(\R^d)}\end{equation}
is the reason for the term ``algebra'' and is clear from the usual properties of convolution and Fourier transform. We have the following lemmas.
\begin{lem}\label{lem:wiener_quantitative} For every $\varphi \in \cD(B(0,1))$, there is a constant $C=C(\varphi)$ such that $\frac{\varphi}{\hat g_n}$ belongs to $A(\R^d)$ with norm $\leq C$ for all $n \geq n_0$.
\end{lem}
\begin{lem}\label{lem:derivative_A(Rd)} There is a constant $C'$ such that $D^\alpha \hat g_n$ belongs to $A(\R^d)$ with norm $\leq C'$ for all $n \in \N$ and $\alpha \in \N^d$, $|\alpha|< p+1$.
\end{lem}

These two lemmas, together with the Leibniz derivation rule and the fact that $A(\R^d)$ is a Banach algebra \eqref{eq:A_fourier_algebra}, imply that, for every $\varphi \in \cD(B(0,1))$, there is a constant $C$ such that $D^\alpha\frac{\varphi}{\hat g_n}$ belongs to $A(\R^d)$ with norm less than $C$ for all $n \geq n_0$ and $\alpha\in \N^d$, $|\alpha|<p+1$. Therefore, for every such $n$ and $\alpha$ we have
\[ \int |x^\alpha \cF^{-1}(\frac{\varphi}{\hat g_n})| dx \leq C.\]
This implies that, if $k$ is the unique integer in the interval $[p,p+1)$, then for every $n \geq n_0$
\[ \int (1+|x|)^p \cF^{-1}(\frac{\varphi}{\hat g_n})| dx \leq \int (1+|x|)^k \cF^{-1}(\frac{\varphi}{\hat g_n})| dx \leq C'.\]
A fortiori, by \eqref{eq:Ff_small} we have
\[ |\dual{\hat f}{\varphi}| \leq \frac{C'}{n},\]
so making $n \to \infty$ we obtain $\dual{\hat f}{\varphi}=0$. This concludes the proof.
\end{proof}
We have to prove the two lemmas used above.
\begin{proof}[Proof of Lemma \ref{lem:wiener_quantitative}] Let $\rho \in D(B(0,1))$ which is equal to $1$ on the support of $\varphi$. The fact that $\frac{\rho}{\hat g}$ (and $\frac{\rho}{\hat g_n}$ for every $n \geq n_0$) belongs to $A(\R^d)$ is essentially the Wiener tauberian theorem. Indeed, the proof in \cite[Theorem 9.3]{MR1157815} shows that for every $x \in \C$ such that $\hat g(x) \neq 0$, there is $\varepsilon>0$ such that $\frac{\rho}{\hat g} \in A(\R^d)$ for every $\rho \in D(B(x,\varepsilon))$. The claimed result follows by a partition of unity argument. To obtain a bound on $\frac{\rho}{\hat g_n}$ independant from $n$, we write
\[ \frac{\varphi}{\hat g_n} = \frac{\varphi}{\hat g} \frac{1}{1-\frac{\rho}{\hat g}(\hat g - \hat g_n)}.\]
Since $\frac{\rho}{\hat g}$ belongs to $A(\R^d)$ and $\|\hat g - \hat g_n\|_{A(\R^d)} = \|g-g_n\|_{L_1(\R^d)} \to 0$, there is $n_1 \geq n_0$ such that $\frac{\rho}{\hat g}(\hat g - \hat g_n)$ has $A(\R^d)$-norm less than $\frac 1 2$ for all $n\geq n_1$. This implies that  for $n \geq n_1$
\[ \frac{\varphi}{\hat g_n} =  \sum_{k \geq 0} \frac{\varphi}{\hat g} \left( \frac{\rho}{\hat g}(\hat g - \hat g_n)\right)^{k}\]
belongs to $A(\R^d)$ with norm less than $2 \|\frac{\varphi}{\hat g}\|_{A(\R^d)}$. The lemma follows with \[C = \max(2 \|\frac{\varphi}{\hat g}\|_{A(\R^d)},\max_{n_0 \leq n <n_1} \|\frac{\varphi}{\hat g_n}\|_{A(\R^d)})).\]
\end{proof}
\begin{proof}[Proof of Lemma \ref{lem:derivative_A(Rd)}]
We have 
\[ \|D^\alpha \hat g_n\|_{A(\R^d)} = \| x^\alpha g_n\|_{L_1(\R^d)} \leq  \| x^\alpha g\|_{L_1(\R^d)}\]
because $g_n(x) = g(x) \rho(\frac x n )$ and $0 \leq \rho \leq 1$. The quantity $\| x^\alpha g\|_{L_1(\R^d)}$ is finite because $g(x)=O(|x|^{-p-d-1})$ and $|\alpha|<p+1$.
\end{proof}

We can now prove the main result on $\GL_n(\K)$-invariant subspaces of $H_n$.
\begin{proof}[Proof of Theorem \ref{thm:Gln_invariant_subspaces}]
For simplicity we write the proof for $\K=\C$. The real case is similar, see Remark \ref{rem:Wiener_real}. Let $f_0 \in H_n$ be a nonzero function such that the space spanned by the functions $f_0 \circ A$ for $A \in \GL_n(\C)$ is not dense in $H_n$. We will prove that $p$ is an even integer and that $f_0$ is a homogeneous polynomial. By the Hahn-Banach theorem, there is a nonzero linear form $\varphi$ on $H_n$ which vanishes on $f_0 \circ A$ for all $A$. By the Riesz representation theorem, there is a unique nonzero signed measure $\mu$ on $\CP^{n-1}$ such that $\varphi(f) = \int f(\frac{z}{|z|}) d\mu(\C z)$. We can assume that $\mu$ is absolutely continuous with respect to the Lebesgue measure (=the unique $\U(n)$-invariant probability measure) on $\CP^{n-1}$, with a $C^\infty$ Radon-Nykodym derivative. Indeed, if $\rho$ is a $C^\infty$ function on $\U(n)$, then the measure $\rho \ast \mu = \int (u_* \mu)  \rho(u)du$ is absolutely continuous with respect to the Lebesgue measure on $\CP^{n-1}$, has a $C^\infty$ density, and still satisfies $\int f_0 \circ A(\frac{z}{|z|}) d(h \ast \mu)(\C z)=0$ for every $A \in \GL_n(\C)$. Moreover if $\rho \geq 0$ has a support which is a small enough neighbourhoud of the identity, then $\rho \ast\mu \neq 0$.

So in particular, $\mu$ has a nonzero bounded Radon-Nykodym derivative $h$ with respect to the Lebesgue measure. By Lemma \ref{lem:lebesgue_measure} we can write
\[ \int_{\CP^{n-1}} F(\C z) d\mu(z) = \int_{\C^{n-1}} (Fh)(\C(1,z)) \frac{c}{(1+|z_1|^{2} + \dots |z_{n-1}|^2)^n} dz.\]
Taking $F(\C z) = (f_0 \circ A)(\frac{z}{|z|})$, we get $F(\C(1,z)) = \frac{f_0 \circ A(1,z)}{1+|z|^p}$ and
\begin{equation}\label{eq:g} 0 = \int_{\C^{n-1}} f_0 \circ A(1,z) g(z) dz\end{equation}
for the nonzero function $g(z) = \frac{1}{1+|z|^p}\frac{d\mu}{d\lambda}(\C (1,z)) \frac{c}{(1+|z|_2^{2})^n}$, which satisfies.
\begin{equation}\label{eq:decay_of_g} g(z) = O((1+|z|)^{-p-2n}).\end{equation}

Now if we take for $A = \begin{pmatrix} 1&0\\ b & -A'^{-1} \end{pmatrix}$ for $A' \in \GL_{n-1}(\C)$, then \eqref{eq:g} becomes
\[ 0=\int f_0(1,b-A'^{-1} z) g(z) dz = |det A'|\int (g\circ A')(z) f_0(1,b-z) dz.\]
The second equality is a change of variable. In other words, if $f \colon \C^{n-1} \to \R$ is the function $f(z) = f_0(1,z)$, then $f$ is a continuous function satisfying $f(z) = O(1+|z|^p)$ as $z \to \infty$, and such that $(g\circ A') \ast f=0$ for every $A' \in \GL_{n-1}(\C)$.  

Viewing $\C^{n-1}$ as a real vector space $\R^{d}$ with $d=2n-2$, we see that we are in the setting of Proposition \ref{prop:wiener} ( \eqref{eq:decay_of_g} indeed implies that $(g\circ A')(z) = O((1+|z|)^{-p-d-2})$). So the proposition implies that the support of $\hat f$ is contained in $\{ \xi \in \C^{n-1}, \cF(g \circ A')(\xi) \neq 0\}$. But, $g$ being nonzero, there exists $\xi \neq 0$ such that $\hat g(\xi) \neq 0$. Since $\GL_{n-1}(\C)$ acts transitively on $\C^{n-1} \setminus \{0\}$, we get that the support of $\hat f$ is contained in $\{0\}$. This implies that $f$ is a polynomial function in $z,\overline z$. So we have proved that \eqref{eq:g} implies that the function $z \mapsto f_0(1,z)$ is a polynomial function in $z,\overline z$. But since \eqref{eq:g} for $f_0$ clearly implies  \eqref{eq:g} for $f_0 \circ A$ for every $A \in \GL_n(\C)$, we get that $z \mapsto f_0 \circ A(1,z)$ is a polynomial for every $A$. This implies that $p$ is an even integer and that $f_0$ is a homogeneous polynomial, see Lemma \ref{lem:p_even_integer}.

This shows that if $p$ is not an even integer, then $\{0\}$ and $H_n$ are the only closed $\GL_n(\C)$-invariant subspaces of $H_n$, and that otherwise all other invariant closed subspaces are contained in the space of degree $p$ homogeneous polynomials. It remains to show that for every nonzero degree $p$ homogeneous polynomial, every other such polynomial belongs to the linear space spanned by its $\GL_n(\C)$ orbit. This is not difficult.
\end{proof}
\begin{lem}\label{lem:p_even_integer} Let $f_0 \in H_n$ be a nonzero function such that, for every $A \in \GL_n(\C)$, $z \in \C^{n-1} \mapsto f_0 \circ A(1,z)$ is a polynomial in $z,\overline z$. Then $p$ is an even integer and $f_0$ is a homogeneous polynomial of degree $p$.
\end{lem}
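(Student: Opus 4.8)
The plan is to reduce to the case $n=2$ (the hypothesis for $n=1$ being vacuous, so we assume $n\ge 2$), prove it there by a one--variable Fourier analysis, and bootstrap back. For $F\in H_2$ satisfying the hypothesis, set $\psi(\lambda)=F(1,\lambda)$ and $\chi(\mu)=F(\mu,1)$; taking $A$ to be the identity and the coordinate swap in $\GL_2(\C)$ shows $\psi,\chi$ are polynomials in $(\lambda,\bar\lambda)$ resp.\ $(\mu,\bar\mu)$, and since $F(s,t)=|s|^pF(1,t/s)$ for $s\neq0$ (resp.\ $=|t|^pF(s/t,1)$ for $t\neq0$), continuity shows that $F\neq0$ forces $\psi\neq0$ and $\chi\neq0$. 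The key relation is $\psi(\lambda)=|\lambda|^p\chi(1/\lambda)$ for $\lambda\neq0$. Writing $\lambda=re^{i\theta}$ and $\psi=\sum a_{jk}\lambda^j\bar\lambda^k$, $\chi=\sum b_{jk}\mu^j\bar\mu^k$, and comparing Fourier coefficients in $\theta$ gives, for every $\ell\in\Z$ and every $r>0$,
\[ \sum_{j-k=\ell} a_{jk}\,r^{\,j+k} \;=\; \sum_{j-k=-\ell} b_{jk}\,r^{\,p-j-k}. \]
Comparing the growth of the two sides of $\psi(\lambda)=|\lambda|^p\chi(1/\lambda)$ as $\lambda\to\infty$ gives $p=\deg\psi+\operatorname{ord}_0\chi\in\Z_{\geq0}$. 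Then, by linear independence of $\{r\mapsto r^x\}$: if $p$ were odd, the $r$--exponents on the left ($\equiv\ell\bmod2$) and on the right ($\equiv p-\ell\bmod2$) would lie in different residue classes for every $\ell$, forcing all $a_{jk}=0$, i.e.\ $\psi=0$, a contradiction; so $p=2q$ is even. Matching exponents then gives $b_{q-j,q-k}=a_{jk}$ whenever $a_{jk}\neq0$, and since $\chi$ is a polynomial its exponents are nonnegative, so every monomial of $\psi$ has $j,k\leq q$. Hence for $t\neq0$,
\[ F(s,t)=(t\bar t)^q\sum_{j,k\leq q}a_{jk}(s/t)^j\overline{(s/t)}^k=\sum_{j,k\leq q}a_{jk}\,s^j\bar s^k\,t^{q-j}\bar t^{q-k}, \]
a genuine polynomial, homogeneous of degree $2q$; by density and continuity it equals $F$ everywhere. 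This settles $n=2$.

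For general $n\geq2$, fix $v'\in\C^{n-1}\setminus\{0\}$ and put $F_{v'}(s,t)=f_0(s,tv')\in H_2$. Precomposing the injective linear map $(s,t)\mapsto(s,tv')$ with any $A\in\GL_2(\C)$ yields a rank--$2$ linear map $\C^2\to\C^n$; extending it to some $\tilde A\in\GL_n(\C)$ (its two columns completed to an invertible matrix) so that $\tilde A(1,\lambda,0,\dots,0)$ is the image of $(1,\lambda)$, we get $F_{v'}\circ A(1,\lambda)=(f_0\circ\tilde A)(1,(\lambda,0,\dots,0))$, a polynomial in $(\lambda,\bar\lambda)$; so $F_{v'}$ satisfies the $n=2$ hypothesis. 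By the case just treated, $p=2q$ is an even integer and $F_{v'}$ is a homogeneous polynomial of degree $2q$, so $t\mapsto g(tv'):=f_0(1,tv')=F_{v'}(1,t)$ is a polynomial in $(t,\bar t)$ of degree $\leq q$ in $t$ and $\leq q$ in $\bar t$. On the other hand $g:=f_0(1,\cdot)$ is a polynomial (hypothesis with $A=\mathrm{id}$); writing $g=\sum_{m,m'}g_{m,m'}$ as a sum of bihomogeneous parts and restricting to the lines $z=tv'$, the degree bounds on $g(tv')$ force $g_{m,m'}\equiv0$ for $m>q$ or $m'>q$. Thus every monomial $z^\alpha\bar z^\beta$ of $g$ has $|\alpha|\leq q$ and $|\beta|\leq q$.

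Finally, for $w=(w_1,w')$ with $w_1\neq0$, homogeneity gives $f_0(w)=|w_1|^pg(w'/w_1)=(w_1\bar w_1)^q g(w'/w_1)$, and writing $g=\sum c_{\alpha\beta}z^\alpha\bar z^\beta$ this equals $\sum_{\alpha,\beta}c_{\alpha\beta}\,w_1^{\,q-|\alpha|}\bar w_1^{\,q-|\beta|}(w')^\alpha\overline{(w')}^{\,\beta}$; since $|\alpha|,|\beta|\leq q$ all exponents are nonnegative, so the right-hand side is a polynomial in $(w,\bar w)$, homogeneous of degree $2q$, and it agrees with the continuous function $f_0$ on the dense set $\{w_1\neq0\}$, hence everywhere. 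Therefore $p=2q$ is an even integer and $f_0$ is a homogeneous polynomial of degree $p$. The one genuinely delicate point is the Fourier/exponent--matching step — in particular the parity observation ruling out odd $p$ together with the degree constraints $j,k\leq p/2$ it produces; the passage from general $n$ to $n=2$ is routine linear algebra but has to be set up with a little care about extending maps to $\GL_n(\C)$.
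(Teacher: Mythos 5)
Your proof is correct in substance and takes a genuinely different route from the paper's.

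The paper works directly for general $n$: from $f_0(1,z)=P(z,\bar z)$ with $\deg P\le p$, it applies the hypothesis to the one--parameter family of upper--triangular matrices $A_c=\begin{pmatrix}1&c^*\\0&1\end{pmatrix}$, getting the identity $P_c(z)=\sum_{\alpha,\beta}a_{\alpha\beta}(1+\langle z,c\rangle)^{p/2-|\alpha|}(1+\overline{\langle z,c\rangle})^{p/2-|\beta|}z^\alpha\bar z^\beta$, then expands the fractional powers $(1+t)^{p/2-|\alpha|}$ in a binomial series and deduces that $a_{\alpha\beta}\binom{p/2-|\alpha|}{n}\binom{p/2-|\beta|}{m}=0$ whenever $|\alpha|+|\beta|+n+m>p$; taking $m$ large forces $p/2-|\beta|\in\mathbf{N}$ (and likewise for $\alpha$), which is exactly the conclusion. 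You instead reduce to $n=2$, exploit the single functional equation $\psi(\lambda)=|\lambda|^p\chi(1/\lambda)$ between the two restrictions, and run a Fourier--coefficient/exponent--matching argument in one complex variable to get both that $p\in\mathbf{Z}_{\ge0}$, that $p$ is even, and that the bidegrees are bounded by $p/2$; the bootstrap back to general $n$ goes through restriction to complex lines $t\mapsto tv'$ and a bihomogeneous decomposition of $g=f_0(1,\cdot)$. The paper's route is terser and avoids the reduction, at the cost of the binomial--coefficient trick; yours is more elementary in flavor (one--variable Fourier analysis), at the cost of a two--stage structure. Two small points to fix in your write--up: (i) to conclude that $p$ is even you must exhibit at least one $v'\neq0$ with $F_{v'}\neq0$ (this follows easily from $f_0\neq0$ and homogeneity, but it should be said, since the $n=2$ case only applies to a nonzero function); for the remaining $v'$ the vanishing $F_{v'}=0$ makes the degree bounds on $g(tv')$ hold vacuously, so the rest of the bootstrap is unaffected. (ii) In the $n=2$ conclusion the displayed formula has the exponents swapped: from $F(s,t)=|s|^p\psi(t/s)$ one gets $F(s,t)=\sum_{j,k\le q}a_{jk}\,s^{q-j}\bar s^{q-k}\,t^j\bar t^k$, not $s^j\bar s^k t^{q-j}\bar t^{q-k}$; this does not affect the argument.
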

\begin{proof}
Let $P \in \C[X_1,\dots,X_{2n-2}]$ such that $f_0(1,z) = P(z,\overline z)$. Using that $f_0 \in H_n$, we have that $|P(z,\overline z)| = O((1+|z|)^p)$, and in particular $\mathrm{deg}(P) \leq p$, so we can write
\[ P(z,\overline z) =\sum_{\alpha,\beta \in \N^d, |\alpha|+|\beta|\leq p} a_{\alpha,\beta} z^\alpha \overline z^{\beta}.\]

Let $c \in \C^{n-1}$ and $A=\begin{pmatrix} 1 & c^*\\ 0 & 1\end{pmatrix}$. Similarly there is $P_c \in \C[X_1,\dots,X_{2n-2}]$ of degree $\leq p$ such that $f\circ A(1,z) = P_c(z)$. Then
\[ P_c(z) = |1+\dual{z}{c}|^p f(1,\frac{z}{1+\dual{z}{c}})  = |1+\dual{z}{c}|^p P(\frac{z}{1+\dual{z}{c}},\frac{\overline{z}}{1+\overline{\dual{z}{c}}}) .\]
We can rewrite this quantity as
\[ \sum_{\alpha,\beta} a_{\alpha,\beta} (1+\dual{z}{c})^{\frac p 2 - |\alpha|} (1+\overline{\dual{z}{c}})^{\frac p 2 - |\beta|} z^\alpha \overline z^{\beta}.\]
By expanding $(1+t)^l=\sum_{n \geq 0} \binom{l}{n} t^n$, for small $z$ the preceding sum is 
\[ \sum_{\alpha,\beta,n,m} a_{\alpha,\beta}\binom{\frac p 2 -|\alpha|}{n}\binom{\frac p 2 -|\beta|}{m} \dual{z}{c}^n \overline{\dual{z}{c}}^m z^\alpha \overline z^\beta.\]
Since $P_c$ is a polynomial of degree $\leq p$, we get that for every $N>p$, 
\[ \sum_{|\alpha|+|\beta|+n+m=N} a_{\alpha,\beta}\binom{\frac p 2 -|\alpha|}{n}\binom{\frac p 2 -|\beta|}{m} \dual{z}{c}^n \overline{\dual{z}{c}}^m z^\alpha \overline z^\beta =0.\]
Since this is valid for every $c$, we get 
\[ a_{\alpha,\beta}\binom{\frac p 2 -|\alpha|}{n}\binom{\frac p 2 -|\beta|}{m} = 0\]
for every $\alpha,\beta \in \N^d$ and $n,n \in \N$ such that $|\alpha|+|\beta|+n+m>p$.

Let $\alpha,\beta$ such that $a_{\alpha,\beta} \neq 0$ (such $\alpha,\beta$ exist by the assumption that $f_0$ is nonzero). Then taking $n=0$ and $m$ very large, we find that $\binom{\frac p 2 -|\beta|}{m} =0$, which implies that $\frac p 2 - |\beta|$ is a nonnegative integer. Similarly $\frac p 2 - |\alpha|$  is a nonnegative integer. This proves that $p$ is an even integer and 
\[ f_0(1,z) = \sum_{|\alpha|,|\beta| \leq \frac p 2} a_{\alpha,\beta} z^\alpha \overline z^{\beta}.\]
By homogeneity we get
\[ f_0(z_1,z) = \sum_{|\alpha|,|\beta| \leq \frac p 2} z_1^{\frac p 2 - |\alpha|} z^\alpha \overline z_1^{\frac p 2 - |\beta|} \overline z^{\beta}.\]
This is the lemma.
\end{proof}
\begin{lem}\label{lem:lebesgue_measure}
The Lebesgue measure $\lambda$ on $\CP^{n-1}$ is given by 
\[ \int_{\CP^{n-1}} F(\C z) d\lambda(z) = c \int_{\C^{n-1}} F(\C(1,z)) \frac{1}{(1+|z_1|^{2}+ \dots + |z_{n-1}|^{2})^n} dz\]
for some number $c>0$.
\end{lem}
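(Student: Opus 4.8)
The plan is to identify $\lambda$ with the pushforward of a Gaussian measure under projectivization, and then to change variables in an affine chart. First, since $\U(n)$ acts transitively on $\CP^{n-1}$, there is a unique $\U(n)$-invariant Borel probability measure on it, namely $\lambda$. Let $\gamma$ be the standard complex Gaussian on $\C^n$, $d\gamma(w)=\pi^{-n}e^{-(|w_0|^2+\dots+|w_{n-1}|^2)}\,dw$ with $dw$ the Lebesgue measure on $\C^n\cong\R^{2n}$; it is $\U(n)$-invariant and satisfies $\gamma(\{0\})=0$. The projection $q\colon\C^n\setminus\{0\}\to\CP^{n-1}$ being $\U(n)$-equivariant, $q_*\gamma$ is a $\U(n)$-invariant Borel probability measure, hence $q_*\gamma=\lambda$.

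Next I would compute $q_*\gamma$ in the chart $z\in\C^{n-1}\mapsto\C(1,z)$, whose complement in $\CP^{n-1}$ is $\lambda$-null. I parametrize $\{w\in\C^n: w_0\neq 0\}$ by $(t,z)\in\C^*\times\C^{n-1}$ via $w=(t,tz_1,\dots,tz_{n-1})$. This map is holomorphic, and its complex Jacobian matrix in the variables $(t,z_1,\dots,z_{n-1})$ is lower triangular with diagonal $(1,t,\dots,t)$, hence of determinant $t^{n-1}$; being holomorphic, its real Jacobian equals $|t|^{2(n-1)}$, so $dw=|t|^{2(n-1)}\,dt\,dz$. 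Since $|w_0|^2+\dots+|w_{n-1}|^2=|t|^2a(z)$ with $a(z):=1+|z_1|^2+\dots+|z_{n-1}|^2$, we obtain, for every bounded Borel $F$ on $\CP^{n-1}$,
\[ \int_{\CP^{n-1}} F\,d\lambda = \pi^{-n}\int_{\C^{n-1}} F(\C(1,z))\Bigl(\int_{\C} e^{-a(z)|t|^2}|t|^{2(n-1)}\,dt\Bigr)dz. \]

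Finally I would evaluate the inner integral in polar coordinates $t=re^{i\theta}$ followed by the substitution $u=r^2$: with $a=a(z)$,
\[ \int_{\C} e^{-a|t|^2}|t|^{2(n-1)}\,dt = 2\pi\int_0^\infty e^{-ar^2}r^{2n-1}\,dr = \pi\int_0^\infty e^{-au}u^{n-1}\,du = \frac{\pi\,(n-1)!}{a(z)^n}, \]
and substituting this back gives the asserted identity with $c=(n-1)!/\pi^{n-1}>0$. No genuine difficulty arises: the only two points requiring care are the equivariance argument identifying $\lambda$ with $q_*\gamma$ and the Jacobian computation for the holomorphic chart $w=(t,tz)$, after which a single Gamma integral finishes the computation. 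The real case (needed for the real form of Theorem~\ref{thm:Gln_invariant_subspaces}) is identical, using the real Gaussian on $\R^n$, the projection $\R^n\setminus\{0\}$ onto real projective space, and the same substitution $u=r^2$.
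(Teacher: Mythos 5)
Your proof is correct, but it takes a genuinely different route from the paper's. The paper's argument is to verify directly, by a change of variables, that the right-hand side defines a $\U(n)$-invariant finite measure on $\CP^{n-1}$, and then invoke uniqueness of the invariant probability measure; this is short to state but leaves to the reader a change-of-variables computation under the $\U(n)$-action in the affine chart, and it only produces the constant $c$ implicitly. You instead realize $\lambda$ explicitly as the pushforward $q_*\gamma$ of a Gaussian on $\C^n$ (which is obviously $\U(n)$-invariant), so that the only computation needed is the single polar-coordinate change of variables $w=(t,tz)$ with Jacobian $|t|^{2(n-1)}$ followed by a Gamma integral. Your approach buys an explicit value $c=(n-1)!/\pi^{n-1}$ and replaces the invariance verification (which would require tracking how the density and the chart transform under a generic element of $\U(n)$) by a trivial rotational invariance of the Gaussian, at the cost of introducing the auxiliary Gaussian measure. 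Both ultimately rest on the uniqueness of the $\U(n)$-invariant probability measure on $\CP^{n-1}$. Your computations are correct: the complex Jacobian of $(t,z)\mapsto(t,tz)$ is indeed $t^{n-1}$, so the real Jacobian is $|t|^{2(n-1)}$; $|w|^2=|t|^2 a(z)$; and $\int_\C e^{-a|t|^2}|t|^{2(n-1)}\,dt = \pi(n-1)!/a^n$.
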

\begin{proof} It is a change of variable to compute that the finite measure
  \[ F\in C(\CP^{n-1}) \mapsto \int_{\C^{n-1}} F(\C(1,z)) \frac{1}{(1+|z_1|^{2}+ \dots + |z_{n-1}|^{2})^n} dz\]
  is invariant by $\U(n)$.
\end{proof}
\begin{rem}\label{rem:Wiener_real}
We did not use the full strength of Proposition \ref{prop:wiener} for $\K=\C$, as we used it for a function $g$ satisfying $g(z) = O ((1+|z|)^{-p-d-2})$, which is strictly stronger that the required $g(z) = O ((1+|z|)^{-p-d-1})$. The reason for this $2$ is that the real dimension drops by $2$ between $\C^n$ and $\CP^{n-1}$. In the real case, the dimension drops by $1$, and the same proof (using all the assumptions of Proposition \ref{prop:wiener} this time) leads to the following.

The $\GL_n(\R)$-invariant closed subspace of the Banach space $H_{n,\R}$ of continuous $p$-homogeneous functions $\R^n \to \R$ are (1) $\{0\}$ and $H_{n,\R}$ if $p$ is not an even integer (2) $\{0\}$, $H_{n,\R}$ and the space of homogeneous degree $p$ polynomials if $p$ is an even integer. 

As a consequence, the conclusion of remark \ref{rem:spatial_and_regular_isometries} holds also over $\R$.
\end{rem}

\subsection*{Acknowledgements} The author thanks Jean-Christophe Mourrat for interesting discussions that lead to the proof of Proposition \ref{prop:wiener}, and Alexandros Eskenazis, Mikhail Ostrovskii and Ignacio Vergara for useful comments, suggestions and corrections. Special thanks are due to Gilles Pisier for all that, and also for encouraging the author to think about the content of Section~\ref{section:comparison}.

\bibliographystyle{plain_correctalpha} 
\bibliography{biblio} 
\end{document}